\documentclass[11pt,a4paper]{amsart}
\usepackage[margin=25mm]{geometry}
\usepackage{amsmath,amssymb,amsthm,mathtools,mathrsfs,bm,cases}
\usepackage{microtype}
\usepackage[hidelinks]{hyperref}
\usepackage{enumitem}
\usepackage{tikz}
\usetikzlibrary{arrows.meta,positioning,calc,patterns}
\definecolor{proofblue}{RGB}{35,91,145}
\definecolor{wallorange}{RGB}{187,105,24}
\setlist{itemsep=2pt,topsep=4pt,parsep=0pt}
\hypersetup{pdfauthor={Kailun Chen and Xincheng Zhang},pdftitle={Half-space polynuclear growth: a multipoint Fredholm--Pfaffian formula}}
\allowdisplaybreaks[2]
\numberwithin{equation}{section}
\newtheorem{theorem}{Theorem}[section]
\newtheorem{proposition}[theorem]{Proposition}
\newtheorem{lemma}[theorem]{Lemma}
\newtheorem{corollary}[theorem]{Corollary}
\theoremstyle{definition}
\newtheorem{definition}[theorem]{Definition}
\theoremstyle{remark}

\newcommand{\ind}{\mathbf1}
\newcommand{\Pf}{\operatorname{Pf}}

\newcommand{\diag}{\operatorname{diag}}
\newcommand{\tr}{\operatorname{tr}}

\newcommand{\HH}{\mathcal H}

\newcommand{\scD}{\mathscr D}
\newcommand{\cL}{\mathcal L}
\newcommand{\cX}{\widehat{\mathcal X}}
\newcommand{\cA}{\mathcal A}

\newcommand{\Fhat}{\widehat F}

\title{Half-space polynuclear growth:\\a multipoint Fredholm--Pfaffian formula}
\author{Kailun Chen \and Xincheng Zhang}
\date{}

\tikzset{
 png figure/.style={font=\small,>=Stealth,line cap=round,line join=round},
 png axis/.style={->,black!75,line width=.45pt},
 png profile/.style={black!85,line width=.85pt},
 png path/.style={proofblue,line width=.95pt},
 png wall/.style={wallorange,line width=1.05pt},
 png guide/.style={black!45,densely dashed,line width=.45pt},
 png endpoint/.style={proofblue,densely dashed,line width=.75pt},
 png edge/.style={wallorange,line width=1pt},
 png motion/.style={->,black!85,line width=.75pt},
 png region/.style={fill=proofblue!9,draw=none},
 png second region/.style={fill=wallorange!10,draw=none},
 png dot/.style={circle,fill=black!85,inner sep=1.5pt},
 png open/.style={circle,draw=black!85,fill=white,line width=.6pt,inner sep=1.3pt}
}

\begin{document}

\vspace{-1.2em}
\begin{abstract}
We derive a Fredholm-Pfaffian formula for the one-time multi-point transition probabilities of half-space polynuclear growth with any nonnegative boundary nucleation rate and general deterministic upper semi-continuous initial profiles. We prove the formula by showing that it satisfies an integral Kolmogorov equation with a unique bounded solution. We also discuss the formal $1:2:3$ scaling limit of the kernel and its relation to the half-space KPZ fixed point.
\end{abstract}

\maketitle

\section{Introduction}\label{sec:model}
\subsection{Overview}\label{subsec:literature}
The Kardar--Parisi--Zhang (KPZ) universality class~\cite{KPZ86} describes common large-scale fluctuation behavior in a broad family of random growth models. In one spatial dimension, the
characteristic KPZ scales are $t^{1/3}$ for height fluctuations around the deterministic limit shape and $t^{2/3}$ for spatial correlation lengths. The limiting laws also depend on the initial geometry. Exactly solvable models have been essential for identifying these laws and understanding how KPZ fluctuations arise from microscopic growth rules.

Polynuclear growth (PNG) is one of the main exactly solvable models in this class. Full-space PNG has been studied for droplet, flat, and stationary initial data. For droplet initial condition, the one-point height fluctuations converge to the GUE Tracy--Widom law~\cite{PS00}. With additional nucleation sources at the two moving droplet edges, Baik and Rains~\cite{BR00} obtained source-dependent one-point limiting laws, including GUE, GOE$^2$, $F_0$, and Gaussian limits. For one-time spatial multi-point fluctuations, Pr\"ahofer and Spohn~\cite{PS02} obtained the Airy$_2$ process from the droplet PNG, and Johansson~\cite{Joh03} proved functional convergence to this process for discrete droplet PNG. Imamura and Sasamoto~\cite{IS04} derived Fredholm determinant representations and spatial crossover kernels for discrete droplet PNG with edge sources. Johansson and
Rahman~\cite{JR21} obtained multi-time distributions and their
KPZ scaling limits for discrete droplet PNG. For deformed PNG,
Aggarwal, Borodin, and Wheeler~\cite{ABW21} proved one-point
GUE Tracy--Widom asymptotics. For flat initial conditions, the one-point height fluctuations converge to the GOE Tracy--Widom law~\cite{PS00}. Ferrari~\cite{Fer04} proved that the rescaled multilayer heights at a fixed spatial position converge to the GOE edge Pfaffian point process. Borodin, Ferrari, and Sasamoto~\cite{BFS08} obtained spatial multi-point height distributions and their Airy$_1$ limit, more generally along space-like paths. For stationary initial data, the one-point
height fluctuations along the characteristic direction converge
to the Baik--Rains distribution $F_0$~\cite{PS00}. Pr\"ahofer
and Spohn~\cite{PS04} computed the exact scaling function of
the stationary two-point space-time correlation.

A major advance beyond these special initial conditions was made
by Matetski, Quastel, and Remenik~\cite{MQR}, who obtained an
exact Fredholm determinant formula for the one-time spatial
multi-point transition probabilities of full-space PNG from
arbitrary deterministic upper semi-continuous initial profiles.
The initial profile enters the kernel through hitting operators
for continuous-time symmetric simple random walks. Their formula
provides a unified description of the finite-time transition laws,
extending exact solvability from selected initial geometries to the  general initial data.

The half-space geometry leads to different fluctuation laws
near the wall. Sasamoto and Imamura~\cite{SI04} studied
half-space droplet PNG with nucleation at the origin, in both
the discrete model and its continuous limit. At boundary
nucleation rates 0 and 1, they
derived exact one-time spatial multipoint formulas and obtained
spatial scaling limits describing symplectic-to-unitary and
orthogonal-to-unitary soft-edge transitions, respectively. These spatial limits connect the
previously known GSE and GOE Tracy--Widom laws at the boundary
to the Airy$_2$ process in the bulk.

In this paper, we extend the exact half-space transition formulas beyond droplet initial data and the special boundary nucleation rates. We derive a Fredholm--Pfaffian formula for the
one-time multi-point transition probabilities of half-space PNG at every nonnegative boundary nucleation rate and for arbitrary
deterministic upper semi-continuous initial profiles.

\subsection{The half-space PNG model}
\label{subsec:model}
\paragraph{State space and dynamics.} The half-space polynuclear growth (PNG) model is a Markov process that describes the stochastic growth of crystal surfaces in a supersaturated atmosphere. The state space of the process is
\begin{equation}\label{eqn:state-space}
 \cX=
 \left\{
 h:[0,\infty)\longrightarrow\mathbb Z\cup\{-\infty\}:
 \begin{array}{l}
 h\text{ is upper semicontinuous},\\
 \text{for every }B>0\text{ there is a finite partition}\\
 0=y_0<y_1<\cdots<y_N=B\text{ such that }h\\
 \text{is constant on each }(y_{j-1},y_j),\quad 1\le j\le N
 \end{array}
 \right\}.
\end{equation} 
We equip $\cX$ with the local Hausdorff topology of the hypographs
\[
 \operatorname{hypo}(h)
 =\{(x,u)\in[0,\infty)\times\mathbb R:u\le h(x)\}.
\]
The value $-\infty$ represents the absence of a surface. Wherever the
one-sided heights are finite, jumps may have arbitrary integer size.
For $z>0$, a point satisfying
$h(z)>\max\{h(z^-),h(z^+)\}$ is an \emph{isolated upper peak}.
At the origin, we also allow \emph{excess wall height}, meaning
$h(0)>h(0^+)$. For the step-coordinate calculations below, write
\[
 \cX_{\mathrm{fin}}
 :=\{h\in\cX:h(x)\in\mathbb Z\text{ for every }x\ge0\}.
\]

We first describe the dynamics in words. For a finite-valued profile, up steps move to the left and down steps to the right deterministically, both at speed one. A down step of size $a$ on the left and an up step of size $b$ on the right approach one another. At their collision, their common portion annihilates: if $a=b$, both steps disappear; otherwise a jump of size $b-a$ remains and moves according to its sign. On the surface of the height function, there will be random nucleation, which creates a peak one unit above the current profile. Its edges spread until they meet other steps.

\begin{figure}[t]
\centering
\begin{tikzpicture}[x=1.9cm,y=.75cm,font=\small,>=Stealth,
  line cap=round,line join=round]
\tikzset{
 cfg profile/.style={proofblue,line width=1.1pt},
 cfg motion/.style={proofblue,->,line width=.95pt},
 cfg birth/.style={wallorange,densely dashed,line width=1pt},
 cfg side/.style={proofblue,densely dashed,line width=1pt},
 cfg birth motion/.style={wallorange,->,line width=.9pt},
 cfg guide/.style={black!18,densely dotted,line width=.45pt},
 cfg closed/.style={circle,draw=proofblue,fill=proofblue,inner sep=1.3pt},
 cfg open/.style={circle,draw=proofblue,fill=white,line width=.65pt,inner sep=1.2pt}
}
\fill[proofblue!5] (0,0)--(6.65,0)--(6.65,1)--(5.8,1)
 --(5.8,4)--(3.8,4)--(3.8,1)--(2.6,1)--(2.6,3)
 --(1,3)--(1,1)--(0,1)--cycle;
\draw[black!65,->,line width=.5pt] (0,0)--(6.92,0) node[right] {$x$};
\draw[black!75,line width=.85pt] (0,0)-- (0,1);
\draw[black!75,->,line width=.85pt] (0,2)--(0,7.12) node[above] {$h$};
\foreach \yy/\lab in {1/m,2/{m+1},3/{m+2},4/{m+3},5/{m+4}}{
 \draw[cfg guide] (0,\yy)--(6.65,\yy);
 \node[left,font=\footnotesize] at (-.06,\yy) {$\lab$};
}
\draw[cfg profile] (0,1)--(1,1);
\draw[cfg side] (1,1)--(1,3);

\draw[cfg profile](1,3)--(2.6,3);
\draw[cfg side] (2.6,3)--(2.6,1);
\draw[cfg profile](2.6,1)
 --(3.8,1);
 \draw[cfg side] (3.8,1)--(3.8,4);
 \draw[cfg profile](3.8,4)--(5.8,4);
 \draw[cfg side] (5.8,4)--(5.8,1);
 \draw[cfg profile](5.8,1)--(6.65,1);

\foreach \p in {(1,3),(2.6,3),(3.8,4),(5.8,4)}{
 \node[cfg closed] at \p {};
}
\foreach \p in {(0,1),(1,1),(2.6,1),(3.8,1),(4.8,4),(5.8,1)}{
 \node[cfg open] at \p {};
}
\foreach \xx/\lab in {0/0,1/{y_1},2.6/{y_2},3.8/{y_3},4.8/z,5.8/{y_4}}{
 \draw[black!55,line width=.45pt] (\xx,0)--(\xx,-.1);
 \node[below,font=\footnotesize] at (\xx,-.1) {$\lab$};
}
\node[below,font=\footnotesize] at (0,-.62) {wall};
\node[proofblue,anchor=west] at (6.05,4.13) {$h(t,\cdot)$};

\node[proofblue,anchor=west,font=\footnotesize] at (1.09,2.43) {$+2$};
\node[proofblue,anchor=east,font=\footnotesize] at (2.51,2.43) {$-2$};
\node[proofblue,anchor=west,font=\footnotesize] at (3.89,3.15) {$+3$};
\node[proofblue,anchor=east,font=\footnotesize] at (5.71,3.15) {$-3$};
\draw[cfg motion] (.93,1.62)--(.55,1.62)
 node[midway,above,font=\footnotesize] {};
\draw[cfg motion] (2.68,1.62)--(3.05,1.62)
 node[midway,above,font=\footnotesize] {};
\draw[cfg motion] (3.72,1.62)--(3.35,1.62)
 node[midway,above,font=\footnotesize] {};
\draw[cfg motion] (5.88,2.33)--(6.28,2.33)
 node[midway,above,font=\footnotesize] {};



\fill[wallorange] (0,2) circle (1.8pt);
\draw[cfg birth] (0,1.1)--(0,1.89);
\draw[cfg birth motion] (0.1,1.5)--(0.4,1.5);

\draw[cfg guide] (4.8,0)--(4.8,3.95);
\draw[cfg birth] (4.8,4.08)--(4.8,4.89);
\fill[wallorange] (4.8,5) circle (1.8pt);
\draw[cfg birth motion] (4.9,4.5)--(5.3,4.5);
\draw[cfg birth motion] (4.7,4.5)--(4.3,4.5);

\begin{scope}[yshift=-1.25cm,font=\footnotesize]
 \draw[cfg profile] (.1,0)--(.43,0);
 \node[anchor=west] at (.5,0) {current profile};
 \draw[cfg motion] (2.04,0)--(2.43,0);
 \node[anchor=west] at (2.5,0) {deterministic motion};
 \draw[cfg birth] (4.58,0)--(4.92,0);
 \node[anchor=west] at (4.98,0) {random nucleation};
\end{scope}

\end{tikzpicture}
\caption{
A representative half-space PNG configuration and its evolution
rules, allowing jumps larger than one.}
\label{fig:halfspace-png-model}
\end{figure}
Figure~\ref{fig:halfspace-png-model} shows a representative configuration
and the evolution rules just described.

Now we state it more precisely using the graphical construction. Fix a boundary nucleation rate $c\ge0$. Let the bulk nucleations
be a Poisson point process on $(0,\infty)\times(0,\infty)$ with
intensity $2\,dt\,dx$, where $t$ denotes time and $x$ position.
Independently, let the wall nucleations be a Poisson process of
rate $c$ on $(0,\infty)\times\{0\}$.

Between nucleations the height function evolves by the deterministic shift:
\begin{equation}\label{eq:deterministic-flow}
 (\phi_s h)(x)
 =\sup_{\substack{y\ge0\\|y-x|\le s}}h(y),
 \qquad s\ge0,\quad x\ge0.
\end{equation}
This flow preserves $\cX$ and satisfies
$\phi_0h=h$ and $\phi_{s+t}=\phi_s\phi_t$.
At a nucleation at position $z\ge0$, apply the nucleation map
\begin{equation}\label{eq:creation-map}
 (C_zh)(x)=h(x)+\ind_{\{x=z\}},\qquad -\infty+1=-\infty.
\end{equation}
An interior nucleation therefore creates an isolated peak of height one
above the existing profile; its two sides subsequently spread at
speed one. A wall nucleation raises the height at zero by one,
and the resulting down step moves into the half-line.

The PNG has the finite speed of propagation: notice that the evolution on $[0,b]$ up to time $T$
depends only on the initial profile on $[0,b+T]$ and the Poisson
marks in
\[
 [0,T]\times[0,b+T].
\]
There are almost surely finitely many such marks. They define a
time-homogeneous Markov process on $\cX$, denoted by
\[
 h(t;h_0),\qquad h(t,x;h_0)=h(t;h_0)(x).
\]
We write $\mathbb P_{h_0}^c$ and $\mathbb E_{h_0}^c$ for its law and
expectation when the initial profile is $h_0$.


We use the following step coordinates to describe the height function. For $h\in\cX_{\mathrm{fin}}$, set $m=h(0)$. let $0\leq y_1\le y_2\le \cdots$ be the ordered set of left/right discontinuities of \(h\), with the convention that a point $y$ is recorded twice if it is neither left nor right continuous. We define 
\[
\begin{split}
    d_i =h(y_i)-h(y_i^-), \quad \sigma_i = +1,\quad \text{ if $y_i$ is left discontinuous},\\
    d_i =h(y_i^+)-h(y_i), \quad \sigma_i = -1,\quad \text{ if $y_i$ is right discontinuous}.
\end{split}
\]
Then
\[
 h(x)
 =m+\sum_{d_i>0}d_i\ind_{\{y_i\le x\}}
    +\sum_{d_i<0}d_i\ind_{\{y_i<x\}}.
\]
So, the same profile can be described by its height at $0$ and a strict ordered locally finite list
\[
(m=h(0),\boldsymbol{y},\boldsymbol{d})=  (m;y_1,d_1;y_2,d_2;\ldots)\qquad
  y_i\leq y_{i+1},\qquad
 d_i\in\mathbb Z\setminus\{0\}.
\]
\(y_i = y_{i+1}\) is allowed only when \(d_i>0,d_{i+1}<0\).

\paragraph{\textbf{Markov generator.}}
A function $F(m,\boldsymbol{y},\boldsymbol{d})$ is \emph{local} if it depends only on the
restriction of the profile to $[0,b]$ for some finite $b$. Let $\mathcal D_{\mathrm{loc}}$ consists of
bounded continuous local functions that are continuously
differentiable on each open step-coordinate region and for which
the right derivative
\begin{equation}\label{eq:flow-derivative}
 \cL_{\mathrm{det}}F(h)
 :=\lim_{s\downarrow0}\frac{F(\phi_sh)-F(h)}{s}
\end{equation}
exists for every $h\in\cX$ and is bounded as a function of $h$. For \(F\in \mathcal D_{\mathrm{loc}}\), the generator on \(\cX\) is
\begin{equation}\label{eq:generator}
 \begin{split}
 \cL_cF(h)
 =-\sum_i\sigma_i\partial_{y_i}F(h)+2\int_0^\infty\bigl(F(C_zh)-F(h)\bigr)\,dz
      +c\bigl(F(C_0h)-F(h)\bigr).
 \end{split}
\end{equation}
The three terms describe deterministic step shift, bulk
nucleation, and wall nucleation, respectively.
The derivative $\partial_{y_i}$ moves the entire jump of size
$d_i$. The nucleation at \(0\) will increase \(m\) by \(1\).

\paragraph{\textbf{Continuity and annihilation conditions.}}
Continuity in the hypograph topology implies the following
matching identities at step annihilation and wall absorption.
When a down step of size $a$ meets an up step of size $b$, with
$a,b\in\mathbb N$ and $z>0$, continuity requires
\[
 \begin{split}
 \lim_{\varepsilon\downarrow0}
 F\bigl(m;\ldots,(z-\varepsilon,-a),(z+\varepsilon,b),\ldots\bigr)
 =F\bigl(m;\ldots,(z,b-a),\ldots\bigr).
 \end{split}
\]
When $b=a$, the coordinate on the right is omitted. If the first
coordinate is an up step of size $a$, its absorption at the wall gives
\[
 \lim_{\varepsilon\downarrow0}
 F\bigl(m;(\varepsilon,a),(y_2,d_2),\ldots\bigr)
 =F\bigl(m+a;(y_2,d_2),\ldots\bigr),\qquad a\in\mathbb N.
\]
These identities are necessary consequences of continuity.

\paragraph{\textbf{Multipoint height events.}}
Fix $n\ge1$, observation positions \( 0\le x_1<\cdots<x_n,\) and height values $\boldsymbol r=(r_1,\ldots,r_n)\in\mathbb Z^n$.
Set $\boldsymbol x=(x_1,\ldots,x_n)$ and define
\begin{equation}\label{eq:observation-event}
 f(h):=\prod_{j=1}^n\ind_{\{h(x_j)\le r_j\}}.
\end{equation}
The corresponding transition probability is
\begin{equation}\label{eqn:transition-prob}
       \mathbb E_{h_0}^c[f(h(t;h_0))]
 :=\mathbb P_{h_0}^c\bigl(h(t,x_j;h_0)\le r_j,\ 1\le j\le n\bigr).
\end{equation}

The height reached by the deterministic shift without nucleation at \(t,x\) is
\begin{equation}\label{eq:deterministic-envelope}
 r_0(t,x;h_0)
 :=(\phi_th_0)(x)
 =\sup_{\max\{0,x-t\}\le y\le x+t}h_0(y).
\end{equation}
Since nucleations only increase the profile,
$h(t,x;h_0)\ge r_0(t,x;h_0)$. This set an lower bound for values \(r_i\). We define \emph{the allowed set}
\[
 \cA_{\boldsymbol r}
 =\left\{(t,h)\in[0,\infty)\times\cX:
       r_0(t,x_j;h_0)\le r_j\text{ for every }j\right\}.
\]
If \((t,h_0)\notin \cA_{\boldsymbol r}, \mathbb E_{h_0}^c[f(h(t;h_0))] = 0\). Notice that \(f\) is not in the domain of the generator, since the function is not continuous in the upper-semicontinuous topology. This raised the technical difficulty that \eqref{eqn:transition-prob} does not satisfy the Kolmogorov equation in the classical sense; one need to use an integral version which we will introduce later.

\subsection{The multipoint Pfaffian formula}\label{subsec:kernel}
We now give a Fredholm--Pfaffian formula for the probabilities in \eqref{eqn:transition-prob}. We first define some operators that we will use later. All the operators act on functions on \(\mathbb{Z}\). Let
\begin{equation}\label{eq:free}
 \begin{gathered}
 \tau_+f(u):=f(u+1),\qquad \tau_-f(u):=f(u-1),\\
 \nabla:=\tfrac12(\tau_+-\tau_-),\qquad
 \Delta:=\tau_++\tau_--2I,\qquad S_{t,x}:=e^{2t\nabla+x\Delta}.
 \end{gathered}
\end{equation} We define the projection operators as follows:
\[
 \begin{gathered}
  1_b^a(u,v):=\ind_{\{b<u<a\}}\ind_{\{u=v\}},\qquad
  1^a(u,v):=\ind_{\{u<a\}}\ind_{\{u=v\}},\qquad
  1_b(u,v):=\ind_{\{u>b\}}\ind_{\{u=v\}},\\
  \overline{1}^{a}(u,v):=\ind_{\{u\le a\}}\ind_{\{u=v\}},\qquad
  \underline{1}_{b}(u,v):=\ind_{\{u\ge b\}}\ind_{\{u=v\}}.
 \end{gathered}
\]
Thus, a bar means that the endpoint is included.

We introduce the following weighted \(\ell^2\) space.
For $\eta\in\mathbb R$, define
\begin{equation}\label{eqn:weightedspace}
      \ell^2_\eta(I)
 :=
 \left\{f: I\to\mathbb C:
   \|f\|_\eta^2
   :=\sum_{u\in I}e^{2\eta u}|f(u)|^2<\infty
 \right\},
 \qquad M_\eta f(u):=e^{\eta u}f(u).
\end{equation}
When \(I=\mathbb{Z}\), it will be omitted. We have 
\[
 \|\tau_+\|_{\ell^2_\eta}=e^{-\eta},
 \qquad
 \|\tau_-\|_{\ell^2_\eta}=e^\eta.
\]

For $|c|<1$, define the boundary operator:
\[
 H=H_c:=(I-c\tau_+)^{-1}\nabla(I-c\tau_-)^{-1},
\]
whose integral kernel is 
\begin{equation}\label{eq:H-kernel}
 H_c(u,v)=
 \begin{cases}
  \tfrac12c^{v-u-1},&v>u,\\
  0,&v=u,\\
  -\tfrac12c^{u-v-1},&v<u.
 \end{cases}
\end{equation}
Since each entry is a polynomial
in $c$, \eqref{eq:H-kernel} defines its continuation to every
 $c\geq 0$. We define two distinct one-sided
inverse kernels:
\begin{equation}\label{eq:A}
 \begin{split}
 H_-^{-1}&:=2\sum_{k\ge0}\tau_-^{2k+1}
                  (I-c\tau_+)(I-c\tau_-),\\
H_+^{-1}&:=-2\sum_{k\ge0}\tau_+^{2k+1}
                  (I-c\tau_+)(I-c\tau_-).
 \end{split}
\end{equation}
Notice that 
$H_-^{-1}(u,v)$ is supported on \(u\geq v\) and $H_+^{-1}(u,v)$ is supported on \(u\leq v\). we have the following adjoint relation:
\begin{equation}\label{eq:H-transposes}
 H^*=-H,\qquad (H_-^{-1})^*=-H_+^{-1}.
\end{equation}

For $\eta>0$ with $ c e^\eta<1$, the operator $H_c$
is bounded on both $\ell^2_{-\eta}$ and $\ell^2_\eta$.
The series defining $H_-^{-1}$ converges in operator norm
on $\ell^2_{-\eta}$, whereas the series defining
$H_+^{-1}$ converges in operator norm on $\ell^2_\eta$.
On these respective spaces,
\begin{equation}\label{eq:inverse-domains}
       \begin{aligned}
 H_-^{-1}H_c=H_cH_-^{-1}=I
 &\qquad\text{on }\ell^2_{-\eta},\\
 H_+^{-1}H_c=H_cH_+^{-1}=I
 &\qquad\text{on }\ell^2_\eta.
 \end{aligned}
\end{equation}
For \(c\geq 1\), the composition is divergent, and one should interpret the formula as an analytic extension of \(0\leq c<1\).

Let $X$ be the continuous-time random walk that has \(+1\) or \(-1\) jumps at rate one, using its lower-semicontinuous
version at jump times. The generator of the process is $\Delta$. For $0\le a\le b$, define
\[
 \begin{split}
 P_{a,b}^h(u,v)
 &:=\mathbb P_{a,u}\bigl(X(y)\le h(y)\text{ for some }y\in[a,b],
                         \ X(b)=v\bigr),\\
 Q_{a,b}^h&:=e^{(b-a)\Delta}-P_{a,b}^h.
 \end{split}
\]
Here $\mathbb P_{a,u}$ is the law of the random walk \(X\) started at height $u$ and
spatial coordinate $a$. The kernel $Q$ gives the transition probabilities restricted to path staying
strictly above $h$ on the closed interval \([a,b]\). Consequently,
\[
 Q_{a,d}^h=Q_{a,b}^hQ_{b,d}^h\quad(a\le b\le d),\qquad
 P_{a,a}^h=\overline{1}^{h(a)},\qquad Q_{a,a}^h=1_{h(a)}.
\]
Let
\[
 T_{a,b}:=e^{a\Delta}P_{a,b}^he^{-b\Delta}H_-^{-1},
 \qquad T_b:=T_{0,b}.
\]
For an empty interval $a>b\ge0$, we set $P_{a,b}^h=T_{a,b}=0$.
The dependence of $T$ on $h,c$ is suppressed.

For nonzero real $p,q$, define 
\begin{equation}\label{eq:Wpq}
 W_{p,q}=
 \begin{cases}
  T_q-T_p^*+T_p^*HT_q,&p,q>0,\\
  T_{-p,q},&p<0<q,\\
  -T_{-q,p}^*,&q<0<p,\\
  0,&p,q<0.
 \end{cases}
\end{equation}
Notice that \(W_{p,q}\) has the relation:
$W_{p,q}^*=-W_{q,p}$.

\paragraph{\textbf{The multipoint kernel.}}
Now we define kernel \(K\) on space \(\HH=\ell^2(\{1,\ldots,n\}\times\mathbb Z;\mathbb C^2)\). Fix $0\le x_1<\cdots<x_n$ and $r_1,\ldots,r_n\in\mathbb Z$. For $t>0, i,j\in \{1,\ldots,n\}$, initially with $t\ne x_j$, let
\begin{equation}\label{eqn:Vdef}
       \mathcal S_j=\begin{pmatrix}
 S_{t,-x_j} & 0\\
 0 & HS_{t,x_j}
 \end{pmatrix},\qquad
 V_{ij}=\begin{pmatrix}
  W_{t-x_i,t-x_j}&W_{t-x_i,t+x_j}\\
  W_{t+x_i,t-x_j}&W_{t+x_i,t+x_j}
 \end{pmatrix}.
\end{equation}
Let \(K_{ij}=R_{ij}+\widetilde K_{ij}\), where
\begin{equation}\label{eq:skew-kernel}
 \begin{split}
 R_{ij}&=1_{r_i}\begin{pmatrix}
  \ind_{\{j<i\}}e^{(x_i-x_j)\Delta}&e^{x_i\Delta}He^{x_j\Delta}\\
  0&\ind_{\{i<j\}}e^{(x_j-x_i)\Delta}
 \end{pmatrix}1_{r_j},\\
 \widetilde K_{ij}&=1_{r_i}J_0\mathcal S_i^*V_{ij}\mathcal S_j1_{r_j},\quad J_0 = \begin{pmatrix}
      0 &1 \\
      -1 & 0
 \end{pmatrix}.
 \end{split}
\end{equation}
Notice that both $R$ and $\widetilde K$ include the projection operators. Let \(J\) be the operator with kernel \(J_{ij}(x,y) = \delta_{i=j}\delta_{x=y}J_0\). The kernel \(JK\) satisfies the relation: $(JK)^*=-JK$.

For a skew $2\times2$ matrix kernel $A$ on $\mathcal{H}$, its Fredholm-Pfaffian is defined by \cite[Section~8]{Rains}
\begin{equation}\label{eq:pf-definition}
 \Pf(J+A):=1+\sum_{m=1}^{\infty}\frac1{m!}
   \sum_{z_1,\ldots,z_m\in E}
       \Pf\bigl([A(z_a,z_b)]_{a,b=1}^m\bigr),
\end{equation}
We define $\Fhat_n(t,h):=\Pf(J+JK(t,h))$.

At $t=0$ or $t=x_j$, the value of $\Fhat_n(t,h)$ is defined by the fixed-profile
right-time limit of the complete grouped kernel, whose existence is
proved in Proposition~\ref{thm:transmission}.

\begin{theorem}\label{thm:multipoint}
For every $c\ge0$, $t\ge0$, initial profile $h\in\cX$ as defined in \eqref{eqn:state-space}, and the observation point \(0\leq x_1<\cdots<x_n\) and \(r_1,\ldots,r_n \in \mathbb{Z}\),
\begin{equation}\label{eq:multi-result}
 \mathbb P_h^c\bigl(h(t,x_j)\le r_j,\ 1\le j\le n\bigr)
 =\Fhat_n(t,h)=\Pf(J+JK(t,h)).
\end{equation}
\end{theorem}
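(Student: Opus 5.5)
We follow the strategy announced in the abstract, in the spirit of Matetski--Quastel--Remenik: show that both sides of \eqref{eq:multi-result} solve the same integral Kolmogorov equation, which has a unique bounded solution. Write $G(t,h):=\mathbb E^c_h[f(h(t;h))]$ with $f$ as in \eqref{eq:observation-event}. Since $f$ is not in $\mathcal D_{\mathrm{loc}}$, we use the Duhamel form along the deterministic flow $\phi_s$ of \eqref{eq:deterministic-flow}. Conditioning on the first nucleation in the space-time box that can influence $[0,x_n]$ up to time $t$ gives
\[
 G(t,h)=e^{-\lambda t}f(\phi_th)+\int_0^t e^{-\lambda s}\Bigl[2\int_0^{b}G(t-s,C_z\phi_sh)\,dz+c\,G(t-s,C_0\phi_sh)\Bigr]ds,
\]
where $\lambda=2b+c$ and $b=x_n+t$. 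By finite speed of propagation, nucleations outside $[0,b]$ are irrelevant. Uniqueness among bounded functions of this equation is a Gr\"onwall/Picard argument: the difference $D$ of two bounded solutions satisfies $\sup|D(t,\cdot)|\le \lambda\int_0^t\sup|D(s,\cdot)|\,ds$, hence $D=0$. So the theorem reduces to showing that $\Fhat_n$ is bounded and satisfies this integral equation.

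\textbf{Initial condition and regularity.} We first treat $t=0$ through Proposition~\ref{thm:transmission}, which gives the right-time limit. At $t=0$ one has $P^h_{0,b}$ with $b\downarrow0$, so $P\to\overline 1^{h(0)}$, and the $W$-terms degenerate. We would check that $\Pf(J+JK(0,h))=f(h)$, and more generally that $\Fhat_n(t,h)=0$ when $(t,h)\notin\cA_{\boldsymbol r}$. This should come from the projections $1_{r_j}$ interacting with the hitting kernels, which become trivial or rank-deficient exactly when $r_0(t,x_j)>r_j$. Boundedness of $\Fhat_n$ follows from Hadamard-type bounds for Pfaffians. Concretely, we conjugate by $M_{\pm\eta}$ with $ce^\eta<1$ so that $H$ and $H_\mp^{-1}$ are bounded on the weighted spaces \eqref{eq:inverse-domains}, and show that $K$ is trace class. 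For $c\ge1$ we argue by analytic continuation in $c$: both sides are analytic in $c$. The left side is analytic because it is a Poisson-mixture expansion in the number of wall nucleations, which is entire in $c$ for fixed $t$; the right side is analytic by the polynomial entries of $H_c$ and locally uniform trace-norm bounds.

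\textbf{Differential equation between nucleations.} Next we show that $s\mapsto \Fhat_n(t-s,\phi_sh)$ is absolutely continuous. Its derivative must equal the nucleation terms $2\int_0^b(\Fhat_n(\cdot,C_z\cdot)-\Fhat_n)\,dz+c(\Fhat_n(\cdot,C_0\cdot)-\Fhat_n)$. The key identities are as follows. First, $\partial_t S_{t,\pm x}=2\nabla S_{t,\pm x}$ together with $\partial_x$ acting through $\Delta$. Second, the hitting operators satisfy the flow equation $P^{\phi_sh}_{a,b}$ versus $P^h_{a-s,b+s}$ (the path-space version of the sup over $|y-x|\le s$). Third, a single nucleation at $z$ changes $P^h$ by a rank-one perturbation $P^{C_zh}_{0,b}-P^h_{0,b}=e^{z\Delta}\delta_{h(z)+1}\otimes(\cdot)$ with the walk forced to touch level $h(z)+1$ at $z$; similarly for wall nucleation with the factors $(I-c\tau_\pm)$. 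The Pfaffian of a rank-one (rank-two skew) perturbation is then computed via $\Pf(J+A+B)=\Pf(J+A)\,\Pf(I+\ldots)$, giving an explicit linear combination. Integrating the bulk term in $z$ against rate $2$ should reproduce the $2\nabla$ time derivative, as in the full-space case. The wall term reproduces the contribution from $H_c$, using $H_-^{-1}H=I$ and the relation $\nabla=(I-c\tau_+)H(I-c\tau_-)$. The case distinctions $p<0<q$ in \eqref{eq:Wpq} handle observation points with $x_j>t$, where the backward light cone does not reach the wall. The transitions $t=x_j$ are handled by continuity from Proposition~\ref{thm:transmission}.

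\textbf{Main obstacle.} The hardest step is this wall computation. One must verify that the $c$-dependent boundary terms, namely $T_p^*HT_q$ and the one-sided inverses $H_\pm^{-1}$ acting on different weighted spaces, produce exactly $c(\Fhat_n(C_0h)-\Fhat_n(h))$. One must also verify the continuity/annihilation identities at wall absorption, namely that an up step reaching $0$ is equivalent to increasing $m$. My first attempt would be to verify this for $n=1$ and flat/narrow-wedge data against the Sasamoto--Imamura formulas at $c=0,1$. I would then reduce the general case to an operator identity of the form $\partial_c$-independent commutator relations between $(I-c\tau_\pm)$ and $P^{C_0h}-P^h$.
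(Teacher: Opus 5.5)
Your outer architecture matches the paper's: an integral Kolmogorov equation along $\phi_s$, Gr\"onwall uniqueness among solutions bounded uniformly in $(t,h)$, and analytic continuation in $c$ through the Poisson mixture over wall marks. Your $t$-dependent window $b=x_n+t$ is an acceptable substitute for the paper's fixed-$B$ finite-volume process.

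The gap is that the verification that $\Fhat_n$ solves this equation, which is the real content of the theorem, is only asserted, and the heuristics you give for it are off. The nucleation increment involves no-hit kernels, $Q^h_{a,z}(\delta_{h(z)+1}\otimes\delta_{h(z)+1})Q^h_{z,b}$ (Lemma~\ref{lem:firsthit}), not $e^{z\Delta}$. Step motion does not enter through a flow identity between $P^{\phi_sh}_{a,b}$ and $P^h_{a-s,b+s}$. It enters through differentiating the plateau product, whose interior terms cancel interior boundary terms of the bulk integral (Proposition~\ref{prop:hit-generator}).

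Above all, the bulk integral does not simply reproduce the $2\nabla$ derivative: $\scD W_{p,q}\neq0$. The residual consists of endpoint errors $\mathcal E_a,\mathcal E^b$. Those produced by moving the hitting horizons have the same sign as those produced by the generator, so they add rather than cancel (Proposition~\ref{prop:folded-residual}). They vanish only after sandwiching by $1_{r_i}\mathcal S_i^*$ and $\mathcal S_j1_{r_j}$. There the factors $e^{-(t\mp x_j)\Delta}H_-^{-1}$ merge with $\mathcal S_j$ into the lower-triangular $L_-H_-^{-1}$ or $L_-$ (see \eqref{eq:triangular-right}), and $\overline1^{m}L_-1_{r}=0$ for $m\le r$. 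This needs $h\le r_j$ on the dependence intervals. The wall term you leave open cancels through the exact identity $(I+T_b^*H)(I-c\tau_-)\delta_{h(0)}=\tfrac12\,Q^h_{0,b}e^{-b\Delta}H_-^{-1}(h(0)+1,\cdot)$ and its transposed version for $(HT_b-I)^*$, which has the opposite sign. A check at $n=1$ against Sasamoto--Imamura cannot replace this.

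Because the backward equation thus holds only on the allowed set, the vanishing of $\Fhat_n$ off $\cA_{\boldsymbol r}$ is an indispensable and genuinely hard step, not a remark. It does three jobs:
\begin{itemize}
\item it makes the integral equation hold along forbidden trajectories, since nucleation keeps them forbidden;
\item it gives the initial value $0$ when $h(x_k)>r_k$;
\item it yields a bound uniform in $h$. Trace-norm bounds grow with the ceiling $h_*$, so one also needs the factorization over clusters separated by gaps exceeding $2t$, where heights are unconstrained.
\end{itemize}
``Trivial or rank-deficient'' supplies no mechanism for this. The paper (Theorem~\ref{thm:FV}) takes a witness $y$ with $\ell=h(y)>r_k$. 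Via the no-hit identity at $y$, it reduces $(I+K)F=0$, for $F$ supported on the $+$ components of observations $k,\ldots,n$, to an auxiliary system $\mathscr Bf=0$. It then shows that the high-height block of $\mathscr B$ is invertible (a block-triangular free part plus a small hitting part). Lemma~\ref{lem:FV-surplus} then shows that the dimension surplus $\ell-r_k>0$ forces a nonzero null vector. Without this and the projected cancellation above, your proposal does not show that $\Fhat_n$ solves the integral equation.

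Two smaller points. First, weights with $ce^\eta<1$ cannot give the trace-norm bounds, locally uniform on a complex disc containing points $c\ge1$, that the continuation needs. The paper uses $\eta_i>\log\max(1,C_0)$ and controls $|H_c(u,v)|\le\tfrac12e^{\omega|u-v|}$ by the projections $1_{r_j}$ and hitting-kernel tails. Second, profiles taking the value $-\infty$ still require the truncation $h\vee(-M)$.
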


In order to show that the Pfaffian is well-defined, we prove the following trace-class result in the Appendix~\ref{sec:analytic}.

\begin{proposition}\label{prop:trace-class}
      Let $M_\eta f(u)=e^{\eta u}f(u)$ and \( 0\leq c<C_0\) for some constant \(C_0\), choose
\begin{equation}\label{eq:weights}
 \omega=\log\max(1,C_0)<\eta_1<\cdots<\eta_n,\qquad
 \mathcal M_i=\diag(M_{\eta_i},M_{-\eta_i}),\qquad
 \mathcal M=\diag(\mathcal M_1,\ldots,\mathcal M_n).
\end{equation}
Define the conjugated kernel by
\begin{equation}\label{eq:weighted-kernels}
 A:=\mathcal M(JK)\mathcal M.
\end{equation}
For $h\in\cX$, \(A\) is a trace-class operator, so the Fredholm Pfaffian is well-defined.
\end{proposition}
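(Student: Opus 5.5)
The plan is to write the conjugated kernel $A=\mathcal M(JK)\mathcal M$ block by block, $A_{ij}=\mathcal M_i J_0 K_{ij}\mathcal M_j$, and to show that each $2\times 2$ matrix entry is trace class on $\ell^2(\mathbb Z)$. A finite matrix of trace-class operators is trace class on $\HH$. Once $A$ is trace class, the series \eqref{eq:pf-definition} converges absolutely. The reason is the standard Hadamard-type bound $|\Pf(B)|\le\det(B^*B)^{1/4}$ combined with $\sum_m\frac1{m!}\sum|\det|^{1/2}\lesssim e^{C\|A\|_1}$, as in \cite[Section~8]{Rains}. Moreover, $\Pf(J+JK)=\Pf(J+A)$, because conjugation by the diagonal weights $\mathcal M$ multiplies each finite Pfaffian minor by $\prod_a e^{\pm\eta u_a}$ from both sides. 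Since $\mathcal M_i$ has entries $M_{\eta_i},M_{-\eta_i}$ and $J_0$ swaps the two components, these factors cancel in pairs, up to the determinant factor, which is $1$ for the $J_0$-twisted conjugation. I will verify this cancellation explicitly at the level of the minors $[A(z_a,z_b)]$.

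For the trace-class bound I factor each entry as a product of two Hilbert--Schmidt operators. The entries contain the projections $1_{r_i}$ and $1_{r_j}$, each restricting to $u>r$. On one side, a projection $1_{r}$ composed with a weight $M_{\pm\eta}$ and a heat-type operator $S_{t,\pm x}=e^{2t\nabla\pm x\Delta}$ has an explicit kernel, a finite combination of Bessel-type functions. It decays superexponentially as $|u-v|\to\infty$, so $M_{\eta}1_r S_{t,x}M_{-\eta'}$ is Hilbert--Schmidt whenever the net exponential weight decays on $\{u>r\}$. This is where the choice of signs, and the ordering $\eta_1<\cdots<\eta_n$, enters. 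For $R_{ij}$, the off-diagonal terms $e^{(x_i-x_j)\Delta}$ with $j<i$ produce the factor $e^{(\eta_i-\eta_j)u}$. This decays correctly because of the ordering $\eta_j<\eta_i$ together with the one-sided projections; the same argument handles $\mathbf 1_{\{i<j\}}$ in the other corner. The $H$-term $e^{x_i\Delta}He^{x_j\Delta}$ is handled using that $H_c$ is bounded on $\ell^2_{\pm\eta}$ when $ce^{\eta}<1$. The condition $\eta_1>\omega=\log\max(1,C_0)$ is exactly what guarantees this. For $c\ge1$, I would use the polynomial kernel \eqref{eq:H-kernel} directly: it grows at most like $c^{|u-v|}$, and this growth is dominated by $e^{\eta|u-v|}$.

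For $\widetilde K_{ij}=1_{r_i}J_0\mathcal S_i^*V_{ij}\mathcal S_j1_{r_j}$, I split the operator as $(\mathcal M_i1_{r_i}J_0\mathcal S_i^*\mathcal M')\,(\mathcal M'^{-1}V_{ij}\mathcal M''^{-1})\,(\mathcal M''\mathcal S_j1_{r_j}\mathcal M_j)$, with intermediate weights $\mathcal M',\mathcal M''$ chosen suitably. The two outer factors are Hilbert--Schmidt by the heat-kernel bounds above. It then remains to show that the middle factor $W_{p,q}$ is bounded between the intermediate weighted spaces. For this I use $T_{a,b}=e^{a\Delta}P^h_{a,b}e^{-b\Delta}H_-^{-1}$. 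The kernel $P^h_{a,b}(u,v)$ is supported where the walk hits $h$, so for $u$ large it is bounded by a hitting probability with Gaussian-type tails. The factor $e^{-b\Delta}$ is harmless once it is paired with $e^{b\Delta}$ coming from the outer $S_{t,x}$ after regrouping: I first write $S_{t,x_j}=e^{2t\nabla}e^{x_j\Delta}$ and then absorb $e^{-b\Delta}e^{(t+x_j)\Delta}$-type products into genuine forward heat operators. The factor $H_-^{-1}$ is bounded on $\ell^2_{-\eta}$ via \eqref{eq:inverse-domains}, and $T_p^*HT_q$ is handled in the same way.

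The main obstacle I expect is exactly this regrouping. The individual factors $e^{-b\Delta}$ and $H_-^{-1}$ are unbounded, or at best bounded only on one-sided weighted spaces, so the weights have to be chosen compatibly on both sides of $H$ in $T_p^*HT_q$. Moreover, $h$ may take the value $-\infty$ on regions, or be unbounded above. I would first prove the bounds for $h$ with $\sup_{[0,t+x_n]}h<\infty$; the finite speed of propagation means only $h$ on $[0,t+x_n]$ enters. I would then use that $P^h$ is dominated by $P^{\bar h}$ for a constant profile $\bar h$. For that constant profile the hitting kernel is explicit by the reflection principle, and this gives uniform weighted estimates.
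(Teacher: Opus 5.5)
The Hilbert--Schmidt $\cdot$ bounded $\cdot$ Hilbert--Schmidt shape of your argument is not the problem. The gap is that you leave the kernel as $\mathcal S_i^*V_{ij}\mathcal S_j$: $H$ sits inside the outer factors $\mathcal S_j=\diag(S_{t,-x_j},HS_{t,x_j})$, and $H_-^{-1}$ (inside $T_q$) sits in the middle. This arrangement fails in the range the statement covers. $C_0$ may exceed $1$, and the analytic continuation in $c$ needs a trace-norm bound uniform on $|c|\le C_0$.

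For $c=1$ one has $H(v,w)=\tfrac12\operatorname{sgn}(w-v)$, and the column sums of $S_{t,x}$ equal $1$. Hence $(HS_{t,x_j})(v,v')\to\mp\tfrac12$ as $v\to\pm\infty$, and for $c>1$ it grows like $c^{|v|}$. So $\mathcal M''HS_{t,x_j}1_{r_j}M_{-\eta_j}$ is not Hilbert--Schmidt for any exponential weight $\mathcal M''$.

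A weight decaying in both directions does not help either. The middle factor would then have to absorb growth as $v\to-\infty$, and $T_q(z,v)$ does not decay there. Indeed $H_-^{-1}$ is lower triangular with non-decaying entries $2(1+c^2)$ and $-4c$, and $T_q(z,\cdot)=H_-^{-1}(z,\cdot)$ whenever $z\le h(0)$. For $c\ge1$ the composition $T_qHS_{t,x_j}$ is not even absolutely convergent, which is why the kernel must be read in regrouped form there. For $c<1$, your scheme can at best run with exponential intermediate weights of size below $\log(1/c)$, and it degenerates as $c\uparrow1$. You name "choosing weights compatibly on both sides of $H$" as the main obstacle but leave it open, and this is exactly where the argument breaks.

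The missing idea is the exact cancellation \eqref{eq:triangular-right}: $e^{-(t-x_j)\Delta}H_-^{-1}S_{t,-x_j}=L_-H_-^{-1}$ and $e^{-(t+x_j)\Delta}H_-^{-1}HS_{t,x_j}=L_-$. It eliminates $H_-^{-1}H$ and rewrites every entry of $J\widetilde K$, up to adjoints, through $G_{j,-}=P^h_{0,t-x_j}L_-H_-^{-1}$, $G_{j,+}=P^h_{0,t+x_j}L_-$, and $L_+P^h_{a,b}L_-$ or $L_+P^h_{a,b}L_-H_-^{-1}$, as in \eqref{eq:FG-terms}--\eqref{eq:positive-grouped}.

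Since $L_-$ and $L_-H_-^{-1}$ are lower triangular with bounded entries and $v>r_j$, you get $|G(z,v)|\le C\sum_{w\ge v}P^h(z,w)$. The cumulative hitting bound \eqref{eq:hit-cumulative} then gives decay $e^{-\lambda|z|-\lambda v}$ in the internal variable in \emph{both} directions. The direction $z\to-\infty$ comes from triangularity plus the projection, not from any weight.

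Every surviving $H$ then appears in one of two ways. It occurs as $G^*HG$, which converges absolutely once $\lambda>\omega$. Or it occurs as an outer $HS_{t,x}$, whose growth $e^{\omega u}$ is beaten by the weight $e^{-\eta_iu}$ on $u>r_i$. The resulting entrywise bounds \eqref{eq:scat-tail} give trace class directly via \eqref{eq:trace-by-entries}, so no Hilbert--Schmidt factorization is needed. Regrouping $e^{-b\Delta}$ alone misses the point: $\Delta=\tau_++\tau_--2I$ is bounded on every $\ell^2_\eta$, so $e^{-b\Delta}$ was never an obstruction.

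Some smaller slips:
\begin{itemize}
\item The condition $\eta_1>\omega$ gives $ce^{-\eta_i}<1$, not $ce^{\eta}<1$. Since the $\eta_i$ may be large, $H_c$ is generally not bounded on $\ell^2_{\eta_i}$ even for $c<1$. The entrywise growth bound $|H_c(u,v)|\le\tfrac12e^{\omega|u-v|}$ against decaying weights on half-lines, which you reserve for $c\ge1$, is what works for every $c$.
\item $A_{ij}=\mathcal M_iJ_0K_{ij}\mathcal M_j$ contains a second $J_0$, and $J_0^2=-I$. So the $W$-part of $A_{ij}$ is $-\mathcal M_i1_{r_i}\mathcal S_i^*V_{ij}\mathcal S_j1_{r_j}\mathcal M_j$, not $\mathcal M_i1_{r_i}J_0\mathcal S_i^*\cdots$. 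This is what puts the decaying weight $e^{-\eta_iu}$, rather than $e^{+\eta_iu}$, on $(HS_{t,x_i})^*$.
\item In the $R$-term the factor is $e^{-(\eta_i-\eta_j)u}$; as you wrote it, it would grow.
\item The walk's tails are Poisson-type, not Gaussian.
\end{itemize}
Your remarks on the weights leaving the Pfaffian series unchanged, and on reducing to a constant ceiling by monotonicity, are fine.
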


\paragraph{\textbf{A different presentation of the kernel.}} We present another way of writing the kernel. Define
\begin{equation}\label{eq:Lpm}
 L_+=e^{t(-2\nabla-\Delta)}=e^{2t(I-\tau_+)},\qquad
 L_-=e^{t(2\nabla-\Delta)}=e^{2t(I-\tau_-)},\qquad
 L_+^*=L_-.
\end{equation}The important observation is that the sign in the \(S_{t,\pm x}\) factor, matches the sign of \(x\) in \(e^{-b\Delta}\) factor comes from \(T_{a,b}\). Thus, we have the following cancellation:
\begin{equation}\label{eq:triangular-right}
 \begin{split}
 e^{-(t-x_j)\Delta}H_-^{-1}S_{t,-x_j}
     &=e^{t(2\nabla-\Delta)}H_-^{-1}=L_-H_-^{-1},\\
 e^{-(t+x_j)\Delta}H_-^{-1}HS_{t,x_j}
     &=e^{t(2\nabla-\Delta)}=L_-.
 \end{split}
\end{equation}
For \(S_{-t,\pm x}\), the same cancellation is true as the adjoint of \eqref{eq:triangular-right}.
We write out the kernel after the cancellations in different regime. Let 
\begin{equation}\label{eq:G-exact}
 G_{j,-}:=P_{0,t-x_j}^hL_-H_-^{-1}=T_{t-x_j}S_{t,-x_j},(t>x_j)\qquad
 G_{j,+}:=P_{0,t+x_j}^hL_-=T_{t+x_j}HS_{t,x_j}.
\end{equation}
When \(t-x_i>0\) for all \(i=1,\ldots,n\),
\begin{equation}\label{eq:FG-terms}
\begin{aligned}
 S_{-t,-x_i}W_{t-x_i,t-x_j}S_{t,-x_j}
 &=S_{-t,-x_i}G_{j,-}-G_{i,-}^*S_{t,-x_j}+G_{i,-}^*HG_{j,-},\\
 S_{-t,-x_i}W_{t-x_i,t+x_j}HS_{t,x_j}
 &=S_{-t,-x_i}G_{j,+}-G_{i,-}^*HS_{t,x_j}+G_{i,-}^*HG_{j,+},\\
 -HS_{-t,x_i}W_{t+x_i,t+x_j}HS_{t,x_j}
 &=-HS_{-t,x_i}G_{j,+}-G_{i,+}^*HS_{t,x_j}+G_{i,+}^*HG_{j,+}.
\end{aligned}
\end{equation}
When \(x_i-t>0,t-x_j>0\), $W_{t-x_i,t\pm x_j} = T_{x_i-t,t\pm x_j}$.
\begin{equation}\label{eq:positive-grouped}
\begin{aligned}
 S_{-t,-x_i}T_{x_i-t,t-x_j}S_{t,-x_j}&=L_+P_{x_i-t,t-x_j}^hL_-H_-^{-1},\\
 S_{-t,-x_i}T_{x_i-t,t+x_j}HS_{t,x_j}&=L_+P_{x_i-t,t+x_j}^hL_-.
\end{aligned}
\end{equation}

The advantage of the first form in \eqref{eqn:Vdef} and \eqref{eq:skew-kernel} is that it is more symmetric, and is similar to the full-space formula. The advantage of the form \eqref{eq:FG-terms} and \eqref{eq:positive-grouped} is that \(L_+\) is upper triangular and \(L_-\) is lower triangular, which is a key fact we will use in the following proof. 

\paragraph{\textbf{Convergence of kernel under $1:2:3$ scaling.}}

We explain the kernel limit under the $1{:}2{:}3$ scaling, leaving
the uniform estimates needed for convergence of the Fredholm
Pfaffian for future work. Set
\begin{equation}\label{eq:PNG-fixed-point-scaling}
 t^\varepsilon=\varepsilon^{-3/2}t,\qquad
 x^\varepsilon=\varepsilon^{-1}x,\qquad
 c^\varepsilon=1-\varepsilon^{1/2}\rho,\qquad
 \mathfrak h^\varepsilon(t,x)
 =\varepsilon^{1/2}
   \bigl[h(t^\varepsilon,x^\varepsilon;h_0^\varepsilon)
         -2t^\varepsilon\bigr].
\end{equation}
Here the PNG process has boundary rate $c^\varepsilon$, and we
assume that
$\varepsilon^{1/2}h_0^\varepsilon(\varepsilon^{-1}\,\cdot)
 \to\mathfrak h_0$
locally in the upper-semicontinuous topology, with a common linear
upper bound.

We recall the operators entering the half-space fixed-point
formula of~\cite{Zhang24}. Let $\partial_u \to \mathbf D$ and define
\begin{equation}\label{eq:PNG-limit-boundary}
 \mathbf H(u,v)
 =\frac12\operatorname{sgn}(v-u)e^{-\rho|u-v|},
 \qquad
 \mathbf H^{-1}
 =\mathbf D^{-1}(\rho-\mathbf D)(\rho+\mathbf D),
 \qquad
 \mathbf D^{-1}(u,v)=\ind_{\{u>v\}}.
\end{equation}
The free and boundary-modified Airy operators are
\begin{equation}\label{eq:PNG-limit-Airy}
 \begin{aligned}
 \mathbf S_{t,x}=e^{t\mathbf D^3/3+x\mathbf D^2},\qquad
 \mathbf S_{\mathbf H;t,x}=\mathbf H\mathbf S_{t,x}.
 \end{aligned}
\end{equation}

Let $B$ be Brownian motion with generator $\mathbf D^2$, and let
\[
 P_{a,b}^{\mathrm{hit}(\mathfrak h_0)}(u,v)
 =\mathbb P_{a,u}\bigl(B(y)\le \mathfrak h_0(y)\text{ for some }y\in[a,b],
                         \ B(b)=v\bigr).
\]

The limit of the PNG kernel in \eqref{eq:skew-kernel} is visible term by term.
Under height rescaling, the shift operators $\tau_\pm \longrightarrow e^{\pm\varepsilon^{1/2}\mathbf D}$, so
\[
 \nabla \longrightarrow \varepsilon^{1/2}\mathbf D
          +\frac{\varepsilon^{3/2}}{6}\mathbf D^3
          +O(\varepsilon^{5/2}),
 \qquad
 \Delta \longrightarrow \varepsilon\mathbf D^2+O(\varepsilon^2).
\]
Consequently, after removing the translation by
$2t^\varepsilon$,
\[
 2t^\varepsilon\nabla
       -2t^\varepsilon\varepsilon^{1/2}\mathbf D
       \pm x_j^\varepsilon\Delta
 \longrightarrow
 \frac{t}{3}\mathbf D^3\pm x_j\mathbf D^2.
\]
This gives the Airy operators. Likewise,
\[
 \varepsilon^{-1/2}(I-c^\varepsilon\tau_\pm)
       \longrightarrow \rho\mp\mathbf D,
 \qquad
 \varepsilon^{1/2}H_{c^\varepsilon}\longrightarrow\mathbf H,
 \qquad
 \varepsilon^{-1/2}H_-^{-1}\longrightarrow\mathbf H^{-1}.
\]

For the hitting terms, the rescaled walk
$\varepsilon^{1/2}X(\varepsilon^{-1}y)$ converges to $B(y)$.
Importantly, the two different PNG hitting horizons both tend
to infinity:
\begin{equation}\label{eq:PNG-hitting-horizons}
 \varepsilon(t^\varepsilon\pm x_j^\varepsilon)
 =\varepsilon^{-1/2}t\pm x_j\longrightarrow\infty.
\end{equation}
So,
\[
T_{0,x\pm x_j} \longrightarrow \lim_{L\to \infty}P_{0,L}^{\mathrm{hit} (\mathfrak h_0)}e^{-L\mathbf D^2}\mathbf{H}^{-1}.
\]
The identification with the half-space KPZ fixed point kernel in \cite{Zhang24} can be made easily.

\subsection{Proof strategy and main contributions}\label{subsec:strategy}
The formula is guessed based on \cite{MQR,SI04,Zhang24}. The first paper has the full-space PNG hitting kernel, which contains the initial condition; the second paper gives the multipoint distribution for half-space PNG starting from narrow wedge initial condition, for rate \(c=0,1\); the third paper gives the multipoint distribution formula for the half-space TASEP, which contains the half-space reflection mechanism for the hitting kernel. In our formula, the initial
profile enters through the hitting kernels $P_{a,b}^h$, the boundary
rate through $H_c$, and the wall geometry through the kernels $W_{p,q}$. We establish the formula by verifying an integral version of Kolmogorov equation. This verification identifies the proposed Pfaffian
with the transition probability.

Although the formulas share similarity, it is \emph{not} directly the formula one get for the most natural choice. In particular, the reflection cancel mechanism is not exactly the same as in \cite{Zhang24}, and the form of \(W_{p,q}\) in \eqref{eq:Wpq} operator is particular to the half-space PNG, and considered one of the main contribution of the paper.

A second contribution is that the paper develop some different tools for PNG, which solve the same technical difficulties appeared in \cite{MQR}. First, due to the fact that indicator is not in the domain of the generator, one cannot prove the formula satisfies the classical Kolmogorov equation. \cite{MQR} introduce a weak version of the Kolmogorov equation with test functions. In this paper, we used an integral version of the Kolmogorov equation. See Proposition~\ref{prop:mild-unique}. Second, for \((t,h)\notin \mathcal{A}_{\boldsymbol{r}}\), left-hand side of \eqref{eq:multi-result} is \(0\), and one need to show the Pfaffian formula is \(0\). In \cite{MQR}, they introduce the explicit cut-off function to avoid the problem. In this paper, we show that directly.

\subsection{Outline of the paper}
In Section~\ref{sec:regular}, we prove the classical Kolmogorov equation at places where the kernel is differentiable. In Section~\ref{sec:one-FV}, we establishes the
support, the initial condition, and a uniform bound of our Pfaffian formula. In Section~\ref{sec:weak}, we prove that our formula satisfies the integral version of the Kolmogorov equation, which uniquely identifies our formula as the transition probability.
Appendix~\ref{sec:analytic} provides the trace norm bounds of the kernel, which make our Fredholm Pfaffian well-defined.
\subsection{AI Claim}
The project began in early 2025. The authors conjectured the formula
and verified the classical Kolmogorov equation
(Theorem~\ref{thm:classical}). We encountered difficulties in
establishing the weak formulation and proving that the Pfaffian
vanishes outside the admissible region.
Theorem~\ref{thm:FV} was proved entirely by AI.
Theorems~\ref{thm:initial} and~\ref{thm:verification-mild}
were proved with AI assistance. Almost all of the manuscript
was written by the authors.

\subsection*{Acknowledgments}
We thank the organizers of the
\href{https://lpetrov.cc/vipss2024/}{Virginia Integrable Probability
Summer School 2024}. This project began following our participation
in the school and was inspired by Jeremy Quastel's mini-course on
the KPZ fixed point from the perspective of polynuclear growth.
We also thank Jeremy Quastel for helpful discussions and explanations
of his joint work with Matetski and Remenik~\cite{MQR}.

\section{Kolmogorov backward equation at regular points}\label{sec:regular}

The main goal of the section is to prove the Kolmogorov backward equation for the Pfaffian on the regular allowed
set, stated in Theorem~\ref{thm:classical}. We will first define the regular allowed set.

\begin{definition}\label{def:regular}
For the fixed positions \(\boldsymbol x = (x_1,\ldots,x_n)\), let $\mathcal R_{\boldsymbol x}$
be the set of pairs $(t,h)$ with $t>0$ and $h\in\cX_{\mathrm{fin}}$
satisfying the following conditions on $[0,x_n+t]$.
\begin{enumerate}
      \item \(h\) has no jump at $|t-x_j|$, $t+x_j$ or \(0\);
      \item \(t\ne x_j,\qquad 2t\ne|x_i-x_j|\ (i\ne j),\qquad
 2t\ne x_i+x_j\quad(1\le i,j\le n)\).
 \item \(h\) has no isolated peak, i.e., \(h(z)=\max\{h(z^+),h(z^-)\}\).
\end{enumerate}
\end{definition}
The first condition makes hitting probability \(P_{a,b}^h\) differentiable at endpoints; The second condition ensures \(W_{p,q}\) remains in the same regime. The third one is simply to make the argument easier, since all the isolated peaks disappears after a small time, so they can be obtained by taking right limit. The intersection
$\mathcal R_{\boldsymbol x}\cap\cA_{\boldsymbol r}$ is the
\emph{regular allowed set}.

 Since
$\partial_t\mathcal S_j=2\nabla\mathcal S_j$ and
$\partial_t\mathcal S_i^*=-2\mathcal S_i^*\nabla$, with $\nabla=\diag(\nabla,\nabla)$ and these factors are independent of $h$,
\[
 (\partial_t-\cL_c)(\mathcal S_i^*V_{ij}\mathcal S_j)
 =\mathcal S_i^*\bigl((\partial_t-\cL_c)V_{ij}+2[V_{ij},\nabla]\bigr)
    \mathcal S_j.
\]
Define 
$\scD V_{ij}=(\partial_t-\cL_c)V_{ij}+2[V_{ij},\nabla]$. Surprisingly, \(\scD V_{ij}\neq 0\). In Section~\ref{subsec:DVcomputation}, we compute \(\scD V_{ij}\) in Proposition~\ref{prop:folded-residual} and we show in
Section~\ref{subsec:projected-backward} that it vanishes after multiplication by outer factors \(\mathcal{S}_i,\mathcal{S}_j\) and the projections.

\subsection{Rank-one and rank-two nucleation increments}\label{subsec:DVcomputation}

The nucleation terms in \eqref{eq:generator} are finite differences
under $h\mapsto C_zh$. Lemma~\ref{lem:firsthit} computes this increment
for hitting kernels. We will use the following notation: $(f\otimes g)(u,v)=f(u)g(v)$, so
$A(f\otimes g)B=(Af)\otimes(B^*g)$.
We write $\delta_k(u)=\ind_{\{u=k\}}$.

\begin{lemma}\label{lem:firsthit}
For $h\in\cX_{\mathrm{fin}}$, $0\le a\le b$, and $z\in[a,b]$,
\begin{equation}\label{eq:first-hit-change}
 P_{a,b}^{C_zh}-P_{a,b}^h
  =Q_{a,z}^h(\delta_{h(z)+1}\otimes\delta_{h(z)+1})Q_{z,b}^h.
\end{equation}
At the endpoints,
\begin{equation}\label{eq:first-hit-endpoints}
 \begin{split}
 P_{a,b}^{C_ah}-P_{a,b}^h
   &=\delta_{h(a)+1}\otimes Q_{a,b}^h(h(a)+1,\cdot),\\
 P_{a,b}^{C_bh}-P_{a,b}^h
   &=Q_{a,b}^h(\cdot,h(b)+1)\otimes\delta_{h(b)+1}.
 \end{split}
\end{equation}
\end{lemma}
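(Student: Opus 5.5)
The plan is a first-hitting decomposition of the walk path. Fix $u,v\in\mathbb Z$. The difference $P_{a,b}^{C_zh}(u,v)-P_{a,b}^h(u,v)$ is the probability that the walk started at $(a,u)$ ends at $v$ at time $b$ and hits the hypograph of $C_zh$ on $[a,b]$ but not that of $h$. Since $C_zh\ge h$ and the two functions differ only at the single point $z$, where $C_zh(z)=h(z)+1$, this event is
\[
\{X(y)>h(y)\ \forall y\in[a,b]\}\cap\{X(z)\le h(z)+1\},\qquad X(b)=v .
\]
On the first event $X(z)>h(z)$, so the second forces $X(z)=h(z)+1$. The event is therefore "stay strictly above $h$ on $[a,b]$ and sit exactly at $h(z)+1$ at time $z$."

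The next step uses the Markov property at the deterministic time $z$. Split the constraint "strictly above $h$ on $[a,b]$" into the constraints on $[a,z]$ and on $[z,b]$; both contain the point $z$, so the constraint at $z$ is counted twice, but it is the same constraint. This gives
\[
\sum_w Q_{a,z}^h(u,w)\,\ind_{\{w=h(z)+1\}}\,Q_{z,b}^h(w,v)=Q_{a,z}^h(u,h(z)+1)\,Q_{z,b}^h(h(z)+1,v),
\]
which is exactly the kernel of $Q_{a,z}^h(\delta_{h(z)+1}\otimes\delta_{h(z)+1})Q_{z,b}^h$. Here one uses that the semigroup identity $Q_{a,b}^h=Q_{a,z}^hQ_{z,b}^h$ comes from the Markov property together with the closed-interval convention. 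One also uses the lower-semicontinuous convention for $X$ at jump times: the value $X(z)$ is then a well-defined marginal, and a jump exactly at time $z$ has probability zero anyway.

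The endpoint formulas are the special cases $z=a$ and $z=b$ of \eqref{eq:first-hit-change}, using $Q_{a,a}^h=1_{h(a)}$. Indeed, $1_{h(a)}\delta_{h(a)+1}=\delta_{h(a)+1}$, so the left factor collapses to $\delta_{h(a)+1}\otimes Q_{a,b}^h(h(a)+1,\cdot)$ by the rule $A(f\otimes g)B=(Af)\otimes(B^*g)$. The case $z=b$ is symmetric.

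The only real subtlety is the bookkeeping of strict versus weak inequalities at the point $z$, together with the upper-semicontinuous structure of $h$ near $z$. One must check that no extra contribution comes from paths hitting $h$ near $z$ but not at $z$. This cannot happen, because $C_zh$ and $h$ agree off $z$, so hitting off $z$ is identical for the two profiles. Note also that if $h(z)=-\infty$ then $C_zh=h$; the formula then reads $0=0$ with the convention $\delta_{-\infty}=0$, and this case should be remarked separately. Since $h\in\cX_{\mathrm{fin}}$ is assumed, it does not arise.
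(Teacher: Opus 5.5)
Your proposal is correct and follows essentially the same route as the paper. You identify the difference as the probability of staying strictly above $h$ on $[a,b]$ while sitting exactly at $h(z)+1$ at time $z$, factor it by the Markov property at $z$, and read off the endpoint cases from $Q_{a,a}^h=1_{h(a)}$ (your extra remarks on strict versus weak inequalities and on jumps at the deterministic time $z$ are harmless refinements).
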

\begin{proof}
      Since $C_zh\ge h$, the difference
$P_{a,b}^{C_zh}(u,v)-P_{a,b}^h(u,v)$ is the probability
of hitting $C_zh$ but not $h$, and ending at $v$.
The two profiles differ only at $z$, where the height is
increased by one. So this is exactly
\[
 \mathbb{P}_{a,u}(X(y)>h(y)\text{ for all }y\in[a,b],\quad X(z)=h(z)+1,\quad X(b)=v),
\]
which is $Q_{a,z}^h(u,h(z)+1)Q_{z,b}^h(h(z)+1,v)$ by the Markov property. At an endpoint,
$Q_{z,z}^h=1_{h(z)}$ leaves $\delta_{h(z)+1}$ unchanged and gives
\eqref{eq:first-hit-endpoints}.
\end{proof}

The next proposition shows the difference of \(W_{p,q}\) after the increment is a skew rank-two
operator. For an operator \(A\) that depends on the configuration \(h\), we define 
\[\delta_zA(h):=A(C_zh)-A(h).\]

\begin{proposition}\label{prop:rank-two}
For $z\in[0,\infty)$, $p,q\neq 0$, define \(a_z:=Q_{0,z}^h\delta_{h(z)+1}\) and define
$\beta_p$ and $u_p$ by
\[
 \begin{split}
 \beta_p&=
 \begin{cases}
  \delta_{h(z)+1}Q_{z,p}^he^{-p\Delta}H_-^{-1},&p>0,\ z\le p,\\
  0,&\text{otherwise},
 \end{cases}\\
 u_p&=
 \begin{cases}
  (I+T_p^*H)Q_{0,z}^h\delta_{h(z)+1},&p>0,\\
  e^{-p\Delta}Q_{-p,z}^h\delta_{h(z)+1},&p<0,\ -p\le z,\\
  0,&p<0,\ z<-p.
 \end{cases}
 \end{split}
\]
Then,
\begin{equation}\label{eq:rank-two-W}
 \delta_zW_{p,q}=u_p\otimes\beta_q-\beta_p\otimes u_q.
\end{equation}
\end{proposition}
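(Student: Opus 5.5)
The plan is to reduce everything to the increment of the basic operators $T_{a,b}$, and then to expand $W_{p,q}$ regime by regime using \eqref{eq:Wpq}. The only $h$-dependence in $T_{a,b}=e^{a\Delta}P^h_{a,b}e^{-b\Delta}H_-^{-1}$ sits in $P^h_{a,b}$.

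\textbf{Step 1: rank one increment of $T_{a,b}$.} If $z\notin[a,b]$, the profiles $C_zh$ and $h$ agree on $[a,b]$, so $\delta_zT_{a,b}=0$. If $z\in[a,b]$, Lemma~\ref{lem:firsthit} (including the endpoint cases \eqref{eq:first-hit-endpoints}, which use $Q_{z,z}^h=1_{h(z)}$) gives
\[
 \delta_zT_{a,b}=\bigl(e^{a\Delta}Q^h_{a,z}\delta_{h(z)+1}\bigr)\otimes\beta_b ,
\]
where $\beta_b$ is read as the row vector $\delta_{h(z)+1}Q^h_{z,b}e^{-b\Delta}H_-^{-1}$. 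This uses $A(f\otimes g)B=(Af)\otimes(B^*g)$. In particular, for $a=0$ this reads $\delta_zT_q=a_z\otimes\beta_q$ for $q>0$, and both sides vanish when $z>q$, consistent with the convention $\beta_q=0$. Taking adjoints gives $\delta_zT_p^*=\beta_p\otimes a_z$ and $\delta_zT_{a,b}^*=\beta_b\otimes(e^{a\Delta}Q^h_{a,z}\delta_{h(z)+1})$. The case $z=0$ (wall nucleation) is covered by the endpoint formula with $a=0$.

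\textbf{Step 2: the case $p,q>0$.} Expanding $W_{p,q}=T_q-T_p^*+T_p^*HT_q$ gives
\[
\delta_zW_{p,q}=\delta_zT_q-\delta_zT_p^*+(\delta_zT_p^*)HT_q+T_p^*H\,\delta_zT_q+(\delta_zT_p^*)H(\delta_zT_q).
\]
The first and fourth terms combine to $(I+T_p^*H)a_z\otimes\beta_q=u_p\otimes\beta_q$. For the third term, $(\beta_p\otimes a_z)HT_q=\beta_p\otimes(T_q^*H^*a_z)=-\beta_p\otimes T_q^*Ha_z$ by \eqref{eq:H-transposes}. Together with the second term this gives $-\beta_p\otimes(I+T_q^*H)a_z=-\beta_p\otimes u_q$. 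The quadratic term equals $\langle a_z,Ha_z\rangle\,\beta_p\otimes\beta_q$, and this scalar vanishes because $H$ is skew ($H^*=-H$).

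\textbf{Step 3: the mixed and negative regimes.} For $p<0<q$, we have $W_{p,q}=T_{-p,q}$. Step 1 gives $\delta_zW_{p,q}=u_p\otimes\beta_q$ when $-p\le z\le q$. If $z<-p$, both sides vanish since $u_p=0$; if $z>q$, both sides vanish since $\beta_q=0$. In every case $\beta_p=0$, so the right side of \eqref{eq:rank-two-W} is exactly $u_p\otimes\beta_q$. The case $q<0<p$ follows by taking adjoints, since $W_{p,q}=-T^*_{-q,p}$ gives $\delta_zW_{p,q}=-\beta_p\otimes u_q$ with $\beta_q=0$. For $p,q<0$ both sides are zero.

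\textbf{Main obstacle.} The expected difficulty is analytic rather than algebraic. One must justify the products $T_p^*Ha_z$ and the pairing $\langle a_z,Ha_z\rangle$, since $H_-^{-1}$ is a formal series and $H_c$ for $c\ge1$ is only an analytic continuation. I would first work with $0\le c<1$ in the weighted spaces $\ell^2_{\pm\eta}$ of \eqref{eq:inverse-domains}. There $a_z$ is supported in $\{u>h(z')\text{ path constraint}\}$ with Gaussian-type decay from $e^{z\Delta}$, so all pairings converge. I would then extend to all $c\ge0$ by polynomial/analytic continuation of each matrix entry, as in the definition of $H_c$.
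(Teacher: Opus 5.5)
Your proposal is correct and follows the paper's proof essentially step for step. Both arguments proceed the same way: the rank-one increment of the hitting kernel from Lemma~\ref{lem:firsthit}, then the product-rule expansion of $T_q-T_p^*+T_p^*HT_q$ with the quadratic term $\langle a_z,Ha_z\rangle\,\beta_p\otimes\beta_q$ killed by skewness of $H$, then the mixed and negative regimes by support considerations and adjoints. The only differences are cosmetic: you state the increment once for a general $T_{a,b}$, and you suggest an analytic-continuation route for convergence, whereas the paper justifies the vanishing scalar directly from $\|a_z\|_1\le 1$ and the entries of $H$.
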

\begin{proof}
      We check the identities by cases. 

      \emph{When $p,q>0$.}
For $z\le p$, Lemma~\ref{lem:firsthit} gives
\[
 \delta_zT_p
 =Q_{0,z}^h(\delta_{h(z)+1}\otimes \delta_{h(z)+1})Q_{z,p}^he^{-p\Delta}H_-^{-1}
 =a_z\otimes\beta_p.
\]
For $z>p$, both sides vanish. By definition of \(W_{p,q}\) in \eqref{eq:Wpq},
\begin{equation}\label{eqn:deltaWpq}
       \delta_zW_{p,q}
  =\delta_zT_q-(\delta_zT_p)^*
       +T_p^*H\delta_zT_q+(\delta_zT_p)^*HT_q+(\delta_zT_p)^*H\delta_zT_q.\\
\end{equation}
Notice that 
\begin{equation}\label{eq:quadratic-jump-zero}
      (\delta_zT_p)^*H\delta_zT_q
 =\left(\sum_{u,v}a_z(u)H(u,v)a_z(v)\right)
                        \beta_p\otimes\beta_q=0.
\end{equation}
The scalar sum is absolutely convergent: the bound
$0\le a_z(u)\le e^{z\Delta}(u,h(z)+1)$ gives $\|a_z\|_1\le1$,
and $H$ has bounded entries. Exchanging $u,v$ negates the sum by
$H(v,u)=-H(u,v)$, so it is zero. The remaining terms in \eqref{eqn:deltaWpq} factor as
\[
 \begin{split}
 \delta_zW_{p,q}
 &=(I+T_p^*H)(a_z\otimes\beta_q)
       +(\beta_p\otimes a_z)(HT_q-I)=u_p\otimes\beta_q-\beta_p\otimes u_q,
 \end{split}
\]
because $(HT_q-I)^*a_z=-(I+T_q^*H)a_z=-u_q$.

\noindent
For $p<0<q$ and $-p\le z\le q$, 
\[
 \delta_zW_{p,q}=\delta_zT_{-p,q}
 =e^{-p\Delta}Q_{-p,z}^h(\delta_{h(z)+1}\otimes \delta_{h(z)+1})
                  Q_{z,q}^he^{-q\Delta}H_-^{-1}
 =u_p\otimes\beta_q.
\]
Since $\beta_p=0$, this is \eqref{eq:rank-two-W}. For \(z\) outside the
interval \([-p,q]\), $u_p=0$ and $\beta_p=0$. If $-p>q$, then for every $z$ either $z<-p$ or $z>q$,
so at least one of $u_p$ and $\beta_q$ vanishes.
Hence $u_p\otimes\beta_q=0$, in agreement with
$W_{p,q}=0$ for an empty interval. The endpoint identities in
Lemma~\ref{lem:firsthit} also cover $-p=z=q$.
For $q<0<p$, skew symmetry gives
$\delta_zW_{p,q}=-(\delta_zW_{q,p})^*=-\beta_p\otimes u_q$,
and $\beta_q=0$. If $p,q<0$, $W_{p,q}=\beta_p=\beta_q=0$.
For a nucleation at the origin, $a_0=Q_{0,0}^h\delta_{h(0)+1}=1_{h(0)}\delta_{h(0)+1}=\delta_{h(0)+1}$;
all negative-endpoint factors vanish, so the same formulas apply at
$z=0$.

\end{proof}

For a hitting kernel, recall that 
\begin{equation}\label{eq:hit-generator-expanded}
\begin{split}
 \cL_cP_{a,b}^h
 ={}&-\sum_{a<y_i<b}\sigma_i\partial_{y_i}P_{a,b}^h
       +2\int_a^b(P_{a,b}^{C_zh}-P_{a,b}^h)\,dz+c\ind_{\{a=0\}}(P_{0,b}^{C_0h}-P_{0,b}^h).
\end{split}
\end{equation}
For a constant function of height $m\in\mathbb Z\cup \{-\infty\}$, introduce the killed
operators
\begin{equation}\label{eq:killed-operators}
 \Delta_m=1_m\Delta1_m,\qquad \nabla_m=1_m\nabla1_m,
 \qquad Q_s^{(m)}=1_m e^{s\Delta_m}1_m\quad(s\ge0).
\end{equation}
The projections kill the walk when it reaches height $m$. Thus,
$Q_s^{(m)}$ is the the transition probabilities of the random walk over an interval of length \(s\), staying strictly above the constant function of height \(m\) throughout the interval. In particular $Q_0^{(m)}=1_m$.

Define the two \emph{endpoint errors}
\begin{equation}\label{eqn:endpointerrors}
      \mathcal E_a:=-\delta_{h(a)}\otimes Q_{a,b}^h(h(a)+1,\cdot),\quad \mathcal E^b:=-Q_{a,b}^h(\cdot,h(b)+1)\otimes\delta_{h(b)}.
\end{equation}
Now we compute how the generator acts on the hitting kernel.

\begin{proposition}\label{prop:hit-generator}
Let $0\le a<b$. Suppose \(h\in \cX_{\mathrm{fin}}\) has no jump at \(a,b\) and \(h\) has no isolated peaks, i.e. \(h(z) = \max\{h(z^-),h(z^+)\}\). For every $c\ge0$ and $a>0$,
\begin{equation}\label{eq:LP-interior}
 \cL_c P_{a,b}^h=2[P_{a,b}^h,\nabla]+\mathcal E_a+\mathcal E^b.
\end{equation}
For $a=0$,
\begin{equation}\label{eq:LP-origin}
 \cL_c P_{0,b}^h
 =2[P_{0,b}^h,\nabla]+(I-c\tau_-)\mathcal E_0+\mathcal E^b.
\end{equation}
Here $[A,B]=AB-BA$. Jump sizes need not be one.
\end{proposition}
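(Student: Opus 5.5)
The plan is to move everything onto $Q$ and then localize to the intervals on which $h$ is constant. Since $P_{a,b}^h=e^{(b-a)\Delta}-Q_{a,b}^h$, and the free factor is independent of $h$ and commutes with $\nabla$, we get $\cL_cP_{a,b}^h=-\cL_cQ_{a,b}^h$ and $[P_{a,b}^h,\nabla]=-[Q_{a,b}^h,\nabla]$. So \eqref{eq:LP-interior} is equivalent to
\[
\cL_cQ_{a,b}^h=2[Q_{a,b}^h,\nabla]-\mathcal E_a-\mathcal E^b .
\]
Let $a=z_0<z_1<\cdots<z_N=b$ be the jump points of $h$ inside $(a,b)$, and let $m_k$ be the value of $h$ on $(z_{k-1},z_k)$. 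Because $h$ has no isolated peaks, $h(z_k)=\max(m_k,m_{k+1})$. The Markov property, with the killed operators \eqref{eq:killed-operators}, then gives the factorization
\[
Q_{a,b}^h=Q^{(m_1)}_{z_1-z_0}\,Q^{(m_2)}_{z_2-z_1}\cdots Q^{(m_N)}_{z_N-z_{N-1}},
\]
where the projection $1_{h(z_k)}$ at each junction is absorbed into the product $1_{m_k}1_{m_{k+1}}$. Every term of the generator can now be computed factor by factor.

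\emph{Drift.} The step at $z_k$ moves with velocity $-\sigma_k$, which lengthens one neighbouring piece and shortens the other. Using $\partial_sQ_s^{(m)}=\Delta_mQ_s^{(m)}=Q_s^{(m)}\Delta_m$, the drift contributes at each junction the term
\[
\pm\,\cdots Q^{(m_k)}\bigl(\Delta_{m_k}-\Delta_{m_{k+1}}\bigr)Q^{(m_{k+1})}\cdots .
\]

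\emph{Bulk nucleation.} By Lemma~\ref{lem:firsthit}, the bulk nucleation term equals $-2\int_a^b Q_{a,z}^h(\delta_{h(z)+1}\otimes\delta_{h(z)+1})Q_{z,b}^h\,dz$, and it splits into integrals over the individual pieces.

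The key one-piece identity comes from differentiating $r\mapsto Q_r^{(m)}\nabla_m Q_{s-r}^{(m)}$, which gives
\[
Q_s^{(m)}\nabla_m-\nabla_mQ_s^{(m)}=\int_0^sQ_r^{(m)}[\Delta_m,\nabla_m]Q_{s-r}^{(m)}\,dr .
\]
On $\mathbb Z$ the operators $\Delta$ and $\nabla$ commute, so $[\Delta_m,\nabla_m]$ is a finite-rank boundary term at height $m+1$. I expect it to equal $\pm\tfrac12\delta_{m+1}\otimes\delta_{m+1}$, possibly after the factors $1_m$ coming from the neighbouring $Q$'s are taken into account; I will verify this by a direct $3\times3$ computation near the cutoff. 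This identity turns each piece's nucleation integral into $2[Q^{(m_k)}_{\ell_k},\nabla_{m_k}]$.

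Next, telescope the product. Write $\nabla_{m}=\nabla-(\text{rank-one corrections at the cutoff } m)$. The full commutator $[Q_{a,b}^h,\nabla]$ then decomposes into the sum of the piecewise $\nabla_{m_k}$-commutators, plus correction terms at each junction, plus correction terms at $a$ and $b$. The corrections at $a$ and $b$ involve $\nabla$ hitting the killed boundary. Since $\nabla(u,v)$ couples only $u$ and $v=u\pm1$, they produce exactly $\delta_{h(a)}\otimes Q(h(a)+1,\cdot)$ and its mirror, which are $-\mathcal E_a$ and $-\mathcal E^b$. The no-jump hypothesis at $a$ and $b$ ensures no drift term appears there.

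\emph{The main obstacle} is the junction bookkeeping. I must show that the junction corrections from $2[\cdot,\nabla]$ cancel the drift terms $\pm(\Delta_{m_k}-\Delta_{m_{k+1}})$ for jumps of arbitrary size $|m_{k+1}-m_k|$, with the correct sign $\sigma_k$. My approach is to use the fact that $1_{m_k}1_{m_{k+1}}=1_{\max}$. On the range of this projection, $\Delta_{m}$ and $2\nabla_{m}$ differ between the two cutoffs only through entries at the lower cutoff level, and those entries are then killed by the projection onto the higher level. This reduces the cancellation to a one-line check for an up step and for a down step separately.

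\emph{Wall term.} For $a=0$ there is one extra term, $c(P_{0,b}^{C_0h}-P_{0,b}^h)$. By \eqref{eq:first-hit-endpoints} this equals $c\,\delta_{h(0)+1}\otimes Q_{0,b}^h(h(0)+1,\cdot)$. Since $\tau_-\delta_{h(0)}=\delta_{h(0)+1}$, this is $-c\tau_-\mathcal E_0$, and adding it to $\mathcal E_0$ gives \eqref{eq:LP-origin}.
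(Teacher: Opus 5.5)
Your proposal is correct and follows essentially the same route as the paper: the plateau factorization of $Q$, the Duhamel identity for the bulk term via $[\Delta_m,\nabla_m]$, and the replacement of $\nabla_m$ by $\nabla$ producing rank-one corrections at $a$, $b$ and each junction. The junction terms then cancel the drift through the compressed difference $1_m(\Delta_m-\Delta_n)1_n$, and the wall term comes from Lemma~\ref{lem:firsthit}. One correction: the commutator is $[\Delta_m,\nabla_m]=-\delta_{m+1}\otimes\delta_{m+1}$, not $\pm\tfrac12\delta_{m+1}\otimes\delta_{m+1}$, because the two half-entries of $\nabla$ at the cutoff add up; this coefficient $-1$ is exactly what makes the rate-$2$ bulk integral equal to $2[Q_s^{(m)},\nabla_m]$, as you go on to claim.
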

\begin{proof}
Write $P=P_{a,b}^h$, $Q=Q_{a,b}^h$, and denote the three contributions
in \eqref{eq:hit-generator-expanded}, in their displayed order, by
$\mathsf M$ (motion), $\mathsf B$ (bulk), and $\mathsf W$ (wall).
The endpoints $a,b$ remain fixed throughout.

By our assumption on the height function \(h\) in \eqref{eqn:state-space}. There exists $a=z_0<z_1<\cdots<z_N<z_{N+1}=b$ such that \(h\) is constant $m_j$ on $(z_j,z_{j+1})$. Put
$B_j=Q_{z_{j+1}-z_j}^{(m_j)}$. The Markov property gives
\begin{equation}\label{eq:plateau-product}
 Q:=Q^{h}_{a,b}= B_0\cdots B_N,\qquad P:=P^{h}_{a,b}=e^{(b-a)\Delta}-Q^{h}_{a,b}.
\end{equation}
In the following proof, we will omit the explicit dependence on \(a,b,h\). By our assumption on \(h\), $h(z_j)=\max(m_{j-1},m_j)$, and the adjacent projections
$1_{m_{j-1}}1_{m_j}=1_{\max(m_{j-1},m_j)}$ impose the no-hit restriction.
The endpoint assumptions give $m_0=h(a)$ and $m_N=h(b)$.

We first consider the deterministic shift motion $\mathsf M$. 
At $z_j$, write $m=m_{j-1}$, $n=m_j$,
$\sigma_j=\operatorname{sgn}(n-m)$,
$Q_L=B_0\cdots B_{j-1}$, and $Q_R=B_j\cdots B_N$.
Since $\partial_sQ_s^{(m)}=\Delta_mQ_s^{(m)}=Q_s^{(m)}\Delta_m$,
\[
 \partial_{z_j}Q=Q_L(\Delta_m-\Delta_n)Q_R,
 \qquad -\sigma_j\partial_{z_j}P
       =\sigma_jQ_L(\Delta_m-\Delta_n)Q_R.
\]
The compressed difference is
\begin{equation}\label{eq:stack-generator-difference}
 1_m(\Delta_m-\Delta_n)1_n
 =\begin{cases}
 (1_m-1_n)\Delta1_n=\delta_n\otimes\delta_{n+1},&n>m,\\
 -1_m\Delta(1_n-1_m)=-\delta_{m+1}\otimes\delta_m,&n<m.
 \end{cases}
\end{equation}
Using the fact that $Q_L=Q_L1_m$ and $Q_R=1_nQ_R$, both cases give
\begin{equation}\label{eqn:deterministicshift}
       M_j:=-\sigma_j\partial_{z_j}P
 =Q_L(\delta_{m+1}\otimes\delta_m+
                      \delta_n\otimes\delta_{n+1})Q_R.
\end{equation}
The first summand vanishes when $n>m$, and the second when $n<m$.
Thus $\mathsf M=\sum_{j=1}^NM_j$. The sum is
zero when $N=0$.

Now we consider the bulk nucleation. First consider a constant interval of height $m$ and length $s$.
Set $B_+=1_m\tau_+1_m$ and $B_-=1_m\tau_-1_m$.
The relations $B_+B_-=1_m$ and
$B_-B_+=1_m-\delta_{m+1}\otimes\delta_{m+1}$ imply
\[
 [\Delta_m,\nabla_m]=-[B_+,B_-]
                  =-\delta_{m+1}\otimes\delta_{m+1}.
\]
Consequently,
\[
 \frac{d}{dz}\bigl(Q_z^{(m)}\nabla_mQ_{s-z}^{(m)}\bigr)
 =Q_z^{(m)}[\Delta_m,\nabla_m]Q_{s-z}^{(m)}
 =-Q_z^{(m)}(\delta_{m+1}\otimes\delta_{m+1})Q_{s-z}^{(m)}.
\]
Integrating from $0$ to $s$, using $Q_0^{(m)}=1_m$, gives
\begin{equation}\label{eq:plateau-duhamel}
 2\int_0^s Q_z^{(m)}(\delta_{m+1}\otimes\delta_{m+1})Q_{s-z}^{(m)}\,dz
 =-2[Q_s^{(m)},\nabla_m].
\end{equation}
By Lemma~\ref{lem:firsthit}, this is exactly the bulk nucleation contribution on
the interval. To express it using the full operator $\nabla$, retain the
two edges between heights $m$ and $m+1$:
\[
 [Q_s^{(m)},\nabla]=[Q_s^{(m)},\nabla_m]
 -\tfrac12Q_s^{(m)}(\delta_{m+1}\otimes\delta_m)
 -\tfrac12(\delta_m\otimes\delta_{m+1})Q_s^{(m)}.
\]
Substituting into \eqref{eq:plateau-duhamel} yields
\[
 \begin{split}
 2\int_0^s Q_z^{(m)}(\delta_{m+1}\otimes\delta_{m+1})Q_{s-z}^{(m)}\,dz
 ={}&-2[Q_s^{(m)},\nabla]-Q_s^{(m)}(\delta_{m+1}\otimes\delta_m)
 -(\delta_m\otimes\delta_{m+1})Q_s^{(m)}.
 \end{split}
\]
For the full interval, apply this formula on each constant subinterval and insert
the surrounding factors from \eqref{eq:plateau-product} and take the summation. The commutator
terms telescope:
\[
 -2\sum_{j=0}^N B_0\cdots B_{j-1}[B_j,\nabla]B_{j+1}\cdots B_N
 =-2[Q,\nabla]=2[P,\nabla],
\]
where empty products are identities and
$[e^{(b-a)\Delta},\nabla]=0$. The boundary terms are
\[
 \begin{aligned}
 \text{at }a:\quad&-(\delta_{m_0}\otimes\delta_{m_0+1})Q=\mathcal E_a,\\
 \text{at }b:\quad&-Q(\delta_{m_N+1}\otimes\delta_{m_N})=\mathcal E^b,\\
 \text{at }z_j:\quad&-Q_L(\delta_{m+1}\otimes\delta_m+
                       \delta_n\otimes\delta_{n+1})Q_R=-M_j.
 \end{aligned}
\]
Hence the bulk term in \eqref{eq:hit-generator-expanded} is
\[
 \mathsf B=2[P,\nabla]+\mathcal E_a+\mathcal E^b-\sum_{j=1}^NM_j.
\]
Lastly, for a nucleation at the wall, if $a>0$, creation at zero leaves the hitting interval unchanged, so
$\mathsf W=0$. If $a=0$, put $m=h(0)$ and
$q_b(v)=Q_{0,b}^h(m+1,v)$. Lemma~\ref{lem:firsthit} gives
\[
 \mathsf W=c(P_{0,b}^{C_0h}-P_{0,b}^h)
          =c\delta_{m+1}\otimes q_b=-c\tau_-\mathcal E_0,
\]
because $\mathcal E_0=-\delta_m\otimes q_b$ and
$\tau_-\delta_m=\delta_{m+1}$.

\noindent
Summing the contributions together, the internal terms cancel:
\[
 \mathsf M+\mathsf B
 =\sum_jM_j+2[P,\nabla]+\mathcal E_a+\mathcal E^b-\sum_jM_j
 =2[P,\nabla]+\mathcal E_a+\mathcal E^b.
\]
Adding $\mathsf W=0$ proves \eqref{eq:LP-interior}; adding
$\mathsf W=-c\tau_-\mathcal E_0$ when $a=0$ replaces
$\mathcal E_0$ by $(I-c\tau_-)\mathcal E_0$ and proves
\eqref{eq:LP-origin}.
\end{proof}
Now we want to investigate how \(\partial_t\) acts on \(P^h_{a,b}\). Recall from \eqref{eq:Wpq} that \(a,b\) can be \(t\pm x_i\) or \(x_i-t\).
\begin{proposition}
\label{prop:moving-endpoints}
Assume $h$ is fixed and it is constant near the positive endpoints
of $0\le a<b$. Then
\begin{equation}\label{eq:endpoint-single}
 -\partial_aP_{a,b}-\Delta P_{a,b}=-\mathcal E_a
 \quad(a>0),\qquad
 \partial_bP_{a,b}-P_{a,b}\Delta=-\mathcal E^b.
\end{equation}
Consequently, along differentiable endpoints $0<a(t)<b(t)$,
\begin{equation}\label{eq:moving-hit-chain-rule}
 \frac{d}{dt}P_{a(t),b(t)}^h
 =-a'(t)\Delta P_{a,b}^h+b'(t)P_{a,b}^h\Delta
       +a'(t)\mathcal E_a-b'(t)\mathcal E^b.
\end{equation}
If $a=0$, only the terms containing $b'$ remain.
\end{proposition}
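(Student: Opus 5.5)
The plan is to differentiate the factorization of $Q_{a,b}^h$ near each endpoint, using that $h$ is locally constant there, and then to read off the discrepancy between the full Laplacian $\Delta$ and the killed Laplacian $\Delta_m$ from \eqref{eq:killed-operators}. Throughout we write $P=P_{a,b}^h$ and $Q=Q_{a,b}^h$, so that $P=e^{(b-a)\Delta}-Q$.

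\emph{Left endpoint ($a>0$).} Suppose $h\equiv m$ on $(a-\varepsilon,a+\varepsilon)$, so in particular $m=h(a)$. For $|a'-a|<\varepsilon$, and any fixed $s$ with $a,a'<a+s<a+\varepsilon$, the Markov property (as in \eqref{eq:plateau-product}) gives
\[
Q_{a',b}=Q^{(m)}_{a+s-a'}\,Q_{a+s,b}.
\]
Since $\partial_sQ^{(m)}_s=\Delta_mQ^{(m)}_s$, this yields the two-sided derivative $\partial_aQ=-\Delta_mQ$. Together with $\partial_a e^{(b-a)\Delta}=-\Delta e^{(b-a)\Delta}$, we get
\[
-\partial_aP-\Delta P=\Delta e^{(b-a)\Delta}-\Delta_mQ-\Delta e^{(b-a)\Delta}+\Delta Q=(\Delta-\Delta_m)Q .
\]
Next use $Q=1_mQ$ and $1_m\Delta1_m=\Delta_m$ to rewrite $(\Delta-\Delta_m)Q=(I-1_m)\Delta 1_mQ$. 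The kernel of $(I-1_m)\Delta1_m$ is supported on pairs $u\le m<v$ with $|u-v|=1$, so it equals $\delta_m\otimes\delta_{m+1}$. This is the same elementary computation as in \eqref{eq:stack-generator-difference}. Hence
\[
(\Delta-\Delta_m)Q=\delta_m\otimes Q(m+1,\cdot)=-\mathcal E_a ,
\]
by \eqref{eqn:endpointerrors}.

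\emph{Right endpoint.} The argument is symmetric. With $h\equiv m=h(b)$ near $b$, we write $Q_{a,b'}=Q_{a,b-s}Q^{(m)}_{b'-b+s}$, which gives $\partial_bQ=Q\Delta_m$. Therefore
\[
\partial_bP-P\Delta=-Q\Delta_m+Q\Delta=Q\,1_m\Delta(I-1_m)=Q(\delta_{m+1}\otimes\delta_m)=-\mathcal E^b .
\]

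\emph{Chain rule.} Since both partial derivatives exist two-sidedly and are continuous in $(a,b)$ while $h$ stays constant near the endpoints, we have
\[
\frac{d}{dt}P_{a(t),b(t)}=a'\partial_aP+b'\partial_bP .
\]
Substituting $\partial_aP=-\Delta P+\mathcal E_a$ and $\partial_bP=P\Delta-\mathcal E^b$ gives \eqref{eq:moving-hit-chain-rule}. When $a=0$ the left endpoint does not move, so only the $b'$ terms survive.

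The only delicate point is the identification of the compressed difference $(I-1_m)\Delta1_m$ as a rank-one operator. One must also note that its bounded action on $Q$ justifies differentiating inside the semigroup; this is immediate here because $Q^{(m)}_s$ is norm-differentiable, as $\Delta_m$ is bounded.
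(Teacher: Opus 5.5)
Your proof is correct and follows essentially the same route as the paper. You split off a short constant stretch at each endpoint via the constrained Markov property, differentiate to get $\partial_aQ=-\Delta_mQ$ and $\partial_bQ=Q\Delta_m$, and identify $(\Delta-\Delta_m)Q$ and $Q(\Delta-\Delta_m)$ through the rank-one edges $\delta_m\otimes\delta_{m+1}$ and $\delta_{m+1}\otimes\delta_m$; the only difference is that the paper first differentiates the global plateau product \eqref{eq:plateau-product} and then notes that this local splitting suffices, whereas you use the local splitting from the start.
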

\begin{proof}
First assume the profile has the plateau representation
\eqref{eq:plateau-product}.
Write $m_b=h(b)$. Differentiation of the last factor of
\eqref{eq:plateau-product} gives
$\partial_bQ=Q\Delta_{m_b}$. Consequently
\[
 \partial_bP-P\Delta
 =Q(\Delta-\Delta_{m_b})
 =Q(\delta_{m_b+1}\otimes\delta_{m_b})=-\mathcal E^b.
\]
In the second equality, $Q=Q1_{m_b}$ removes every row of
$\Delta-\Delta_{m_b}$ except the downward edge from $m_b+1$ to
$m_b$. Similarly, for $a>0$ and $m_a=h(a)$,
$\partial_aQ=-\Delta_{m_a}Q$, and hence
\[
 -\partial_aP-\Delta P
 =(\Delta-\Delta_{m_a})Q
 =(\delta_{m_a}\otimes\delta_{m_a+1})Q=-\mathcal E_a.
\]
The same argument applies to any constant short interval next to each
endpoint: split it off using the constrained Markov property, leaving
the rest of the profile fixed. Thus internal peaks or stacks do not
affect these endpoint identities. The ordinary chain rule gives
\eqref{eq:moving-hit-chain-rule}. The intervals in $K$ move with
physical time, whereas
\eqref{eq:LP-interior}--\eqref{eq:LP-origin} hold the endpoints fixed.
\end{proof}

Recall that \(\scD V_{ij}=(\partial_t-\cL_c)V_{ij}+2[V_{ij},\nabla]\). We compute \(\scD W_{p,q}\) in the next proposition.
\begin{proposition}
\label{prop:folded-residual}
Let $0\le c<1$ and let $p,q\ne0$ vary with $p'(t)=q'(t)=1$. Suppose \(h\in \cX_{\mathrm{fin}}\) has no jump at \(a,b\) and \(h\) has no isolated peaks, i.e. \(h(z) = \max\{h(z^-),h(z^+)\}\). For $b>0$, define
\begin{equation}\label{eq:OD}
 \mathcal D_b:=\mathcal E^b e^{-b\Delta}H_-^{-1}
 =-\bigl(Q_{0,b}^h\delta_{h(b)+1}\bigr)\otimes
       \bigl(\delta_{h(b)}e^{-b\Delta}H_-^{-1}\bigr).
\end{equation} Then
\begin{equation}\label{eq:folded-residual}
 \scD W_{p,q}=
 \begin{cases}
 -2\bigl[(I+T_p^*H)\mathcal D_q+\mathcal D_p^*(HT_q-I)\bigr],
       &p,q>0,\\[2pt]
 -2e^{-p\Delta}(\mathcal E_{-p}+\mathcal E_q)e^{-q\Delta}H_-^{-1},
       &p<0<q\\[2pt]
 2\bigl[e^{-q\Delta}(\mathcal E_{-q}+\mathcal E_p)
                           e^{-p\Delta}H_-^{-1}\bigr]^*,
       &q<0<p\\[2pt]
 0,    &p,q<0.
 \end{cases}
\end{equation}
\end{proposition}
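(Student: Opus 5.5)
The plan is to treat the four sign regimes separately, combining Proposition~\ref{prop:hit-generator} (the generator with fixed endpoints) and Proposition~\ref{prop:moving-endpoints} (the motion of the endpoints). The basic building block is $T_{a,b}=e^{a\Delta}P_{a,b}^he^{-b\Delta}H_-^{-1}$ with $a=a(t)$ and $b=b(t)$. Since the outer factors do not depend on $h$, $\cL_c$ acts only on $P_{a,b}^h$. Differentiating the outer factors in $t$ produces $a'\Delta P$ on the left and $-b'P\Delta$ on the right. By \eqref{eq:moving-hit-chain-rule} these exactly cancel the $\Delta$-terms, leaving $a'\mathcal E_a-b'\mathcal E^b$. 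Because $\nabla$ commutes with $e^{s\Delta}$ and with $(I-c\tau_\pm)$, and since $\sum_k\tau_-^{2k+1}$ commutes with $\nabla$ on $\ell^2_{-\eta}$ when $0\le c<1$, we get $[T_{a,b},\nabla]=e^{a\Delta}[P_{a,b},\nabla]e^{-b\Delta}H_-^{-1}$. Substituting \eqref{eq:LP-interior} into $\scD T_{a,b}=\partial_tT_{a,b}-\cL_cT_{a,b}+2[T_{a,b},\nabla]$ then cancels the commutator and leaves
\[
\scD T_{a,b}=e^{a\Delta}\bigl(a'\mathcal E_a-b'\mathcal E^b-\mathcal E_a-\mathcal E^b\bigr)e^{-b\Delta}H_-^{-1}.
\]
For $a=x_i-t$ (so $a'=-1$) and $b'=1$, this equals $-2e^{a\Delta}(\mathcal E_a+\mathcal E^b)e^{-b\Delta}H_-^{-1}$, which is the $p<0<q$ line. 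The $q<0<p$ line follows by taking adjoints, using $W^*_{p,q}=-W_{q,p}$ together with the facts that $\scD$ commutes with $*$ up to the sign of the commutator term and that $\nabla^*=-\nabla$. When both $p,q<0$, $W_{p,q}=0$ and there is nothing to prove.

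For $a=0$ we use \eqref{eq:LP-origin} instead. The term $a'$ is absent, so
\[
\scD T_q=-(I-c\tau_-)\mathcal E_0e^{-q\Delta}H_-^{-1}-2\mathcal E^qe^{-q\Delta}H_-^{-1}=-(I-c\tau_-)\mathcal E_0e^{-q\Delta}H_-^{-1}-2\mathcal D_q.
\]
The case $p,q>0$ is the main obstacle, because $W_{p,q}=T_q-T_p^*+T_p^*HT_q$ is quadratic in $h$, and $\scD$ is not a derivation for the nucleation part of $\cL_c$. I will write $\cL_c(T_p^*HT_q)$ as the Leibniz terms plus a correction. The correction is the bilinear nucleation term $2\int(\delta_zT_p)^*H\delta_zT_q\,dz+c(\delta_0T_p)^*H\delta_0T_q$, and it vanishes by \eqref{eq:quadratic-jump-zero} in Proposition~\ref{prop:rank-two}. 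For the motion part the Leibniz rule holds directly. For the commutator part, $[T_p^*HT_q,\nabla]=[T_p^*,\nabla]HT_q+T_p^*H[T_q,\nabla]$ requires $[H,\nabla]=0$, which holds because $H$ is built from $\nabla$ and $(I-c\tau_\pm)^{-1}$. Hence
\[
\scD W_{p,q}=\scD T_q-(\scD T_p)^*+(\scD T_p)^*HT_q+T_p^*H\,\scD T_q,
\]
after checking the sign convention $\scD(A^*)=(\scD A)^*$. This holds since $(2[A,\nabla])^*=2[A^*,\nabla]$ when $\nabla^*=-\nabla$.

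Regrouping gives $(I+T_p^*H)\scD T_q+(\scD T_p)^*(HT_q-I)$. The $\mathcal D$-parts produce exactly the claimed formula. What remains is to show that the wall-error contribution
\[
-(I+T_p^*H)(I-c\tau_-)\mathcal E_0e^{-q\Delta}H_-^{-1}-\bigl[(I-c\tau_-)\mathcal E_0e^{-p\Delta}H_-^{-1}\bigr]^*(HT_q-I)
\]
vanishes. This is the delicate step. Write $\mathcal E_0=-\delta_m\otimes q_b$, where $m=h(0)$. The first step is the identity $H(I-c\tau_-)\delta_m=(I-c\tau_+)^{-1}\nabla\delta_m$. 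The second is the cancellation $e^{-q\Delta}H_-^{-1}$ against $T_q$ in $T_p^*H$, which yields $e^{-p\Delta}$-type terms. With these, both pieces should reduce to multiples of the vector $(I-c\tau_-)\delta_m\otimes(\ldots)$. It then remains to show that $(I+T_p^*H)(I-c\tau_-)\delta_m$ is controlled by row $m$ of $P_{0,p}$, where $P_{0,p}(m+1,\cdot)$ is trivial because $P_{0,p}$ hits immediately at height $\le m$. If this fails to cancel directly, my fallback is to expand $H_-^{-1}$ explicitly. Its factor $(I-c\tau_+)(I-c\tau_-)$ cancels with $H$ and with the wall factor, and one checks on the two-dimensional span of $\delta_m,\delta_{m+1}$ that the residual is skew and zero.
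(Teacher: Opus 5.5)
Your computation agrees with the paper up to the final step. The formula for $\scD T_{a,b}$, the cases $p<0<q$ and $q<0<p$ (via $\scD(A^*)=(\scD A)^*$), the vanishing of the bilinear nucleation term by \eqref{eq:quadratic-jump-zero}, and the regrouping $\scD W_{p,q}=(I+T_p^*H)\scD T_q+(\scD T_p)^*(HT_q-I)$ are all correct.

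\textbf{The gap.} The wall cancellation for $p,q>0$ is the actual content of the proposition, and you leave it as a heuristic that points the wrong way. Put $m=h(0)$, $l=(I-c\tau_-)\delta_m$ and $o_b=(Q^h_{0,b}e^{-b\Delta}H_-^{-1})(m+1,\cdot)$, so that $-(I-c\tau_-)\mathcal E_0e^{-b\Delta}H_-^{-1}=l\otimes o_b$. The two wall pieces are then $\bigl((I+T_p^*H)l\bigr)\otimes o_q$ and $o_p\otimes\bigl((HT_q-I)^*l\bigr)$. This contradicts your sketch in three ways:
\begin{itemize}
\item Neither piece vanishes on its own, and neither is a multiple of $l\otimes(\cdot)$.
\item Both have the infinitely supported factors $o_p,o_q$, so checking on the span of $\delta_m,\delta_{m+1}$ cannot detect the cancellation.
\item The row that matters is row $m+1$ of $Q^h_{0,p}$, and it is not trivial. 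What is trivial are the rows $u\le m$ of $P^h_{0,p}$, where $P^h_{0,p}(u,\cdot)=e^{p\Delta}(u,\cdot)$.
\end{itemize}

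\textbf{The missing identity.} Your first step, $Hl=(I-c\tau_+)^{-1}\nabla\delta_m$, is the right start. What you need is $(I+T_b^*H)l=\tfrac12 o_b$ for every $b>0$, with $o_b$ viewed as a vector. The steps are:
\begin{itemize}
\item Expand $Hl=\tfrac12\sum_{k\ge0}c^k(\delta_{m-1-k}-\delta_{m+1-k})$. Its only component above $m$ is $1_mHl=-\tfrac12\delta_{m+1}$.
\item Since $Q^h_{0,b}=1_mQ^h_{0,b}$, this gives $P^{h*}_{0,b}Hl=e^{b\Delta}Hl+\tfrac12 Q^h_{0,b}(m+1,\cdot)$.
\item Hence $T_b^*Hl=(H_-^{-1})^*Hl+\tfrac12 o_b$.
\item By $(H_-^{-1})^*H=-H_+^{-1}H=-I$ on $\ell^2_\eta$, we get $(H_-^{-1})^*Hl=-l$. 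This uses $ce^\eta<1$; $l$ is finitely supported and $Hl$ is supported on $(-\infty,m+1]$. So $l+(H_-^{-1})^*Hl=0$.
\end{itemize}
Then $H^*=-H$ gives $(HT_q-I)^*=-(I+T_q^*H)$. So the two wall pieces are $+\tfrac12 o_p\otimes o_q$ and $-\tfrac12 o_p\otimes o_q$, and they cancel against each other. Without this computation the case $p,q>0$ is not established.
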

\begin{proof}
1. We first focus on the case \(p,q>0\). Recall that \(W_{p,q} = T_q-T_p^*+T_p^*HT_q\). For $b>0$ with $b'=1$, let
\[
 \qquad l:=(I-c\tau_-)\delta_{h(0)},\qquad
 o_b:=Q_{0,b}^he^{-b\Delta}H_-^{-1}(h(0)+1,\cdot).
\]
Thus $\mathcal E_0=-\delta_{h(0)}\otimes Q_{0,b}^h(h(0)+1,\cdot)$. The left endpoint is fixed;
differentiating $T_b=P_{0,b}e^{-b\Delta}H_-^{-1}$ and commuting
the free factors with $\nabla$ gives
\begin{align}
 \scD T_b
 &=\bigl[\partial_bP_{0,b}-P_{0,b}\Delta
       -\cL_cP_{0,b}+2[P_{0,b},\nabla]\bigr]
                         e^{-b\Delta}H_-^{-1}\notag\\
 &=\bigl[-\mathcal E^b-(I-c\tau_-)\mathcal E_0
                         -\mathcal E^b\bigr]e^{-b\Delta}H_-^{-1}
   =l\otimes o_b-2\mathcal D_b.\label{eq:T-origin-residual}
\end{align}
The first $-\mathcal E_b$ comes from endpoint differentiation
\eqref{eq:endpoint-single}; the other two terms come from subtracting
the generator identity~\eqref{eq:LP-origin}. Notice that the two \(\mathcal{E}_b\) terms have the same sign rather than cancelling each other.

The definition of $H$ gives
\[
 Hl=(I-c\tau_+)^{-1}\nabla\delta_{h(0)}
   =\tfrac12\sum_{k\ge0}c^k
                      (\delta_{h(0)-1-k}-\delta_{h(0)+1-k}),
 \qquad 1_{h(0)}Hl=-\tfrac12\delta_{h(0)+1}.
\]
Only $-\delta_{h(0)+1}/2$ is supported above $h(0)$. Using
$Q_{0,b}=1_{h(0)}Q_{0,b}$ and $P_{0,b}=e^{b\Delta}-Q_{0,b}$ gives
\[
 P_{0,b}^*Hl=e^{b\Delta}Hl-Q_{0,b}^*1_{h(0)}Hl
            =e^{b\Delta}Hl+\tfrac12 Q_{0,b}^h(h(0)+1,\cdot).
\]
The positive-weight inverse identity in \eqref{eq:inverse-domains}
gives $(H_-^{-1})^*H=-I$. Hence
\begin{equation}\label{eq:wall-dressed-column}
 \begin{split}
 (I+T_b^*H)l
 &=l+(H_-^{-1})^*e^{-b\Delta}P_{0,b}^*Hl\\
 &=l+(H_-^{-1})^*Hl+\tfrac12(H_-^{-1})^*e^{-b\Delta}\big(Q_{0,b}^h(h(0)+1,\cdot)\big)
   =\tfrac12o_b.
 \end{split}
\end{equation}
Since $H^*=-H$, the transposed factor satisfies
\begin{equation}\label{eq:wall-dressed-row}
 (HT_b-I)^*l=-(I+T_b^*H)l=-\tfrac12o_b.
\end{equation}
Time differentiation, deterministic step motion, and the commutator
are derivations. Nucleation obeys
$\delta_z(AB)=(\delta_zA)B+A(\delta_zB)+(\delta_zA)(\delta_zB)$,
so $-\cL_c$ contributes the additional term
\[
 -2\int_0^\infty(\delta_zT_p)^*H(\delta_zT_q)\,dz
       -c(\delta_0T_p)^*H(\delta_0T_q)=0.
\]
Thus
\begin{equation}\label{eq:wall-product-rule}
 \begin{split}
 \scD W_{p,q}
 &=\scD T_q-(\scD T_p)^*
       +(\scD T_p)^*HT_q+T_p^*H\scD T_q\\
 &=(I+T_p^*H)\scD T_q+(\scD T_p)^*(HT_q-I).
 \end{split}
\end{equation}
Substitute \eqref{eq:T-origin-residual} into
\eqref{eq:wall-product-rule}. The two wall contributions are
\begin{equation}\label{eqn:half-spacemechanism}
       \begin{split}
 &(I+T_p^*H)(l\otimes o_q)+(o_p\otimes l)(HT_q-I)\\
 &\quad=\bigl((I+T_p^*H)l\bigr)\otimes o_q
            +o_p\otimes\bigl((HT_q-I)^*l\bigr)\\
 &\quad=\tfrac12o_p\otimes o_q-\tfrac12o_p\otimes o_q=0,
 \end{split}
\end{equation}
by \eqref{eq:wall-dressed-column}--\eqref{eq:wall-dressed-row}.
The two remaining terms are
$-2(I+T_p^*H)\mathcal D_q$ and $-2\mathcal D_p^*(HT_q-I)$,
which proves the first case of \eqref{eq:folded-residual}.

2. For $p<0<q$, assume $-p<q$ (otherwise both \(W_{p,q} =0\) and the terms in \eqref{eq:folded-residual} are also 0), we have $W_{p,q}=T_{-p,q}$. Differentiating, with $P=P_{-p,q}$, gives
\[
 \scD T_{-p,q}=e^{-p\Delta}\bigl[
 \partial_{p}P+\partial_q P-\Delta P-P\Delta
          -\cL_cP+2[P,\nabla]\bigr]e^{-q\Delta}H_-^{-1}.
\]
The two parts of the bracket are, respectively,
\[
 \begin{aligned}
 (\partial_p P-\Delta P)+(\partial_q P-P\Delta)
     &=-\mathcal E_{-p}-\mathcal E_q,\\
 -\cL_cP+2[P,\nabla]&=-\mathcal E_{-p}-\mathcal E_q,
 \end{aligned}
\]
by \eqref{eq:endpoint-single} and \eqref{eq:LP-interior}; their sum
proves the second case of \eqref{eq:folded-residual}. For $q<0<p$, use $W_{p,q}=-W_{q,p}^*$.
\end{proof}

\subsection{The cancellation in the projected kernel}\label{subsec:projected-backward}
We now prove the backward equation for $K$ by showing the terms in Proposition~\ref{prop:folded-residual} are zero, after multiplication
by the \(S\) and the projections. Recall that 
\begin{equation}
 L_+=e^{2t(I-\tau_+)},\qquad L_-=e^{2t(I-\tau_-)},\qquad L_+^*=L_-.
\end{equation}

\begin{proposition}[Projected regular backward equation]\label{prop:projected}
Let $0\le c<1$. At every
$(t,h)\in\mathcal R_{\boldsymbol x}\cap\cA_{\boldsymbol r}$,
\begin{equation}\label{eq:projected}
 (\partial_t-\cL_c)K_{ij}(t,h)=0\qquad(1\le i,j\le n).
\end{equation}
Here $K_{ij}=R_{ij}+\widetilde K_{ij}$ includes the threshold
projections in \eqref{eq:skew-kernel}.
\end{proposition}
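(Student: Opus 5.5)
The plan is to split $K_{ij}=R_{ij}+\widetilde K_{ij}$ and treat the two parts separately. The term $R_{ij}$ depends on neither $t$ nor $h$: it involves only $e^{x\Delta}$, $H$ and the fixed projections $1_{r_i},1_{r_j}$. Hence $\partial_tR_{ij}=0$ and $\cL_cR_{ij}=0$, since every term of \eqref{eq:generator} annihilates a function that is constant in $h$.

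For $\widetilde K_{ij}$, the projections and $J_0$ are constant, so the identity stated before Section~\ref{subsec:DVcomputation} gives
\[
(\partial_t-\cL_c)\widetilde K_{ij}=1_{r_i}J_0\mathcal S_i^*\,\scD V_{ij}\,\mathcal S_j1_{r_j}.
\]
This uses that $(t,h)\in\mathcal R_{\boldsymbol x}$ keeps every $W_{p,q}$ in a fixed regime and makes $h$ regular at all the endpoints $|t-x_j|$, $t+x_j$ and $0$. Under these conditions Proposition~\ref{prop:folded-residual} applies entrywise, with $p,q\in\{t\pm x_i,\,t\pm x_j\}$. It expresses each entry of $\scD V_{ij}$ as a finite sum of rank-one operators of three kinds:
\begin{itemize}
\item right factors $\delta_{h(b)}e^{-b\Delta}H_-^{-1}$ with $b=q>0$, coming from $\mathcal D_q$ and $\mathcal E^q$;
\item left factors that are adjoints of these, with $b=p$, coming from $\mathcal D_p^*$;
\item left factors $e^{-p\Delta}\delta_{h(-p)}$, coming from $\mathcal E_{-p}$ when $p<0$.
\end{itemize}
The goal is to show that each such factor is annihilated by the outer $\mathcal S$ factor together with the adjacent projection.

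\emph{Right factors.} By the cancellation \eqref{eq:triangular-right}, a right factor multiplied by $\mathcal S_j$ becomes either
\[
\delta_{h(t-x_j)}L_-H_-^{-1}\quad\text{(first block column)}\qquad\text{or}\qquad \delta_{h(t+x_j)}L_-\quad\text{(second block column, using $H_-^{-1}H=I$)}.
\]
The kernel $L_-=e^{2t(I-\tau_-)}$ is lower triangular, so $L_-(u,v)\ne0$ only for $v\le u$. Also $H_-^{-1}(u,v)$ is supported on $u\ge v$. Hence both row vectors are supported on $v\le h(b)$ with $b=t\pm x_j$. Since $|b-x_j|\le t$ (using $t>x_j$ in the case $b=t-x_j$), we get $h(b)\le r_0(t,x_j;h)\le r_j$ on $\cA_{\boldsymbol r}$. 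The right projection $1_{r_j}$ keeps only $v>r_j$, so the product vanishes.

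\emph{Left factors.} The factors $\mathcal D_p^*$ are handled by the adjoint of the same computation on the left: $\mathcal S_i^*$ produces $L_+$, which is upper triangular, and the projection $1_{r_i}$ kills the result. For the factors coming from $\mathcal E_{-p}$ with $p=t-x_i<0$, the left factor $e^{-p\Delta}\delta_{h(x_i-t)}$ combines with $S_{-t,-x_i}$ into $L_+\delta_{h(x_i-t)}$. This vector is supported on $u\le h(x_i-t)\le r_i$ and is therefore killed by $1_{r_i}$.

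\emph{Bookkeeping.} In the first case of \eqref{eq:folded-residual} the extra factors $(I+T_p^*H)$ and $(HT_q-I)$ sit on the opposite side of the rank-one term. They play no role, because the vanishing comes entirely from the side next to the projection. In every case one therefore just checks that each term carries at least one factor of the killed type on the correct side.

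\emph{Main obstacle.} The hardest step should be this bookkeeping across the four blocks of $V_{ij}$ and the sign regimes of $t-x_i$ and $t-x_j$. The issues are:
\begin{itemize}
\item $J_0$ and $H$ placements: the lower-right block carries $-HS_{-t,x_i}$ on the left, and one has to confirm the adjoint cancellation still produces $L_+$ there, using $H^*=-H$ and $(H_-^{-1})^*=-H_+^{-1}$.
\item Validity of the inverse identities: these hold only on the appropriate weighted spaces, so I would verify them on the finitely supported vectors $\delta_{h(b)}$, where the series converge trivially.
\item Endpoint height bounds: the cases $p<0<q$ with $-p>q$ must be excluded, but there both sides already vanish. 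In the remaining cases one must confirm that every endpoint height $h(|t\pm x_j|)$ is indeed dominated by $r_j$ via the envelope $r_0(t,x_j;h)$.
\end{itemize}
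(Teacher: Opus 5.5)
Your proposal is correct and follows essentially the same route as the paper: you drop the constant $R_{ij}$, reduce to the sandwich $1_{r_i}J_0\mathcal S_i^*\scD V_{ij}\mathcal S_j1_{r_j}$, and kill each rank-one residual from Proposition~\ref{prop:folded-residual}. You do this by turning its outer factor into $L_-$, $L_-H_-^{-1}$ or $L_+$ via \eqref{eq:triangular-right}, and then using triangularity together with $h\le r_j$ on the dependence interval. Your list of factor types omits the right factor $\delta_{h(x_j-t)}e^{(x_j-t)\Delta}$ arising when $q=t-x_j<0<p$; the paper handles that case by the skew relation $\scD W_{p,q}=-(\scD W_{q,p})^*$.
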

\begin{proof}
The kernel $R_{ij}$ is independent of $t,h$, so
\begin{equation}\label{eq:residual-sandwich}
 (\partial_t-\cL_c)K_{ij}
 =1_{r_i}J_0\mathcal S_i^*(\scD V_{ij})\mathcal S_j1_{r_j}.
\end{equation}
Here $\scD$ acts entrywise. Each entry of $V_{ij}$ is a $W_{p,q}$
with $p\in\{t-x_i,t+x_i\}$ and $q\in\{t-x_j,t+x_j\}$, so
Proposition~\ref{prop:folded-residual} describes every residual.
The assumption that \((t,h)\in \mathcal{A}_{\boldsymbol{r}}\) gives $h(y)\le r_j$ for
$y\in[(x_j-t)_+,x_j+t]$.

The expansion $L_\pm=e^{2t}\sum_{k\ge0}(-2t)^k\tau_\pm^k/k!$ shows that
$L_+$ is upper triangular and $L_-$ is lower triangular.
Since $H_-^{-1}$ is lower triangular, so is $L_-H_-^{-1}$.
Consequently, for $m\le r$,
\begin{equation}\label{eq:triangular-annihilation}
 \overline{1}^mL_-1_r=\overline{1}^mL_-H_-^{-1}1_r=0,\qquad
 1_rL_+\overline{1}^m=0.
\end{equation}

For $p,q>0$, Proposition~\ref{prop:folded-residual} gives
\[
 \scD W_{p,q}
 =-2(I+T_p^*H)\mathcal D_q-2\mathcal D_p^*(HT_q-I).
\]
The right part of $\mathcal D_q$ (defined in  \eqref{eq:OD}) is
$(e^{-q\Delta}H_-^{-1})(h(q),\cdot)$. Multiplying the right \(S\) factor, \eqref{eq:triangular-right} turns it into
$L_-H_-^{-1}$ if $q=t-x_j$, or $L_-$ if $q=t+x_j$. So
\eqref{eq:triangular-annihilation} yields
\[
 \begin{split}
 \mathcal D_{t-x_j}S_{t,-x_j}1_{r_j}&=0\qquad(t>x_j),\\
 \mathcal D_{t+x_j}HS_{t,x_j}1_{r_j}&=0.
 \end{split}
\]
Thus the first residual term vanishes after right projection. The term contains \(\mathcal{D}_p^*\) vanishes for the same reason, after the left projection.

If $p<0<q$, then $p=t-x_i$ and the left factor is
$S_{t,-x_i}^*=S_{-t,-x_i}$. Let $q=t+x_j\text{ or }t-x_j$ .
For $a<b$, the residual is
\[
 -2e^{(x_i-t)\Delta}\mathcal E_{x_i-t} e^{-q\Delta}H_-^{-1}
 -2e^{(x_i-t)\Delta}\mathcal E^q e^{-q\Delta}H_-^{-1}.
\]
The left part of \(\mathcal{E}_{x_i-t}\) (defined in \eqref{eqn:endpointerrors}) is $\delta_{h(x_i-t)}$. By the transposition of
\eqref{eq:triangular-right}, it gives
\[
 1_{r_i}S_{-t,-x_i}e^{(x_i-t)\Delta}\delta_{h(x_i-t)}
   =1_{r_i}L_+\delta_{h(x_i-t)}=0,
\]
since $h(x_i-t)\le r_i$.
The second term has right part \(\delta_{h(q)}\); since $h(q)\le r_j$, the same argument gives \(0\). For $q<0<p$, interchange
$i,j$ and transpose: $\scD W_{p,q}=-(\scD W_{q,p})^*$, so the same argument shows it is \(0\).
For $p,q<0$, $W_{p,q}=0$. All entries in
\eqref{eq:residual-sandwich} therefore vanish, proving
\eqref{eq:projected}.
\end{proof}

Proposition~\ref{prop:projected} proves the backward equation for the
kernel after threshold projection has removed the endpoint errors.
To pass to its Pfaffian, time differentiation and deterministic step
motion obey the chain rule. Nucleation requires one additional fact:
by Proposition~\ref{prop:rank-two}, it changes the weighted skew kernel
by rank at most two, and the Pfaffian is exactly affine along that
change by Lemma~\ref{lem:pfcalculus}.

\begin{theorem}[Regular backward equation for the Pfaffian]\label{thm:classical}
For $0\le c<1$, at every
$(t,h)\in\mathcal R_{\boldsymbol x}\cap\cA_{\boldsymbol r}$,
\[
 (\partial_t-\cL_c)\Fhat_n(t,h)=0.
\]
\end{theorem}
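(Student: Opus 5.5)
The plan is to pass from the kernel-level identity of Proposition~\ref{prop:projected} to the Pfaffian by splitting $\partial_t-\cL_c$ into its derivation part and its nucleation part. Work with the conjugated kernel $A(t,h)=\mathcal M(JK(t,h))\mathcal M$ of Proposition~\ref{prop:trace-class}. Since $\mathcal M$ does not depend on $(t,h)$, $\Fhat_n=\Pf(J+A)$, and $\mathcal M(JK)\mathcal M$ is again skew.

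Write $\partial_t-\cL_c=\mathsf T-\mathsf N$, where
\[
\mathsf T=\partial_t+\sum_i\sigma_i\partial_{y_i},\qquad
\mathsf N F=2\int_0^\infty\delta_zF\,dz+c\,\delta_0F .
\]
The part $\mathsf T$ is a first-order derivation. At a regular point $(t,h)$ the map $(t,\boldsymbol y)\mapsto A$ is differentiable in trace norm: the endpoints avoid jumps of $h$, the regime of each $W_{p,q}$ is locally constant, and the plateau products \eqref{eq:plateau-product} are differentiable. The Pfaffian chain rule then gives
\[
\mathsf T\Pf(J+A)=\tfrac12\Pf(J+A)\tr\bigl((J+A)^{-1}\mathsf TA\bigr)
\]
when $J+A$ is invertible. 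Where it is not invertible, I would use the polarization form from Lemma~\ref{lem:pfcalculus}, or continuity in the parameter. In either case the result is linear in $\mathsf TA$.

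For the nucleation part, Proposition~\ref{prop:rank-two} together with the fixed outer factors $1_{r_i}J_0\mathcal S_i^*$ and $\mathcal S_j1_{r_j}$ shows that $\delta_zA$ is a skew rank-$\le2$ perturbation of the form $\alpha_z\otimes\gamma_z-\gamma_z\otimes\alpha_z$. Lemma~\ref{lem:pfcalculus}, which says the Pfaffian is exactly affine along such a change, then gives
\[
\delta_z\Pf(J+A)=\tfrac12\Pf(J+A)\tr\bigl((J+A)^{-1}\delta_zA\bigr),
\]
or its adjugate analogue. There is no quadratic remainder. This is precisely the same vanishing mechanism as \eqref{eq:quadratic-jump-zero}: the scalar $\langle\alpha,(J+A)^{-1}\alpha\rangle$ vanishes because $(J+A)^{-1}$ is skew. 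Integrating over $z$, and adding the wall term, shows that $\mathsf N\Fhat_n$ is obtained by the same linear functional applied to $\mathsf NA$. Hence
\[
(\partial_t-\cL_c)\Fhat_n=\tfrac12\Fhat_n\tr\bigl((J+A)^{-1}(\partial_t-\cL_c)A\bigr).
\]
Proposition~\ref{prop:projected} gives $(\partial_t-\cL_c)A=\mathcal M J(\partial_t-\cL_c)K\mathcal M=0$, so the right-hand side vanishes.

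The main obstacle is analytic rather than algebraic. First, the interchange of $\int_0^\infty dz$ with the trace needs care; finite propagation makes $\delta_zA=0$ for $z>x_n+t$, so only bounded $z$ matter, and a uniform trace-norm bound on $\delta_zA$ comes from Appendix~\ref{sec:analytic}. Second, differentiability of $A$ in trace norm along $\mathsf T$ has to be established. Third, the non-invertible case must be handled. For that case I would use the affine identity of Lemma~\ref{lem:pfcalculus} in the form
\[
\Pf(J+A+\epsilon B)=\Pf(J+A)+\epsilon\,\ell_A(B)+O(\epsilon^2),
\]
with the $O(\epsilon^2)$ term absent for skew rank-two $B$, where $\ell_A$ is a linear functional continuous in trace norm. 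The derivation part and the nucleation part are then both expressed through the same $\ell_A$, and the whole generator gives $\ell_A\bigl((\partial_t-\cL_c)A\bigr)=0$.
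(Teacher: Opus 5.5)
Your proposal is correct and follows essentially the same route as the paper. The paper's proof likewise combines the kernel identity of Proposition~\ref{prop:projected}, trace-norm differentiability and integrability from Corollary~\ref{cor:everythingtrueintracenorm}, the skew rank-two structure of $\delta_zK$ from Proposition~\ref{prop:rank-two}, and the exact affinity and derivative formulas of Lemma~\ref{lem:pfcalculus}, using the Fr\'echet-derivative argument of \cite[Lemma 4.11]{MQR} in the non-invertible case. Your version mainly makes explicit the common linear functional $\ell_A$, which the paper's terse proof leaves implicit.
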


\begin{proof}
      Proposition~\ref{prop:projected} gives \((\partial_{t}-\mathcal{L}_c)K=0\) pointwise. Corollary~\ref{cor:everythingtrueintracenorm} justifies differentiation and integration in trace norm.
Proposition~\ref{prop:rank-two} shows that the nucleation gives a skew rank-two perturbation of the kernel. Then Lemma~\ref{lem:pfcalculus} gives
\[
 (\partial_{t}-\mathcal{L}_c)K=0 \Rightarrow (\partial_{t}-\mathcal{L}_c)\Fhat_n(t,h)=0.
\]
\end{proof}

\section{Support, initial value, and a uniform bound}\label{sec:one-FV}\label{sec:initial}
We establish three properties of the proposed Pfaffian: it vanishes
outside the deterministic allowed set \(\mathcal{A}_{\boldsymbol{r}}\); it has the prescribed initial
value; it is bounded uniformly over all initial profiles on bounded
time intervals. Together with the regular backward equation of
Section~\ref{sec:regular}, these properties will be used in
Section~\ref{sec:weak} to verify the integral Kolmogorov equation. Throughout this section, $0\le c<1$.
\subsection{Support}\label{subsec:forbidden}
The deterministic envelope is a lower bound for the PNG height.
Thus the height event is impossible whenever
\[
 y\in[(x_k-t)\vee0,x_k+t],
 \qquad \ell=h(y)>r_k
\]
for some $k$ and $y$.
\begin{theorem}\label{thm:FV}
For $0\le c<1$, $t>0$, and $h\in\cX_{\mathrm{fin}}$,
\begin{equation}\label{eq:FV}
 (t,h)\notin\cA_{\boldsymbol r}
 \quad\Longrightarrow\quad
 \Fhat_n(t,h)=0.
\end{equation}
\end{theorem}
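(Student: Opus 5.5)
The plan is to show that, when $(t,h)\notin\cA_{\boldsymbol r}$, the operator $I+K$ (conjugated as in Proposition~\ref{prop:trace-class}) has a nontrivial kernel. Then $\Pf(J+JK)^2=\det(I+K)$, up to the sign conventions of \cite{Rains}, vanishes. Fix $k$ and a point $y\in[(x_k-t)\vee0,\,x_k+t]$ with $\ell=h(y)>r_k$. Since $h\in\cX_{\mathrm{fin}}$ is piecewise constant, I would choose $y$ inside an open plateau of height $\ell$ whenever possible, and treat the endpoint and peak cases by upper semicontinuity.

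The first step is a path decomposition of the hitting kernels whose interval contains $y$. For $a\le y\le b$, the Markov property at $y$ gives
\[
P_{a,b}^h=P_{a,y}^he^{(b-y)\Delta}+Q_{a,y}^hP_{y,b}^h .
\]
On the event $X(y)\le \ell$ the walk has already hit $h$. Combined with $P_{y,y}^h=\overline 1^{\ell}$, this gives the operator identity
\[
P_{a,b}^h=P_{a,b}^h\,\overline 1^{\ell}_{(y)}+(\text{part where }X(y)>\ell),
\]
where $\overline1^{\ell}_{(y)}$ restricts the position at $y$. Only the part of the walk that sits at heights $\le\ell$ at time $y$ produces an exact free propagator. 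After conjugation by $\mathcal S_k$, using the cancellations \eqref{eq:triangular-right}, such a free propagator becomes $L_-$, $L_-H_-^{-1}$ or $L_+$ acting on heights $\le\ell$.

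The second step uses the triangularity \eqref{eq:triangular-annihilation}. The projections $1_{r_k}$ only see heights $>r_k$, but the free contributions from heights in $(r_k,\ell]$ are exactly reproduced by the $R_{kk}$ and $e^{x\Delta}He^{x\Delta}$ pieces of $K$. I would therefore look for a null vector $\psi$ supported in the $k$-th block. Its shape depends on the regime:
\begin{itemize}
\item if $y\ge t-x_k$ or $y\le x_k-t$, the relevant hitting interval is $[0,t\pm x_k]$ or $[x_k-t,\,t\pm x_k]$;
\item in each case I would take $\psi=1_{r_k}\,\mathcal S_k^{-1}(\text{free propagation from }y)\,\delta_\ell$, suitably dressed by $(I+T^*H)$ in the $p,q>0$ regime, mirroring the vectors $u_p$ and $\beta_p$ of Proposition~\ref{prop:rank-two}.
\end{itemize}
I would then verify $(I+K)\psi=0$ entry by entry in each block $i$. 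Rows $i\neq k$ should vanish through the identities $W^*_{p,q}=-W_{q,p}$ and the wall identities \eqref{eq:wall-dressed-column}--\eqref{eq:wall-dressed-row}, which already produced the half-space cancellation \eqref{eqn:half-spacemechanism}.

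If an explicit null vector proves too messy, I would use a fallback. First reduce to the minimal violating configuration: replace $h$ by $h\wedge\ell$ outside the plateau at $y$, or lower the plateau to $r_k+1$. I would then argue that $\Fhat_n$ is unchanged along such modifications, by expressing the difference as a finite sum of rank-two nucleation increments via Proposition~\ref{prop:rank-two} and the affine Pfaffian calculus of Lemma~\ref{lem:pfcalculus}. Each increment is sandwiched by projections that kill it, via the same triangularity argument as in Proposition~\ref{prop:projected}. The remaining minimal case, a single step just above $r_k$, would then be computed directly.

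The main obstacle is the $p,q>0$ regime, with $y<t-x_k$ and interval $[0,t-x_k]$. There the wall term $T_p^*HT_q$ couples the $k$-th block to all the other blocks, so the null vector must be dressed by $H$ and $H_-^{-1}$ in a way that $1_{r_i}$ still annihilates. I would first settle $n=1$, where $K$ is a single $2\times2$ block, to find the correct dressing. Only then would I check the cross terms for general $n$, using lower-triangularity of $L_-H_-^{-1}$ and $(H_-^{-1})^*H=-I$.
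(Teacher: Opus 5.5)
Your overall strategy, producing a nonzero null vector of $I+K$ from a violating point $y$ with $\ell=h(y)>r_k$, is the right one. The gap is the ansatz that this null vector lives in the $k$-th block, and nothing in your plan replaces it.

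Take a candidate with zero minus components and plus component $f_k$ in block $k$ only, and impose the no-hit condition $1_\ell Q^h_{y,b_k}L_-f_k=0$. This condition is what makes all $(i,-)$ rows and the $(i,+)$ rows with $i\le k$ vanish, and it also makes the wall terms $T_p^*H$ cancel. The rows $(i,+)$ with $i>k$ are then left equal to $-1_{r_i}e^{(x_k-x_i)\Delta}f_k$ if $x_i-t\le y$, and to $-1_{r_i}L_+P^h_{x_i-t,\,x_k+t}L_-f_k$ if $x_i-t>y$. These impose conditions on all of each half-line $(r_i,\infty)$, while the no-hit condition leaves $f_k$ only an $(\ell-r_k)$-dimensional surplus. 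So they can be met in general only when the relevant intervals are empty, as in the separated situation of Lemma~\ref{lem:cluster}. The relation $W^*_{p,q}=-W_{q,p}$ only pairs the $(i,k)$ and $(k,i)$ blocks and cannot make $K_{ik}\psi_k$ vanish. Dressing by $(I+T^*H)$ does not touch these rows either.

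The missing idea is to spread the vector over all blocks $j\ge k$. One takes $F_{i,-}=0$ and plus components $f_k,\dots,f_n$ subject to the \emph{common} no-hit equation $1_\ell\sum_{j\ge k}Q^h_{y,b_j}L_-f_j=0$. By the Markov property at $y$ this gives $\sum_jQ^h_{a,b_j}L_-f_j=0$ for all $a\le y$. Hence every hitting term collapses to free propagation, and what remains is the coupled system $\mathscr Bf=0$. Its solvability is not explicit. It comes from the Schur-complement count of Lemma~\ref{lem:FV-surplus}: the high-height block is a small perturbation of an invertible block-triangular free operator (Lemmas~\ref{lem:FV-halfline} and~\ref{lem:tails}), giving $\dim\ker\mathscr B\ge\ell-r_k>0$. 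A closed form built from free propagators should not be expected. Already for $n=1$ the condition is $Q^h_{y,b_1}L_-f_1=0$, which involves the killed semigroups along the whole profile on $[y,b_1]$.

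Your fallback does not repair this, for four reasons.
\begin{enumerate}
\item The nucleation increments $\delta_zW_{p,q}=u_p\otimes\beta_q-\beta_p\otimes u_q$ are not annihilated by the projections. In $\beta_q$ the factor $Q^h_{z,q}$ sits between $\delta_{h(z)+1}$ and $L_-$, so \eqref{eq:triangular-annihilation} does not apply. Proposition~\ref{prop:projected} kills only the endpoint residuals, whose outer factor $\delta_{h(q)}$ is directly adjacent to $L_\pm$, and this uses allowedness $h(q)\le r_j$, which need not hold here. If the increments were killed, $\Fhat_n$ would be invariant under every $C_z$, which the transition probability on the allowed set is not.
\item The affine formula \eqref{eq:pf-affine} needs $I-JA$ invertible, which fails precisely where $\Fhat_n=0$. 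Propagating the zero along such increments is therefore circular.
\item Lowering a plateau changes $h$ on an interval rather than at finitely many points, so it is not a finite sum of rank-two increments.
\item The ``minimal'' configuration still carries an arbitrary profile and all the other blocks, so it is no easier than the original problem.
\end{enumerate}
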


 We prove the corresponding vanishing of the
Pfaffian by constructing a nonzero null vector of
$I+K$. The construction is non-explicit. Since the kernels are defined on \(\ell^2(\mathbb{Z})\) with cut-offs, essentially one wants to solve an infinite system of linear equations. Thus, one tries to construct the null vector through dimension counting. We will use the following elementary abstract fact:

\begin{lemma}\label{lem:FV-surplus}
Let $E_{\rm in}$ and $E_{\rm out}$ be finite-dimensional spaces,
and let $\mathcal U$ and $\mathcal V$ be Hilbert spaces.
Suppose a bounded operator
\[
 B=
 \begin{pmatrix}
 B_{00}&B_{01}\\
 B_{10}&B_{11}
 \end{pmatrix}
 :
 E_{\rm in}\oplus\mathcal U
 \longrightarrow
 E_{\rm out}\oplus\mathcal V
\]
has a bounded invertible block $B_{11}:\mathcal U\to\mathcal V$.
Then
\[
 \dim\ker B\ge \dim E_{\rm in}-\dim E_{\rm out}.
\]
More precisely, the map
\[
 v\longmapsto
 \bigl(v,-B_{11}^{-1}B_{10}v\bigr)
\]
is an isomorphism from
$\ker(B_{00}-B_{01}B_{11}^{-1}B_{10})$ onto $\ker B$.
\end{lemma}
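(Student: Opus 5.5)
The approach is the standard Schur-complement reduction: use the invertibility of $B_{11}$ to eliminate the second component of any kernel vector, so that the infinite-dimensional problem collapses to a finite-dimensional one, and then count dimensions.

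\emph{Step 1 (eliminating the second component).} Write a general element of $E_{\rm in}\oplus\mathcal U$ as $(v,w)$. The equation $B(v,w)=0$ is the pair
\[
 B_{00}v+B_{01}w=0,\qquad B_{10}v+B_{11}w=0.
\]
Since $B_{11}$ is boundedly invertible, the second equation holds if and only if $w=-B_{11}^{-1}B_{10}v$. Substituting this into the first equation shows that $(v,w)\in\ker B$ if and only if
\[
 v\in\ker S,\qquad S:=B_{00}-B_{01}B_{11}^{-1}B_{10},\qquad w=-B_{11}^{-1}B_{10}v.
\]

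\emph{Step 2 (the isomorphism).} Define $\Phi(v):=(v,-B_{11}^{-1}B_{10}v)$. This map is linear and bounded, being a composition of bounded operators. By Step 1 it sends $\ker S$ into $\ker B$, and it is onto $\ker B$. It is injective because its first component is $v$ itself. Hence $\Phi:\ker S\to\ker B$ is a linear isomorphism, and its inverse is the restriction of the projection onto the first component, which is also bounded.

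\emph{Step 3 (dimension count).} The Schur complement $S$ maps $E_{\rm in}$ to $E_{\rm out}$, and both spaces are finite-dimensional. Rank--nullity therefore gives
\[
 \dim\ker S=\dim E_{\rm in}-\operatorname{rank}S\ge\dim E_{\rm in}-\dim E_{\rm out}.
\]
Combining this with the isomorphism of Step 2 yields $\dim\ker B\ge\dim E_{\rm in}-\dim E_{\rm out}$.

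There is no real obstacle in the argument. The only points to watch are, first, that invertibility of $B_{11}$ is exactly what lets us solve for $w$ uniquely, and second, that finite-dimensionality enters only through rank--nullity for $S$.
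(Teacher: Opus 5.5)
Your proof is correct and uses the same Schur-complement elimination as the paper: solve the second row for $w=-B_{11}^{-1}B_{10}v$, then substitute into the first row. You also make explicit the converse inclusion (surjectivity onto $\ker B$) and the rank--nullity count for $B_{00}-B_{01}B_{11}^{-1}B_{10}$, both of which the paper's short proof leaves implicit.
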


\begin{proof}
Assume \((v,w)\in \ker B\). The second coordinate gives
\[
 w=-B_{11}^{-1}B_{10}v.
\]
Substitution into the first coordinate gives
\[
 (B_{00}-B_{01}B_{11}^{-1}B_{10})v=0.
\]
\end{proof}
Fix $0\le c<1$. Choose \(\eta\) such that
\[
 0<\eta_1<\cdots<\eta_n<\eta,
 \qquad ce^\eta<1,
\]
where $\eta_1,\ldots,\eta_n$ are the parameters used in \eqref{eq:weights}.
Throughout this subsection, set
\[
 \mathcal H_a:=\ell^2_\eta(a,\infty),
 \qquad a\in\mathbb Z.
\]
\(\ell^2_\eta(a,\infty)\) is defined in \eqref{eqn:weightedspace}.
The next Lemma gives the invertibility of the projected maps.
\begin{lemma}\label{lem:FV-halfline}
Fix $\eta\in\mathbb R$.
For $d,\alpha,\beta\in\mathbb R$ and $N\in\mathbb Z$, the operator
\[
 A_N=1_Ne^{dI+\alpha\tau_++\beta\tau_-}1_N
 \quad\text{on }\HH_N
\]
is invertible, with
\begin{equation}\label{eq:FV-halfline-inverse}
 A_N^{-1}
 =e^{-d}(1_Ne^{-\beta\tau_-}1_N)
         (1_Ne^{-\alpha\tau_+}1_N).
\end{equation}
Moreover,
\[
 \max\{\|A_N\|,\|A_N^{-1}\|\}
 \le e^{|d|+|\alpha|e^{-\eta}+|\beta|e^\eta},
\]
uniformly in $N$.
\end{lemma}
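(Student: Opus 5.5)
The plan is to factor the exponential into one-sided pieces, each of which compresses cleanly to the half-line, and then invert the factors separately. Write $B_\pm:=1_N\tau_\pm1_N$, viewed as operators on $\HH_N=\ell^2_\eta(N,\infty)$.

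\emph{Factorization.} Since $\tau_+$ and $\tau_-$ commute on $\mathbb Z$ (indeed $\tau_+\tau_-=I$), and every exponential series converges in operator norm on $\ell^2_\eta$ (using $\|\tau_+\|=e^{-\eta}$ and $\|\tau_-\|=e^{\eta}$), we have
\[
 e^{dI+\alpha\tau_++\beta\tau_-}=e^d e^{\alpha\tau_+}e^{\beta\tau_-}.
\]

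\emph{Triangularity.} I will record two support facts.
\begin{itemize}
 \item The kernel of $\tau_+$ is $\ind_{\{v=u+1\}}$. If $u>N$ then $v=u+1>N$, so $1_N\tau_+(I-1_N)=0$. Hence $1_N\tau_+=1_N\tau_+1_N$, and by induction $1_N\tau_+^k1_N=B_+^k$.
 \item The kernel of $\tau_-$ is $\ind_{\{v=u-1\}}$. If $u\le N$ then $v=u-1<N$, so $(I-1_N)\tau_-1_N=0$. Hence $\tau_-1_N=1_N\tau_-1_N$, and $1_N\tau_-^k1_N=B_-^k$.
\end{itemize}
Summing the exponential series term by term, which is legitimate by norm convergence, gives $1_Ne^{\alpha\tau_+}=e^{\alpha B_+}1_N$ (more precisely $1_Ne^{\alpha\tau_+}=e^{\alpha B_+}$ composed with $1_N$) and $e^{\beta\tau_-}1_N=1_Ne^{\beta B_-}$. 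Here $e^{\alpha B_+}$ and $e^{\beta B_-}$ denote exponentials computed in $\HH_N$, where $1_N$ acts as the identity. Therefore
\[
 A_N=e^d\,(1_Ne^{\alpha\tau_+}1_N)(1_Ne^{\beta\tau_-}1_N)=e^d\,e^{\alpha B_+}e^{\beta B_-}.
\]
This ordering is the crucial point: the upper-triangular factor must stand on the left and the lower-triangular factor on the right. In that order the compression of the product splits into the product of the compressions. In the opposite order it does not split, because $B_-B_+\ne 1_N$ (the boundary row $\delta_{N+1}$ is lost).

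\emph{Inverse.} Each factor $e^{\gamma B}$ of a bounded operator $B$ is invertible, with inverse $e^{-\gamma B}$. Hence
\[
 A_N^{-1}=e^{-d}e^{-\beta B_-}e^{-\alpha B_+}=e^{-d}(1_Ne^{-\beta\tau_-}1_N)(1_Ne^{-\alpha\tau_+}1_N),
\]
using the same identification of compressions as above. This is exactly \eqref{eq:FV-halfline-inverse}.

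\emph{Norm bounds.} Compression does not increase norm, so on $\HH_N$ we have $\|B_+\|\le e^{-\eta}$ and $\|B_-\|\le e^{\eta}$. Consequently
\[
 \|e^{\pm\alpha B_+}\|\le e^{|\alpha|e^{-\eta}},\qquad \|e^{\pm\beta B_-}\|\le e^{|\beta|e^{\eta}}.
\]
Multiplying these with the scalar factor $e^{\pm d}$ bounds both $\|A_N\|$ and $\|A_N^{-1}\|$ by $e^{|d|+|\alpha|e^{-\eta}+|\beta|e^\eta}$. None of these bounds involves $N$, which gives the uniformity.

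The only delicate step is the triangularity bookkeeping: choosing the correct factor ordering and checking which side each projection can be absorbed on. Everything else follows from norm-convergent power series.
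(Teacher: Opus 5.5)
Your proof is correct and follows essentially the same route as the paper: use the one-sided support of $\tau_\pm$ to factor $A_N=e^d(1_Ne^{\alpha\tau_+}1_N)(1_Ne^{\beta\tau_-}1_N)$, identify each compressed exponential with the exponential of $1_N\tau_\pm1_N$, invert factor by factor, and bound the norms using $\|\tau_\pm\|_{\ell^2_\eta}=e^{\mp\eta}$. Your bookkeeping of which side each projection can be absorbed on, and why the factor order matters, is more explicit than the paper's one-line justification.
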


\begin{proof}
      Since \(\tau_-\) preserves the support above \(N\), we can factor the operator \(A_N\) as
      \[A_N = e^{d}(1_N e^{\alpha \tau_+}1_N)(1_Ne^{\beta \tau_-}1_N).\]
Notice that for \(\tau =\tau_-\) or \(\tau = \tau_+\),
\[1_N(\alpha \tau)^k1_N = (1_N \alpha \tau1_N)^k.\]
So, \(1_N e^{\alpha \tau_{\pm}}1_N = e^{\alpha 1_N\tau_{\pm}1_N}\) and we get the inverse in \eqref{eq:FV-halfline-inverse}. For the bound, since $\|\tau_+\|_{\ell^2_\eta}=e^{-\eta}$,
$\|\tau_-\|_{\ell^2_\eta}=e^\eta$, and $1_N$ is a contraction,
\[
 \|A_N\|,\ \|A_N^{-1}\|
 \le \exp\{|d|+|\alpha|e^{-\eta}+|\beta|e^\eta\},
\]
independently of $N$.
\end{proof}

Now we want to construct the null vector. Suppose the deterministic bound fails at  $x_k$, i.e. there exists
\begin{equation}\label{eq:FV-witness}
 y\in[(x_k-t)\vee0,x_k+t],\qquad \ell=h(y)>r_k.
\end{equation}
Define
\begin{equation}\label{eq:FV-endpoints}
 a_i=x_i-t,\qquad b_i=x_i+t,\qquad
 m=\max\{i\in\{k,\ldots,n\}:a_i\le y\}.
\end{equation}
Then $a_i\le y$ exactly when $i\le m$,
and $b_j\ge y$ whenever $j\ge k$.

Our unknowns are functions \(f_j \in \mathcal{H}_{r_j}, j =k,\ldots, n\). Given \(f=(f_k,\ldots,f_n)\), we construct our \(2n \times 1\) null vector $F$ to be of the following form:
\begin{equation}\label{eq:coupled-embedding}
 F_{i,-}=0,\qquad
 F_{i,+}=
 \begin{cases}0,&i<k,\\f_i,&i\ge k.\end{cases}
\end{equation}
Define the bounded operator \( \mathscr B:
\bigoplus_{j=k}^n\HH_{r_j}
 \longrightarrow
\HH_\ell\oplus\bigoplus_{i=k+1}^n\HH_{r_i}\),
\begin{subequations}\label{eq:coupled-other}
\begin{numcases}{(\mathscr Bf)_i=}
 \displaystyle
 1_\ell\sum_{j=k}^n Q_{y,b_j}^hL_-f_j,
 & $i=k$,
 \label{eq:coupled-other-first}\\[3mm]
 \displaystyle
 1_{r_i}\sum_{j=k}^{i-1}e^{(x_j-x_i)\Delta}f_j,
 & $k<i\le m$,
 \label{eq:coupled-other-middle}\\[3mm]
 \displaystyle
 1_{r_i}\left[
     f_i+\sum_{j=i+1}^n e^{(x_j-x_i)\Delta}f_j
     -L_+\sum_{j=k}^n P_{a_i,b_j}^hL_-f_j
 \right],
 & $m<i\le n$.
 \label{eq:coupled-other-last}
\end{numcases}
\end{subequations}
Boundedness follows from $0\le P_{a,b}^h,Q_{a,b}^h\le e^{(b-a)\Delta}$
entrywise on nonempty intervals, and from the exponential moments of
the free kernels.

The reason for this particular system is given by the next lemma.
\begin{lemma}\label{lem:coupled-lifting}
If $\mathscr Bf=0$, then
\begin{equation}\label{eq:full-null-from-auxiliary}
 (I+K)F=0.
\end{equation}
\end{lemma}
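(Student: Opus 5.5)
The plan is to compute $(I+K)F$ block by block and show that every component vanishes once $\mathscr Bf=0$. Since $F_{j,-}=0$ and $F_{j,+}=f_j\,\ind_{\{j\ge k\}}$, only the right-hand column of each $2\times2$ block $K_{ij}$ is involved. Multiplying by $J_0$ swaps the two rows and changes one sign. Write $M_{ij}:=\mathcal S_i^*V_{ij}\mathcal S_j$. Then
\[
 (KF)_{i,-}=1_{r_i}\sum_{j\ge k}\bigl(e^{x_i\Delta}He^{x_j\Delta}+(M_{ij})_{++}\bigr)f_j ,
\]
\[
 (KF)_{i,+}=1_{r_i}\sum_{j\ge k}\bigl(\ind_{\{i<j\}}e^{(x_j-x_i)\Delta}-(M_{ij})_{-+}\bigr)f_j .
\]
Both components are computed from the grouped forms \eqref{eq:FG-terms} and \eqref{eq:positive-grouped}. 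They give
\[
 (M_{ij})_{++}=-HS_{-t,x_i}G_{j,+}-G_{i,+}^*HS_{t,x_j}+G_{i,+}^*HG_{j,+},\qquad G_{j,+}=P^h_{0,b_j}L_-.
\]
The entry $(M_{ij})_{-+}$ is either $S_{-t,-x_i}W_{a_i\text{ or }t-x_i,\,b_j}HS_{t,x_j}$ or $L_+P^h_{a_i,b_j}L_-$, according to the sign of $t-x_i$.

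The key reduction uses the witness point $y$. For any $a\le y\le b_j$, split the walk at $y$ with the Markov property. Since $Q_{y,b}^h=1_{h(y)}Q^h_{y,b}=1_\ell Q^h_{y,b}$, this gives
\[
 P^h_{a,b_j}=e^{(b_j-a)\Delta}-Q^h_{a,y}\,1_\ell Q^h_{y,b_j}.
\]
Hence the first equation \eqref{eq:coupled-other-first} of $\mathscr Bf=0$ gives
\[
 \sum_{j\ge k}P^h_{a,b_j}L_-f_j=\sum_{j\ge k}e^{(b_j-a)\Delta}L_-f_j\qquad\text{for every }a\le y,
\]
and in particular for $a=0$. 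So the sums $\sum_j G_{j,+}f_j$ become free. The same holds for the half-space factor $\sum_jT_{b_j}HS_{t,x_j}f_j$ and for every $P_{a_i,b_j}$ with $i\le m$, since $a_i\le y$ exactly for $i\le m$.

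Next I substitute these free expressions and undo the cancellations \eqref{eq:triangular-right}:
\[
 e^{-b_j\Delta}L_-=e^{-2t\nabla}e^{-x_j\Delta}\cdots,\qquad H_-^{-1}H=I\ \text{on }\ell^2_{-\eta}.
\]
Each free piece then collapses to explicit operators of the form $e^{\pm x_i\Delta}He^{x_j\Delta}$ or $e^{(x_j-x_i)\Delta}$. The claim to check is that, in the $(i,-)$ component, these free pieces cancel the $R$-entry $e^{x_i\Delta}He^{x_j\Delta}$ exactly. The quadratic term $G_{i,+}^*HG_{j,+}$ also becomes free. Its remaining non-free part is $(Q_{0,y}1_\ell Q_{y,b_i}L_-)^*$, and it is paired with $\sum_j(\cdots)f_j$, which vanishes by \eqref{eq:coupled-other-first}.

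For the $(i,+)$ component there are three ranges of $i$.
\begin{itemize}
 \item For $i\le m$, the free pieces produce $-\sum_{j}e^{(x_j-x_i)\Delta}f_j$ over all $j\ge k$. Together with $R$ this leaves $-1_{r_i}\sum_{k\le j<i}e^{(x_j-x_i)\Delta}f_j$ for $k<i\le m$, which vanishes by \eqref{eq:coupled-other-middle}. For $i\le k$ the sum is empty. For $i<k$, the $\ind_{\{i<j\}}$ term cancels fully. Any leftover upper-triangular factor $1_{r_i}L_+$ multiplies a vector supported at or below $r_i$ and is killed by \eqref{eq:triangular-annihilation}.
 \item For $i>m$, so that $a_i>y$, the $(i,+)$ component is literally $-$\eqref{eq:coupled-other-last} plus $f_i$ from the identity. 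It therefore equals zero by the last equation.
 \item The $(i,-)$ component for $i>m$ involves $W_{b_i,b_j}$, whose left endpoint is $0\le y$. So the reduction above applies there as well.
\end{itemize}

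I expect the main obstacle to be the exact algebraic cancellation in the $(i,-)$ components for the $p,q>0$ regime. There one must verify that
\[
 -HS_{-t,x_i}(\text{free})-(\text{free})^*HS_{t,x_j}+(\text{free})^*H(\text{free})
\]
reproduces $-e^{x_i\Delta}He^{x_j\Delta}$ exactly. This relies on $H^*=-H$, $(H_-^{-1})^*=-H_+^{-1}$ and the inverse identities \eqref{eq:inverse-domains}, each applied on the correct weighted space $\ell^2_{\pm\eta}$. I would first check this identity on finitely supported vectors. I would then extend it by the bounds of Lemma~\ref{lem:FV-halfline} and Proposition~\ref{prop:trace-class}, keeping careful track of which one-sided inverse is legitimate at each composition.
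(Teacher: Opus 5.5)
Your proposal is correct and is essentially the paper's argument: the first row of $\mathscr Bf=0$ gives $\sum_{j\ge k}Q^h_{a,b_j}L_-f_j=0$ for every $a\le y$, so $\sum_j G_{j,+}f_j=\sum_j T_{b_j}HS_{t,x_j}f_j=\Phi:=\sum_j S_{t,x_j}f_j$ and $\sum_j P^h_{a_i,b_j}L_-f_j$ ($i\le m$) become free, after which the components reduce to the identity and $R$ cancellations and to the remaining rows of $\mathscr B$. The step you single out as the main obstacle is in fact immediate: for any $p>0$ the terms $-T_p^*H\Phi$ and $+T_p^*H\Phi$ (your $G_{i,\pm}^*H$ terms) cancel identically without decomposing $G_{i,\pm}$, so $\sum_j W_{p,b_j}HS_{t,x_j}f_j=\Phi$ and only $S_{-t,\pm x_i}S_{t,x_j}=e^{(x_j\pm x_i)\Delta}$ plus commutation with $H$ are needed. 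In particular you need no inverse identities for $H_\pm^{-1}$, there are no leftover $1_{r_i}L_+$ terms, and you need no factorization of $Q^h_{0,b_i}$ through $y$ (which fails for $i<k$ when $b_i<y$).
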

\begin{proof}
For the convenience of discussion, we define
\[
 Z:=\sum_{j=k}^n Q_{y,b_j}^hL_-f_j,
 \qquad \Phi:=\sum_{j=k}^n S_{t,x_j}f_j.
\]
Every no-hit path at its starting position $y$ is strictly above
$h(y)=\ell$, so $Q_{y,b_j}^h=1_\ell Q_{y,b_j}^h$. Equation
\eqref{eq:coupled-other-first} therefore gives $Z=1_\ell Z=0$. For every
$0\le a\le y$, the constrained Markov property now gives
\begin{equation}\label{eq:cut-nohit-zero}
 \sum_{j=k}^nQ_{a,b_j}^hL_-f_j=Q_{a,y}^h Z=0.
\end{equation}
The identity also holds at $a=y$ and $y=0$, using
$Q_{y,y}^h=1_{h(y)}$.

Using the case $a=0$ in
\eqref{eq:cut-nohit-zero},
\begin{equation}\label{eq:sum-hit-free}
 \begin{split}
 \sum_{j=k}^nT_{b_j}HS_{t,x_j}f_j
 &=\sum_{j=k}^nP_{0,b_j}^hL_-f_j=\sum_{j=k}^ne^{b_j\Delta}L_-f_j
       -\sum_{j=k}^nQ_{0,b_j}^hL_-f_j=\Phi.
 \end{split}
\end{equation}
Here $e^{b_j\Delta}L_-=S_{t,x_j}$, since $b_j=t+x_j$.
Also $\sum_jHS_{t,x_j}f_j=H\Phi$. For any $p>0$, expanding
the definition of $W$ therefore yields
\begin{equation}\label{eq:sum-mixed-collapse}
 \begin{split}
 \sum_{j=k}^nW_{p,b_j}HS_{t,x_j}f_j
 &=\sum_jT_{b_j}HS_{t,x_j}f_j
       -T_p^*\sum_jHS_{t,x_j}f_j
       +T_p^*H\sum_jT_{b_j}HS_{t,x_j}f_j\\
 &=\Phi-T_p^*H\Phi+T_p^*H\Phi=\Phi.
 \end{split}
\end{equation}
The two terms involving $T_p^*H$ cancel after the common no-hit
equation has been used. No individual $T_p$ has been set to zero.

Only the plus components of $F$ are nonzero, so the entries used in \eqref{eqn:Vdef} are $W_{t-x_i,t+x_j}$ and $W_{t+x_i,t+x_j}$. Now we discuss all the cases.

\smallskip\noindent\emph{The minus components.}
The upper row of $J_0\mathcal S_i^*$ is
$(0,-HS_{-t,x_i})$. Its endpoint parameter is $b_i>0$. Equations
\eqref{eq:skew-kernel} and \eqref{eq:sum-mixed-collapse} give
\[
 \begin{split}
 [(I+K)F]_{i,-}
 &=1_{r_i}\sum_{j=k}^ne^{x_i\Delta}He^{x_j\Delta}f_j
                    -1_{r_i}HS_{-t,x_i}\Phi\\
 &=1_{r_i}\sum_{j=k}^n
       \bigl(e^{x_i\Delta}He^{x_j\Delta}
                    -HS_{-t,x_i}S_{t,x_j}\bigr)f_j=0.
 \end{split}
\]
The last equality uses commutation of the free convolution factors
and $S_{-t,x_i}S_{t,x_j}=e^{(x_i+x_j)\Delta}$. 

\smallskip\noindent\emph{The plus components for $i\le m$.}
If $t-x_i>0$, the lower row of $J_0\mathcal S_i^*$ and
\eqref{eq:sum-mixed-collapse}, with $p=t-x_i>0$, give the contribution
\[
 [\tilde{K}F]_{i,+}=-1_{r_i}S_{-t,-x_i}\Phi
       =-1_{r_i}\sum_{j=k}^ne^{(x_j-x_i)\Delta}f_j.
\]
If $t-x_i<0$, so $a_i=x_i-t\le y$, every interval $[a_i,b_j]$ with $j\ge k$ is
nonempty. Equation \eqref{eq:cut-nohit-zero} and
$P_{a_i,b_j}^h=e^{(b_j-a_i)\Delta}-Q_{a_i,b_j}^h$ instead give
\[
 \begin{split}
 [\tilde{K}F]_{i,+}=-1_{r_i}L_+\sum_{j=k}^nP_{a_i,b_j}^hL_-f_j
 &=-1_{r_i}L_+\sum_{j=k}^ne^{(b_j-a_i)\Delta}L_-f_j=-1_{r_i}\sum_{j=k}^ne^{(x_j-x_i)\Delta}f_j,
 \end{split}
\]
because $L_+e^{(b_j-a_i)\Delta}L_-=e^{(x_j-x_i)\Delta}$.
The case $a_i=0$ has been excluded by $t\ne x_i$.

In either case, the terms with $j>i$ cancel the free block
$R^{++}$, and, when $i\ge k$, the term with $j=i$ cancels the
identity applied to $f_i$. Thus
\begin{equation}\label{eq:early-residual}
 [(I+K)F]_{i,+}
       =-1_{r_i}\sum_{k\le j<i}e^{(x_j-x_i)\Delta}f_j,
       \qquad i\le m.
\end{equation}
For $i\le k$ this sum is empty. For $k<i\le m$ it is
$-(\mathscr Bf)_i$, hence zero.

\smallskip\noindent\emph{The plus components for $i>m$.}
Since $a_i>y\ge0$, we get:
\[
 [(I+K)F]_{i,+}
 =1_{r_i}\left[f_i+\sum_{j=i+1}^ne^{(x_j-x_i)\Delta}f_j
            -L_+\sum_{j=k}^nP_{a_i,b_j}^hL_-f_j\right]
 =(\mathscr Bf)_i=0.
\]
Empty intervals in this sum contribute zero by the stipulated
convention. This checks every component of $(I+K)F$.

\end{proof}
Now all we need to show is that the kernel of \(\mathscr{B}\) is non-empty.
\begin{proposition}\label{prop:auxiliary-null}
For $\mathscr B$ defined by
\eqref{eq:coupled-other-first}--\eqref{eq:coupled-other-last},
\begin{equation}\label{eq:auxiliary-null-dimension}
 \dim\ker\mathscr B\ge\ell-r_k>0.
\end{equation}
\end{proposition}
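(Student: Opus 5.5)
The plan is to apply Lemma~\ref{lem:FV-surplus}, or more robustly its Fredholm-index version. The idea is to pair each unknown $f_j$ with one output row so that the ``diagonal'' blocks are compressions of the type treated in Lemma~\ref{lem:FV-halfline}. The finite-dimensional mismatches between the half-lines $\HH_{r_j}$ and the target half-lines should then telescope to exactly $\ell-r_k$.

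\textbf{Pairing.} I would use the following matching of rows and unknowns.
\begin{itemize}
 \item For $k<i\le m$, match row~\eqref{eq:coupled-other-middle} with $f_{i-1}$. Its diagonal block is $1_{r_i}e^{-(x_i-x_{i-1})\Delta}$. Since $e^{-s\Delta}=e^{2sI-s\tau_+-s\tau_-}$, Lemma~\ref{lem:FV-halfline} shows that the compression of this block to $\HH_N$ is invertible for $N=\max(r_{i-1},r_i)$.
 \item Match the first row~\eqref{eq:coupled-other-first} with $f_m$. Its block is $1_\ell Q_{y,b_m}^hL_-$, where $L_-$ preserves supports above any level.
 \item For $i>m$, match row~\eqref{eq:coupled-other-last} with $f_i$. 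Its block is $1_{r_i}(I-L_+P_{a_i,b_i}^hL_-)1_{r_i}$.
\end{itemize}
Between $\HH_{r_{i-1}}$ and $\HH_{r_i}$ I split off the finite strip $\{\min+1,\dots,\max\}$ into $E_{\rm in}$ or $E_{\rm out}$. I do the same between $\HH_{r_m}$ and $\HH_\ell$. The resulting dimension count $\dim E_{\rm in}-\dim E_{\rm out}$ is
\[
 \sum_{i=k+1}^m(r_i-r_{i-1})+(\ell-r_m)=\ell-r_k ,
\]
which is positive by~\eqref{eq:FV-witness}.

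\textbf{Invertibility of the remaining block.} Next I would show that the infinite-dimensional block $B_{11}$, after these splittings, is invertible. I would proceed in two steps.

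First, treat the rows $i>m$. Because $a_i>y\ge0$ and $a_i$ increases in $i$, the hitting kernels $P^h_{a_i,b_j}$ there only see $h$ on $[a_i,b_j]$. Using the triangularity relations~\eqref{eq:triangular-annihilation} and $L_+e^{(b_j-a_i)\Delta}L_-=e^{(x_j-x_i)\Delta}$, I expect to rewrite $1_{r_i}(f_i-L_+P_{a_i,b_i}L_-f_i)$ as $1_{r_i}L_+Q_{a_i,b_i}^hL_-f_i$ plus the free terms. This is an upper-triangular $\times$ no-hit $\times$ lower-triangular product on $\HH_{r_i}$. I would then eliminate these rows by a Schur complement in decreasing order of $i$.

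Second, handle the rows $k<i\le m$ and row $k$. These form a block lower-triangular system in the ordering $f_k,\dots,f_{m-1},f_m$, because row $i$ involves only $f_j$ with $j<i$ and the diagonal block is $f_{i-1}$. The exception is row $k$, which couples to all $f_j$.

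\textbf{Main obstacle.} The hard part will be the invertibility, or at least the Fredholm property with the right index, of the row-$k$ block $1_\ell Q^h_{y,b_m}L_-$ and its coupling to the other unknowns. This block is not one of the explicit operators of Lemma~\ref{lem:FV-halfline}.

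If exact invertibility fails, I would fall back on an index argument. The plan is to deform $h$ on $(y,\infty)$ continuously (for instance, lowering it to $-\infty$ there, keeping $h(y)=\ell$) and the off-diagonal couplings to zero. Along this path the operator should remain Fredholm, and the index is a homotopy invariant. At the endpoint, every diagonal block is of the form in Lemma~\ref{lem:FV-halfline}, so the index is exactly $\ell-r_k$. Since $\dim\ker\mathscr B\ge\operatorname{ind}\mathscr B$, this gives~\eqref{eq:auxiliary-null-dimension}.

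The uniform bounds needed for the Fredholm property along the deformation come from Lemma~\ref{lem:FV-halfline}, which is uniform in $N$, together with the entrywise bounds $0\le P,Q\le e^{(b-a)\Delta}$ in the weighted space $\ell^2_\eta$ with $\eta>\eta_n$.
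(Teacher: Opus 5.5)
\textbf{There is a genuine gap, in the step you flag yourself; neither of your two routes closes it.} Your pairing of rows with unknowns and your telescoping count $\ell-r_k$ are correct. The pairing is exactly the block lower-triangular ordering the paper uses.

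\emph{The primary route fails in general.} You cut at the minimal levels $\max(r_{i-1},r_i)$ and $\max(r_m,\ell)$, but only the threshold at $x_k$ is known to be violated. For $i>m$, nothing prevents $h(a_i)>r_i$. Then $Q^h_{a_i,b_i}=1_{h(a_i)}Q^h_{a_i,b_i}$, and $1_{r_i}L_+1_{r_i}$ is an isomorphism of $\HH_{r_i}$. So your diagonal block $1_{r_i}L_+Q^h_{a_i,b_i}L_-1_{r_i}$ has range of codimension at least $h(a_i)-r_i$. Likewise, $1_\ell Q^h_{y,b_m}L_-=1_\ell Q^h_{y,b_m}1_{h(b_m)}L_-$ has a nontrivial kernel on $\HH_{\max(\ell,r_m)}$ whenever $h(b_m)>\max(\ell,r_m)$. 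So the Schur elimination cannot be carried out.

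\emph{The fallback rests on an unproved claim.} It asserts that the operator ``should remain Fredholm'' along a deformation. The estimates you cite are the uniform bounds of Lemma~\ref{lem:FV-halfline} and $0\le P,Q\le e^{(b-a)\Delta}$, and these only give boundedness. A norm-continuous path of bounded operators need not stay Fredholm. Also, integer profiles cannot be lowered continuously, so the path would really have to be an interpolation in operator space.

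\textbf{The missing ingredient is the tail estimate of Lemma~\ref{lem:tails}.} A walk from $u$ to $v$ that hits a profile bounded by $h_*=\max_{[0,b_n]}h$ needs at least $(u-h_*)_++(v-h_*)_+$ jumps. Hence every hitting contribution to $\mathscr B$ is bounded by $C_\lambda e^{-\lambda(u+v-2h_*)}$ for $u,v>N\ge h_*$. This includes $1_\ell P^h_{y,b_j}L_-$, which comes from writing $Q^h_{y,b_j}=e^{(b_j-y)\Delta}-P^h_{y,b_j}$.

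The paper exploits this by cutting every block at one common level $N\ge\max\{h_*,\ell,r_k,\dots,r_n\}$ rather than at minimal levels. The count is still $\dim E_{\rm in}-\dim E_{\rm out}=\ell-r_k$, and $B_{11}=D_N-C_N$.
\begin{itemize}
\item $D_N$ is the purely free part. It is block lower-triangular in your ordering, with diagonal blocks $1_Ne^{(x_{i-1}-x_i)\Delta}1_N$, $I$, and $1_Ne^{2t\nabla+(x_m-y)\Delta}1_N$. Their inverses are bounded uniformly in $N$ by Lemma~\ref{lem:FV-halfline}.
\item $C_N$ collects the hitting terms, and $\|C_N\|\to0$ as $N\to\infty$.
\end{itemize}
So $B_{11}$ is invertible and Lemma~\ref{lem:FV-surplus} applies.

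In your index language, the same tail bound with $\lambda>\eta$ makes the hitting part Hilbert--Schmidt on the weighted spaces. Then $\mathscr B$ is a compact perturbation of the free operator, whose index is $\ell-r_k$ by your triangular structure. Without this decay estimate, your argument does not establish the Fredholm property on which the conclusion rests.
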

\begin{proof}
Let \(h_* =\max(h(x):x\in[0,b_n]) \). Fix $N\ge\max\{h_*,\ell,r_k,\ldots,r_n\}$. Define the finite-dimensional spaces as:
\[
 E_{\rm in}=\bigoplus_{j=k}^n\ell^2_\eta(r_j,N],
 \qquad
 E_{\rm out}=\ell^2_\eta(l,N]
          \oplus\bigoplus_{i=k+1}^n\ell^2_\eta(r_i,N].
\]
The remaining input and output spaces are copies of
$H_N=\bigoplus_{j=k}^n\ell^2_\eta(N,\infty)$. Notice that \(\dim E_{\rm in}-\dim E_{\rm out} =l-r_k>0 \), which is the key point that we can find the null vector. Let $\mathscr B_{ij}(u,v)$ be the kernel of operator $\mathscr B$. Define
\begin{equation}\label{eq:four-height-blocks}
 \begin{array}{ll}
 (B_{00})_{ij}=\overline{1}^{N}\mathscr B_{ij}\overline{1}^{N},
   &(B_{01})_{ij}=\overline{1}^{N}\mathscr B_{ij}1_N,\\[1mm]
 (B_{10})_{ij}=1_N\mathscr B_{ij}\overline{1}^{N},
   &(B_{11})_{ij}=1_N\mathscr B_{ij}1_N.
 \end{array}
\end{equation}
Thus
\[
 \mathscr B=\begin{pmatrix}B_{00}&B_{01}\\B_{10}&B_{11}\end{pmatrix}
 :E_{\rm in}\oplus H_N\longrightarrow E_{\rm out}\oplus H_N.
\]
\smallskip\noindent
We prove that \(B_{11}\) is invertible
once \(N\) is sufficiently large. 
Using $Q_{y,b_j}^h=e^{(b_j-y)\Delta}-P_{y,b_j}^h$ in the
first row, and separating the hitting terms in the other rows,
gives the exact decomposition
\begin{equation}\label{eq:high-block-decomposition}
 B_{11}=D_N-C_N.
\end{equation}
where the \(D_N\) part consists of the free random walk part:
\[
 (D_N)_{ij}:=
 \begin{cases}
  1_Ne^{2t\nabla+(x_j-y)\Delta}1_N,
       &i=0,\\
  1_Ne^{(x_j-x_i)\Delta}1_N,
       &k<i\le m,\ j<i,\\
  1_Ne^{(x_j-x_i)\Delta}1_N,
       &m<i\le n,\ j\ge i,\\
  0,&\text{otherwise},
 \end{cases}
\]
and \(C_N\) consists of all the hitting contributions:
\[
 (C_N)_{ij}:=
 \begin{cases}
  1_NP_{y,b_j}^hL_-1_N,&i=0,\\
  0,&k<i\le m,\\
  1_NL_+P_{a_i,b_j}^hL_-1_N,&m<i\le n.
 \end{cases}
\]
As before, an empty hitting interval contributes zero. We will show \(D_N\) is invertible with inverse being uniformly bounded, and \(C_N\) is small.

First, order the columns and rows of $D_N$ as
\[
 \begin{array}{ll}
 \text{columns:}&
 k,\ldots,m-1,\quad n,\ldots,m+1,\quad m,\\
 \text{rows:}&
 k+1,\ldots,m,\quad n,\ldots,m+1,\quad 0,
 \end{array}
\]
In this order $D_N$ is lower block
triangular, with diagonal blocks
\[
 \begin{gathered}
  1_Ne^{(x_{i-1}-x_i)\Delta}1_N
       \quad(k<i\le m),\qquad I\quad(m<i\le n),\qquad
  1_Ne^{2t\nabla+(x_m-y)\Delta}1_N.
 \end{gathered}
\]
Indeed, the first group of rows successively determines
$f_k,\ldots,f_{m-1}$, the second group determines
$f_n,\ldots,f_{m+1}$ in reverse order, and the last row
determines $f_m$.

 Applying Lemma~\ref{lem:FV-halfline} for the diagonal terms and \eqref{eq:free-tail} for the off-diagonal free blocks, we have
\[
 \sup_N\|D_N^{-1}\|<\infty.
\]

Next, the hitting correction tends to zero in operator norm.
The cumulative hitting bound in Lemma~\ref{lem:tails}, together
with the triangularity of $L_\pm$, gives, for every $\lambda>0$,
\begin{equation}\label{eq:auxiliary-correction-tail}
 |(C_N)_{ij}(u,v)|
 \le C_\lambda e^{-\lambda(u+v-2h_*)},
 \qquad u,v>N,
\end{equation}
with $C_\lambda$ independent of $N$. Choose $\lambda>\eta$.
Summing the absolute entries over the finitely many blocks gives
\[
 \begin{aligned}
 \|C_N\|
 &\le
 \sum_{i,j}\sum_{u,v>N}
       e^{\eta(u-v)}|(C_N)_{ij}(u,v)|\le
 C_{\lambda,\eta}e^{-2\lambda(N-h_*)}
 \longrightarrow0.
 \end{aligned}
\]
Thus, \(B_{11}\) is invertible. Applying Lemma~\ref{lem:FV-surplus} and using the fact that \(\dim E_{\rm in}-\dim E_{\rm out} =l-r_k>0 \), we obtain \eqref{eq:auxiliary-null-dimension}.

\end{proof}
\begin{proof}[Proof of Theorem~\ref{thm:FV}]
Proposition~\ref{prop:auxiliary-null} and Lemma~\ref{lem:coupled-lifting}
give $(I+K)F=0$ for the vector in
\eqref{eq:coupled-embedding}. Thus, the Pfaffian is \(0\).

\end{proof}
\subsection{The initial value}\label{subsec:clusters}
We first prove a factorization of the formulas when \(x_i,x_{i+1}\) are separated by \(2t\). Because of the finite speed of propagation, notice that \(h(T,x_i)\) depends on the initial condition \(h_0\)  only through the piece \(((x_i-t)\vee 0,x_i+t)\) and also the random points in the cone region \((t,x)\in \{(t,x):0\leq t\leq T, (x_i-t)\vee 0\le x \le x_i+t\}\). When \(x_{i+1}-x_i> 2t\), \(h(t,x_i)\) and \(h(t,x_{i+1})\) are independent. This fact is illustrated by the following separation identity. 

\begin{figure}[t]
\centering
\begin{tikzpicture}[
  x=.95cm,y=1.15cm,font=\small,>=Stealth,
  line cap=round,line join=round]
\tikzset{
  cone left/.style={draw=proofblue,line width=1pt},
  cone right/.style={draw=wallorange,dashed,line width=1pt},
  cone axis/.style={black!65,->,line width=.6pt}
}

\fill[proofblue!9]
  (0,0)--(3.2,0)--(1.2,2)--(0,.8)--cycle;
\fill[proofblue!9]
  (1.7,0)--(5.7,0)--(3.7,2)--cycle;
\fill[wallorange!9]
  (7.3,0)--(11.3,0)--(9.3,2)--cycle;
\fill[wallorange!9]
  (10,0)--(14,0)--(12,2)--cycle;

\draw[cone axis] (0,0)--(14.75,0) node[right] {$y$};
\draw[cone axis] (0,0)--(0,2.9) node[above] {time};
\node[left] at (0,2) {$t$};
\node[below left] at (0,0) {$0$};

\draw[cone left] (0,.8)--(1.2,2)--(3.2,0);
\draw[cone left] (1.7,0)--(3.7,2)--(5.7,0);
\draw[cone right] (7.3,0)--(9.3,2)--(11.3,0);
\draw[cone right] (10,0)--(12,2)--(14,0);

\draw[wallorange,line width=1.2pt] (0,0)--(0,.8);

\foreach \xx/\lab in {1.2/1,3.7/2,9.3/3,12/4}{
  \fill[black!85] (\xx,2) circle (2pt);
  \node[above=4pt] at (\xx,2) {$x_{\lab}$};
}

\draw[<->,black!85,line width=.7pt]
  (3.7,2.8)--(9.3,2.8)
  node[midway,above=3pt] {$x_3-x_2>2t$};

\draw[proofblue,line width=1.2pt] (0,-.12)--(5.7,-.12);
\node[below=4pt] at (2.85,-.12) {$[0,x_2+t]$};

\draw[wallorange,line width=1.2pt] (7.3,-.12)--(14,-.12);
\node[below=4pt] at (10.65,-.12) {$[x_3-t,x_4+t]$};

\end{tikzpicture}
\caption{Backward dependence cones for four observations at time $t$.}
\label{fig:separated-dependence-cones}
\end{figure}
 
For a set of observation indices $I\subseteq\{1,\ldots,n\}$, define
$P_I$ on $\ell^2(E)\times \ell^2(E)$ by
\[
 (P_If)(j,u)=\ind_{\{j\in I\}}f(j,u).
\]
Let
\begin{equation}\label{eq:positive-determinant}
 \begin{split}
 \Fhat(t,h;I)&=\Pf(J+JP_IK(t,h)P_I),\\
 D(t,h;I)&=\det\left(
 I+P_I\bigl[K^{++}_{ij}(t,h)\bigr]_{i,j=1}^n
                       P_I\right).
 \end{split}
\end{equation}
Here $K^{++}_{ij}$ is the \((+,+)\) component of the kernel. 

\begin{lemma}[Separation of observations]\label{lem:cluster}
Suppose $1\le k<n$ and $x_{k+1}-x_{k}>2t$. Put
$I_L=\{1,\ldots,k\}$ and $I_R=\{k+1,\ldots,n\}$. Then
\begin{equation}\label{eq:separated-factorization}
 \Fhat_n(t,h)=\Fhat(t,h;I_L)D(t,h;I_R).
\end{equation}
Whenever \(t<x_i\) for all \(i\),
$\Fhat(t,h;I)=D(t,h;I)$.
\end{lemma}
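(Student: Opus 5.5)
The plan is to show that, under $x_{k+1}-x_k>2t$, the kernel $JK$ is block upper (or lower) triangular in the index split $I_L\cup I_R$ after a suitable arrangement of components. Then the Fredholm--Pfaffian factorizes into the Pfaffian of the $I_L$ block times a Fredholm determinant coming from the $I_R$ block.

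\textbf{Step 1: block structure for $i\in I_R$.} For $i\in I_R$ we have $x_i-t>x_k+t\ge t$, so $t-x_i<0$. By \eqref{eq:Wpq}, $W_{t-x_i,t-x_j}=0$ for $j\in I_R$. Also $W_{t-x_i,t\pm x_j}=T_{x_i-t,t\pm x_j}$, which vanishes when $x_i-t>t\pm x_j$. For $j\in I_L$ this holds, since $x_i-t>x_k+t\ge x_j+t$. Hence, for $i\in I_R$ and $j\in I_L$, the entries $W_{t-x_i,\cdot}$ vanish. By skew symmetry $W_{p,q}^*=-W_{q,p}$, so do $W_{\cdot,t-x_i}$.

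\textbf{Step 2: surviving couplings.} I would tabulate the remaining entries. The coupling between $I_L$ and $I_R$ should survive only through components pairing the $(+)$ slot of $I_L$ (endpoint $t+x_j$) with the $(-)$ slot of $I_R$, or through $W_{t+x_i,t+x_j}$ and the $R$ entries. In the first form the $2\times2$ matrix $J_0\mathcal S_i^*V_{ij}\mathcal S_j$ mixes slots. I therefore expect the right bookkeeping to be the grouped form \eqref{eq:FG-terms}--\eqref{eq:positive-grouped}: for $i\in I_R$ only the $(+,\cdot)$ row of $K_{i\cdot}$ is nonzero outside the $R$-part, with the $J_0$ swap taken into account. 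The target statement is the following. Order the basis as ($I_L$ all components), ($I_R$ plus components), ($I_R$ minus components). Then the $(I_R^-,I_R^-)$ block of $JK$ and the $(I_R^-,I_L)$ block vanish, giving a structure
\[
JK=\begin{pmatrix} A_{LL} & A_{L+} & A_{L-}\\ * & 0 & X\\ 0 & -X^* & 0\end{pmatrix}
\]
up to signs. Here $X$ is the block $K^{++}_{I_R}$ (shifted appropriately by $J$), and the zero in the lower-left corner comes from Step 1. The case ``$t<x_i$ for all $i$'' is the special case $I_L=\emptyset$, where only the $X$, $-X^*$ blocks remain.

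\textbf{Step 3: Pfaffian identity.} I would use the elementary Pfaffian fact that for a skew matrix of this shape, $\Pf\begin{pmatrix}A&B&0\\-B^*&0&X\\0&-X^*&0\end{pmatrix}$-type structures give $\Pf(A')\det(X)$. Concretely, with an invertible $I+X$ block one performs a congruence (row/column operations $g^T(\cdot)g$ with $\det g=1$) to eliminate $A_{L\pm}$ against the $I_R$ blocks. One then reads off $\Pf(J_L+JK_{LL})\cdot\det(I+X)$ with $X=P_{I_R}K^{++}P_{I_R}$. I would prove this at the level of finite truncations, first for the weighted trace-class kernel $A=\mathcal M JK\mathcal M$ of Proposition~\ref{prop:trace-class}, since conjugation by the diagonal $\mathcal M$ preserves both sides. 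I would then pass to the limit using continuity of Fredholm Pfaffians and determinants in trace norm. The identity $\Pf\begin{pmatrix}0&I+X\\-(I+X)^*&0\end{pmatrix}=\pm\det(I+X)$ handles the pure $I_R$ case; the sign convention should be fixed by checking $K=0$.

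\textbf{Main obstacle.} The main obstacle is Step 2: verifying exactly which entries vanish after the $J_0$ twist and the $R$-part. In particular, I need that the remaining off-diagonal coupling sits in a block that the triangular elimination kills, rather than in the $(I_R^+,I_R^+)$ slot. I would first check $n=2$, $k=1$ by hand. If the desired zero pattern fails, I would try the alternative of proving $\Fhat(t,h;\cdot)$ factorizes probabilistically. That route would use that both sides satisfy the same integral Kolmogorov equation, but it is circular at this stage, so the algebraic route is the one to push.
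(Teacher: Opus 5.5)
Your strategy is the paper's. Step~1 is exactly its computation: $W_{t-x_i,t-x_j}=0$ for every $j$, and $W_{t-x_i,t+x_j}=T_{x_i-t,t+x_j}=0$ for $j\in I_L$. The zero pattern you state in words is also precisely the one the paper uses: for $i\in I_R$, the rows $(i,-)$ of $J+JK$ vanish in all minus columns and in the plus columns of $I_L$. The $R$-part, which you left unchecked, takes one line. We have $(J_0R_{ij})^{--}=R_{ij}^{+-}=0$ and $(J_0R_{ij})^{-+}=R^{++}_{ij}=\ind_{\{i<j\}}1_{r_i}e^{(x_j-x_i)\Delta}1_{r_j}$, which is zero for $i\in I_R$, $j\in I_L$. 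Likewise $J^{--}=0$ and $J^{-+}_{ij}=0$ for $i\ne j$.

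The rest of your Step~2 bookkeeping is wrong, and that is what makes Step~3 look doubtful to you.
\begin{itemize}
\item The $I_L^+$--$I_R^-$ coupling is exactly the one Step~1 kills. The cross-coupling that survives is $I_R^+$--$I_L$. It comes from $W_{t+x_i,t\pm x_j}$ and from $(J_0R_{ij})^{+-}=-\ind_{\{j<i\}}1_{r_i}e^{(x_i-x_j)\Delta}1_{r_j}$.
\item In your display, the $(I_L,I_R^-)$ block must vanish by skew symmetry.
\item The $(I_R^+,I_R^+)$ block is generally nonzero. It equals $-1_{r_i}\bigl[e^{x_i\Delta}He^{x_j\Delta}+(HS_{t,x_i})^*W_{t+x_i,t+x_j}HS_{t,x_j}\bigr]1_{r_j}$.
\end{itemize}
The correct shape, in the order $(I_L,I_R^+,I_R^-)$, is
\[
 J+JK=\begin{pmatrix} M&Y&0\\ -Y^*&C&-B^*\\ 0&B&0\end{pmatrix},\qquad B=I+[K^{++}_{ij}]_{i,j\in I_R}.
\]

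The issue you list as the main obstacle is resolved not by showing $C=0$, but by observing that $C$ and $Y$ do not matter. Take any paired finite restriction, keeping both components at each basepoint as in the Fredholm expansion. Each $I_R^-$ index can only be matched with an $I_R^+$ index, and the two sets have the same size. Hence every nonzero matching pairs $I_R^-$ with $I_R^+$ bijectively and pairs $I_L$ among itself. This gives $\Pf=\pm\Pf(M)\det B$, with the same sign as for $J$ itself, so it cancels after normalization.

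This matching argument is how the paper finishes. It needs no invertibility of $B$, unlike your congruence. The congruence does work on the correct shape, but you then need a density or polynomial-identity argument when $B$ is singular. The case $t<x_i$ for all $i$ is the same argument with no $I_L$ block, and again $C\neq0$ in general.
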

Figure~\ref{fig:separated-dependence-cones} illustrates how a gap
greater than $2t$ separates the backward dependence cones into
two groups.
\begin{proof}
Let $I_L=\{1,\ldots,k\}$ and $I_R=\{k+1,\ldots,n\}$, with
$x_{k+1}-x_k>2t$, and first suppose that $t\ne x_j$ for every
$j$. Choose finitely many basepoints $(i,u)$ and retain both
components $(i,u,-)$ and $(i,u,+)$ at each chosen basepoint.
This gives a paired finite principal restriction of $J+JK$.

We first identify the zero blocks that determine its possible
Pfaffian pairings. For $i\in I_R$, the gap assumption gives
\[
 x_i>x_k+2t\ge2t,
 \qquad t-x_i<0.
\]
For every observation $j$, the folded-kernel definition therefore
implies
\[
 W_{t-x_i,t-x_j}
 =
 \begin{cases}
  0,&x_j>t,\\
  T_{x_i-t,t-x_j}=0,&x_j<t.
 \end{cases}
\]
In the second case, the interval is empty because
$x_i-t>t\ge t-x_j$. So \( (J+JK)^{--}_{ij}=
 0, i\in I_R, 1\le j\le n.\)

Next, if $i\in I_R$ and $j\in I_L$, then \( i>j, x_i-t>x_j+t.\)
Thus, $W_{t-x_i,t+x_j}=T_{x_i-t,t+x_j}=0$. Since the contribution from kernel \(R\) is also \(0\), we have \( (J+JK)^{-+}_{ij}=0\). Consequently, every nonzero pairing in the Pfaffian series expansion consists
entirely of pairs within the left group and pairs within the
right group.

Reordering the entries in the Pfaffian indexed by \(i,j\in I_R\), it has the form
\[
 \begin{pmatrix}0&B\\-B^*&C\end{pmatrix},\qquad
 B=I+[K^{++}_{ij}]_{i,j\in I_R}.
\]
If \(|I_R|=q\), its Pfaffian is
$(-1)^{q(q-1)/2}\det B$. The same reordering gives that sign for
$J$, so the normalized factor is $\det B$, independently of $C$.

When \(x_i>t\) for all \(i\), \((J+JK)_{ij}^{--}= 0\) for all \(1\le i,j\le n\). Thus the same argument gives $\Fhat(t,h;I)=D(t,h;I)$. 
\end{proof}

\begin{proposition}\label{thm:initial}
For $0\le c<1$ and every $h\in\cX_{\mathrm{fin}}$,
\begin{equation}\label{eq:init-result}
 \lim_{t\downarrow0}\Fhat_n(t,h)
 =\prod_{j=1}^n\ind_{\{h(x_j)\le r_j\}}.
\end{equation}
\end{proposition}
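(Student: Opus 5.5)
We reduce to one observation point at a time. For $t$ small enough, $2t<\min_i(x_{i+1}-x_i)$, so Lemma~\ref{lem:cluster} applies at every gap. Applying it repeatedly (first with $k=1$, then inside each remaining right group, whose points all satisfy $x_i>t$ once $t<x_2$) gives
\[
 \Fhat_n(t,h)=\Fhat(t,h;\{1\})\prod_{j=2}^nD(t,h;\{j\}),
\]
with the convention that if $x_1>0$ then also $\Fhat(t,h;\{1\})=D(t,h;\{1\})$ for $t<x_1$. It therefore suffices to show two one-point limits as $t\downarrow0$:
\begin{itemize}
 \item $D(t,h;\{j\})\to\ind_{\{h(x_j)\le r_j\}}$ when $x_j>0$;
 \item $\Fhat(t,h;\{1\})\to\ind_{\{h(0)\le r_1\}}$ when $x_1=0$.
\end{itemize}

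\emph{The case $x_j>0$, $t<x_j$.} Here $D=\det(I+K^{++}_{jj})$. By \eqref{eq:skew-kernel} and \eqref{eq:positive-grouped}, the $(+,+)$ entry is $-1_{r_j}L_+P^h_{x_j-t,x_j+t}L_-1_{r_j}$, since $R^{++}_{jj}=0$ and the sign comes from $J_0$. This is a full-space-type Fredholm determinant.

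As $t\downarrow0$, the interval $[x_j-t,x_j+t]$ shrinks to $\{x_j\}$. For small $t$, $h$ is constant near $x_j$ except possibly at $x_j$ itself, and upper semicontinuity forces $h$ to equal $h(x_j)$ there once isolated peaks are accounted for. Hence $P^h_{x_j-t,x_j+t}\to\overline1^{h(x_j)}$ entrywise. We plan to prove this in trace norm after the conjugation of Proposition~\ref{prop:trace-class}, using the tail bounds of Lemma~\ref{lem:tails} uniformly in small $t$. We also have $L_\pm\to I$.

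The limiting determinant is $\det(I-1_{r_j}\overline1^{h(x_j)}1_{r_j})$. If $h(x_j)\le r_j$, this operator is zero and the determinant is $1$. If $h(x_j)>r_j$, it is the identity on $\{r_j<u\le h(x_j)\}$, so the determinant is $0$. Isolated peaks need care, since $P$ with an isolated peak at $x_j$ still records the hit at $x_j$. This is consistent with the target, because the limit is taken at fixed $h$ and the indicator uses $h(x_j)$. Alternatively, for $h(x_j)>r_j$ one can invoke Theorem~\ref{thm:FV} directly: $(t,h)\notin\cA_{\boldsymbol r}$ for every $t>0$, so $\Fhat_n=0$ identically.

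\emph{The wall point $x_1=0$.} If $h(0)>r_1$, Theorem~\ref{thm:FV} again gives $0$. Otherwise, assume $h\le r_1$ near $0$. Then
\[
 V_{11}=\begin{pmatrix}W_{t,t}&W_{t,t}\\W_{t,t}&W_{t,t}\end{pmatrix},\qquad W_{t,t}=T_t-T_t^*+T_t^*HT_t.
\]
We need $\Pf(J+JK_{11})\to1$. Using the grouped form \eqref{eq:FG-terms}, the projected entries involve $G_{1,\pm}=P^h_{0,t}L_-(\cdots)$, and $P^h_{0,t}\to\overline1^{h(0)}$. The triangularity relations \eqref{eq:triangular-annihilation}, combined with $1_{r_1}$ on the outside and $h(0)\le r_1$, should kill every term containing $G$ in the limit.

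What remains is $R_{11}+1_{r_1}J_0\bigl(S$-only terms$\bigr)1_{r_1}$. From \eqref{eq:FG-terms}, the $S$-only pieces are:
\begin{itemize}
 \item the first row, $S_{-t,0}G-G^*S_{t,0}$, which vanishes after projection;
 \item $R^{-+}$, namely $1_{r_1}H1_{r_1}$, which must cancel against the $-G_{i,-}^*HS_{t,0}$-type limits.
\end{itemize}
We expect these to cancel exactly, so that $JK_{11}\to0$ in trace norm and the Pfaffian tends to $1$.

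\emph{Main obstacle.} The hard step is justifying the trace-norm convergence as $t\downarrow0$. In particular, the $H$-containing terms are only conditionally small, because $H$ has non-decaying entries. We would handle them with the weights $\mathcal M$ and the inverse identities \eqref{eq:inverse-domains}. We would first verify the cancellation at the level of kernels with $h$ constant near $0$, and then control the remainders through the tail bounds.
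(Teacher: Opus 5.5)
Your reduction is the same as the paper's: Theorem~\ref{thm:FV} for the forbidden case, the factorization from Lemma~\ref{lem:cluster}, and the interior factors $\det(I-1_{r_j}L_+P^h_{x_j-t,x_j+t}L_-1_{r_j})\to1$ via $P^h_{x_j-t,x_j+t}\to\overline1^{h(x_j)}$ and dominated convergence with \eqref{eq:direct-tail}.

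The gap is in the wall factor $x_1=0$. You assert that $R^{-+}_{11}=1_{r_1}H1_{r_1}$ cancels against the limit of the $-G_{1,-}^*HS_{t,0}$ term, so that $JK_{11}\to0$. No such cancellation exists.

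\begin{itemize}
\item Every term on the right of \eqref{eq:FG-terms} contains a factor $G_{1,\pm}$ or $G_{1,\pm}^*$. There are no ``$S$-only'' pieces.
\item As $t\downarrow0$, $G_{1,+}1_{r_1}\to\overline1^{h(0)}1_{r_1}=0$ and $G_{1,-}1_{r_1}\to\overline1^{h(0)}H_-^{-1}1_{r_1}=0$, by the triangularity you cite.
\item The uniform majorant \eqref{eq:scat-tail} turns this into $\|\mathcal M_1J_0\widetilde K_{11}\mathcal M_1\|_1\to0$.
\item This includes the term $G_{1,-}^*HS_{t,0}$, whose left factor $1_{r_1}G_{1,-}^*=(G_{1,-}1_{r_1})^*$ tends to zero. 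Your own preceding sentence already says this.
\end{itemize}

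Hence $JK_{11}\to J_0R_{11}=\begin{pmatrix}0&0\\0&-1_{r_1}H1_{r_1}\end{pmatrix}\neq0$. The claim $JK_{11}\to0$ is false, and an attempt to verify the ``exact cancellation'' would fail.

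What you are missing is that the limit does not need to vanish. The kernel $J_0R_{11}$ has zero $(-,-)$, $(-,+)$ and $(+,-)$ blocks. So in every finite Pfaffian $\Pf([A(z_a,z_b)])$ with $m\ge1$ in \eqref{eq:pf-definition}, all rows indexed by minus components are zero, and the Pfaffian vanishes. Therefore $\Pf(J_0+J_0R_{11})=1$. Equivalently, $\Pf^2=\det(I+R_{11})=1$ because $R_{11}^2=0$.

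The weighted limit $\mathcal M_1J_0R_{11}\mathcal M_1$ is trace class, since $|H(u,v)|\le\tfrac12e^{\omega|u-v|}$ and $\omega<\eta_1$. Trace-norm continuity of the Fredholm Pfaffian then gives the limit $1$. This is exactly the paper's argument.

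Your worry that the $H$-terms are only ``conditionally small'' is also unfounded. The bounds \eqref{eq:G-tail} and \eqref{eq:scat-tail} give absolutely summable weighted majorants, uniform in small $t$. Dominated convergence therefore suffices, and the inverse identities \eqref{eq:inverse-domains} are not needed.
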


\begin{proof}
If $h(x_k)>r_k$ for some $k$, then
\[
 r_0(t,x_k;h)\ge h(x_k)>r_k \text{ for all } t>0.
\]
Theorem~\ref{thm:FV} gives $\Fhat_n(t,h)=0$. Thus, now we suppose that $h(x_j)\le r_j$ for every
$j$.

Upper semicontinuity and the discreteness of the heights allow
us to choose $t_0>0$ such that
\[
 \begin{gathered}
 t_0<x_j\quad\text{whenever }x_j>0,\qquad
 2t_0<x_{j+1}-x_j,\\
 h(y)\le r_j
 \quad\text{for }y\in[(x_j-t_0)\vee0,x_j+t_0],
 \qquad 1\le j\le n.
 \end{gathered}
\]
Thus, for $0<t<t_0$, all \([x_i-t,x_i+t]\) are disjoint.

By Lemma~\ref{lem:cluster}, we get
\begin{equation}\label{eq:small-time-factorization}
 \Fhat_n(t,h)
 =
 \Pf(J_0+J_0 K_{11})\prod_{j=2}^n
 \det(I+K_{jj}^{++})
\end{equation}
If \(x_1>0\), the first Pfaffian has the same form as other factors; if \(x_1=0\), it needs a separate discussion.

For each determinant factor, 
\begin{equation}\label{eq:initial-singleton-det}
      \det(I+K_{jj}^{++})=\det(I-1_{r_j}L_+P_{x_j-t,x_j+t}^hL_-1_{r_j}).
\end{equation}
As \(t\downarrow 0\), using the upper semicontinuity of $h$ at $x_j$ we have
\[
 P_{x_j-t,x_j+t}^h(u,v)
 \longrightarrow
 \ind_{\{u=v\le h(x_j)\}}.
\]
Since $L_\pm(t)\to I$, we have the pointwise limit
\[
 1_{r_j}L_+P_{x_j-t,x_j+t}^hL_-1_{r_j}
 \longrightarrow
 1_{r_j}\overline1^{h(x_j)}1_{r_j}=0.
\]
To pass to trace norm, the estimate \eqref{eq:direct-tail} bounds
the absolute entries by
\[
 C_\lambda e^{-\lambda(u+v)},
 \qquad u,v>r_j,
\]
uniformly for $0<t<t_0$. This bound is summable over \((r_j,\infty)\). Dominated convergence gives
\[
 \left\|
 1_{r_j}L_+P_{x_j-t,x_j+t}^hL_-1_{r_j}
 \right\|_1\longrightarrow0.
\]
Continuity of the Fredholm determinant in
\eqref{eq:initial-singleton-det} shows that every factor with
$x_j>0$ tends to $1$.

It remains to consider the case $x_1=0$. Notice that the four entries in \(V_{11}\), defined in \eqref{eqn:Vdef}, are all \(W_{t,t}\). Thus,
\[
 P_{0,t}^h(u,v)
 \longrightarrow
 \ind_{\{u=v\le h(0)\}}.
\]
Using the operators \(L_-,H_{-}^{-1}\) and their support properties, when \(h(0)\leq r_1\)
\[
 \begin{aligned}
 P_{0,t}^hL_-1_{r_1}
 &\longrightarrow
 \overline1^{h(0)}1_{r_1}=0,\\
 P_{0,t}^hL_-H_-^{-1}1_{r_1}
 &\longrightarrow
 \overline1^{h(0)}H_-^{-1}1_{r_1}=0.
 \end{aligned}
\]

Every term of \(\tilde{K}_{11}\) contains
one of these factors or its transpose, as displayed in
\eqref{eq:FG-terms}. The common projection $1_{r_1}$ and proof of Proposition~\ref{prop:trace-class} justify the limits
through the internal sums and give
\[
 \left\|
 \mathcal M_1J_0\widetilde K_{11}(t,h)\mathcal M_1
 \right\|_1\longrightarrow0.
\]
The remaining part of the kernel is \(J_0R_{11}
 =
 \begin{pmatrix}
  0&0\\
  0&-1_{r_1}H1_{r_1}
 \end{pmatrix}.\)
By trace-norm continuity, the Fredholm Pfaffian \(\Pf(J_0+J_0 K_{11})\) goes to $1$. 

All factors in \eqref{eq:small-time-factorization} thus tend
to $1$ when $h(x_j)\le r_j$ for every $j$. Together with the
forbidden case treated at the beginning, this proves
\eqref{eq:init-result}.
\end{proof}

\subsection{A uniform bound}
We need to establish a uniform bound that is independent of the
initial profile, which will be used in the later uniqueness argument.
\begin{proposition}\label{thm:uniform}
For fixed $T,\boldsymbol x,\boldsymbol r$ and $0\le c<1$, there is
$C<\infty$ independent of \(t,h\) such that
\begin{equation}\label{eq:uniform-F}
 |\Fhat_n(t,h)|\le C\qquad
       (0\le t\le T,\ h\in\cX_{\mathrm{fin}}).
\end{equation}
\end{proposition}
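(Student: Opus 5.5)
The plan is to bound the Fredholm Pfaffian by a Hadamard-type estimate in terms of the trace norm of the conjugated kernel $A=\mathcal M(JK)\mathcal M$ of Proposition~\ref{prop:trace-class}. The analytic input I will use is
\[
 |\Pf(J+A)|\le \exp\bigl(C\|A\|_1\bigr).
\]
One route is $\Pf(J+A)^2=\det(I-JA)$ together with $|\det(I+B)|\le e^{\|B\|_1}$; alternatively one applies the Hadamard bound directly to the series \eqref{eq:pf-definition}. Conjugation by $\mathcal M$ does not change the Pfaffian. So it suffices to prove
\[
 \sup_{0\le t\le T,\ h\in\cX_{\mathrm{fin}}}\|A(t,h)\|_1<\infty .
\]
The times $t=0$ and $t=x_j$, where the Pfaffian is defined as a right limit, inherit any such bound, because the bound passes to limits.

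First I split $K=R+\widetilde K$. The block $R$ does not depend on $h$. Since the $\eta_i$ are increasing, the weights $e^{\eta_iu}e^{-\eta_jv}$ on $u>r_i$, $v>r_j$ make the free kernels $e^{(x_i-x_j)\Delta}$ and $e^{x_i\Delta}He^{x_j\Delta}$ trace class, by the same estimate as in the appendix. It remains to control $\widetilde K$, using the grouped forms \eqref{eq:FG-terms} and \eqref{eq:positive-grouped}. Every entry there is a finite sum of products built from three kinds of factors. The first kind is $h$-independent: $S_{\pm t,\pm x}$, $H$, $L_\pm$, $H_-^{-1}$. The second kind is a hitting kernel $P^h_{a,b}$ with $0\le a\le b\le x_n+T$. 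The third kind is a $G_{j,\pm}$, possibly transposed and sandwiched with $H$.

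The key pointwise fact is $0\le P^h_{a,b}(u,v)\le e^{(b-a)\Delta}(u,v)$, and this holds for every $h$. I will therefore bound each entry of $\widetilde K$ in absolute value by the same expression with every $P^h$ replaced by the free kernel $e^{(b-a)\Delta}$ and every signed operator replaced by its entrywise absolute value. The resulting majorant no longer depends on $h$, and it depends continuously on $t\in[0,T]$. Its weighted trace norm (or Hilbert--Schmidt norm, factorizing as in the appendix) is then bounded uniformly in $t$ by the appendix estimates. Positivity makes this domination legitimate at the level of kernels.

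The main obstacle is that majorizing $P^h$ by $e^{(b-a)\Delta}$ discards the triangular cancellations. The projection $1_{r_j}$ and the lower triangularity of $L_-H_-^{-1}$ are exactly what give decay in $u,v\to+\infty$, and that decay is what makes the weighted kernel summable. Note that $H_-^{-1}$ and $H$ are not positive, and $H$ does not decay in one direction. For this I will use the tail estimates of Lemma~\ref{lem:tails}, namely \eqref{eq:direct-tail} and \eqref{eq:free-tail}, as in the proofs of Propositions~\ref{prop:auxiliary-null} and~\ref{thm:initial}. Those estimates bound the entries of $1_{r_i}L_+P^h_{a,b}L_-1_{r_j}$ and of $P^h L_-H_-^{-1}1_{r_j}$ by $C_\lambda e^{-\lambda(u+v-2h_*)}$. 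The constant depends on $h$ only through $h_*$, and I need to remove that dependence. For $(t,h)\notin\cA_{\boldsymbol r}$ the Pfaffian vanishes by Theorem~\ref{thm:FV}. On $\cA_{\boldsymbol r}$ we have $h\le r_j$ on the relevant windows. Replacing $h_*$ by $\max_j r_j$ restores uniformity, except possibly on pieces of $[0,x_n+T]$ outside all windows $[(x_j-t)_+,x_j+t]$. My first attempt will be to check that the hitting kernels entering $K$ only see $h$ on $[(x_i-t)_+,x_j+t]$. Where such a piece lies between windows, I will use $P^h\le e^{(b-a)\Delta}$ there, and the decay in $u,v$ then comes only from the factors $L_\pm$ at the ends. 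If that fails, I would fall back on monotonicity: raising $h$ only enlarges $P^h$, and $P^h$ is dominated by $e^{(b-a)\Delta}$ regardless. So the weighted entrywise bound reduces to the two extreme cases $h\equiv-\infty$ and the free kernel, and both can be handled uniformly in $t\le T$.
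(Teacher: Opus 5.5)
There is a genuine gap, and it sits exactly where the proposition has content: uniformity in $h$ on $\cA_{\boldsymbol r}$ when some consecutive points satisfy $x_{k+1}-x_k>2t$.

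You rightly note that allowedness bounds $h$ only on the windows $[(x_j-t)\vee0,x_j+t]$. However, the full kernel still contains entries whose hitting intervals cross such a gap. Examples are $1_{r_i}L_+P^h_{x_i-t,x_j+t}L_-1_{r_j}$ with $i\le k<j$, and every term containing $G_{j,+}=P^h_{0,t+x_j}L_-$, which sees $h$ on all of $[0,t+x_j]$. On the gap, $h$ is unconstrained, and neither of your remedies handles these entries.

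\begin{itemize}
\item \emph{Free majorant.} After replacing $P^h$ by $e^{(b-a)\Delta}$, no decay in $u+v$ can come from $L_\pm$. They are translation-invariant triangular convolutions. Indeed $1_{r_i}L_+e^{(b-a)\Delta}L_-1_{r_j}=1_{r_i}e^{(x_j-x_i)\Delta}1_{r_j}$ is of order one along the diagonal.
\item \emph{Monotonicity fallback.} The entries are signed and quadratic in $P^h$, through $T_p^*HT_q$ and the signed factors $L_\pm,H,H_-^{-1}$. So they are not monotone in $h$, and the two extreme profiles do not bracket intermediate ones.
\item \emph{The all-hitting extreme.} This is exactly where the majorant blows up. $L_-H_-^{-1}$ has non-decaying entries below the diagonal and $|H_c(z,z+1)|=\tfrac12$. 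Hence the free majorant of $G_{i,-}^*HG_{j,-}$ is a divergent double sum, formally $(H_-^{-1})^*HH_-^{-1}$.
\end{itemize}

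The paper never bounds the full kernel. It invokes Lemma~\ref{lem:cluster}. A gap $x_{k+1}-x_k>2t$ forces $W_{t-x_i,t-x_j}=0$ for $i>k$, and empty intervals $x_i-t>x_j+t$ for $i>k\ge j$. Consequently $\Fhat_n=\Fhat(t,h;I_L)\prod_kD(t,h;I_k)$, and the gap-crossing entries never enter the Pfaffian pairings. Inside a cluster, consecutive windows overlap, so every remaining hitting interval lies in a union of windows on which $h\le r_{\max}$. Proposition~\ref{prop:trace-class}, \eqref{eq:direct-tail} and \eqref{eq:free-heat-sum}, with ceiling $r_{\max}$, then bound each factor's trace norm by some $C_*$ independent of $h$. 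Finally $|\Pf(J+A)|\le e^{\|A\|_1/2}$ and $|\det(I+D)|\le e^{\|D\|_1}$ give $|\Fhat_n|\le e^{nC_*}$.

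Your direct route could be salvaged, but only by proving the same block structure.
\begin{itemize}
\item Gap-crossing entries must vanish in the $(-,-)$ slot of $JK$, which carries weight $e^{\eta_iu+\eta_jv}$.
\item They must also vanish in the mixed slot with weight $e^{\eta_iu-\eta_jv}$ when $i>j$, and in its transpose.
\item The surviving gap-crossing entries then sit where $\eta_i<\eta_j$, or where the weight $e^{-\eta_iu-\eta_jv}$ makes a translation-invariant majorant summable.
\item The wall factors $G_{i,-}$ are still estimated through the ceiling.
\end{itemize}
That weight bookkeeping, not decay from $L_\pm$, is the missing argument.
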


\begin{proof}
First suppose $t>0$ and $t\ne x_j$ for every $j$.
Theorem~\ref{thm:FV} gives zero outside the allowed set \(\mathcal{A}_{\boldsymbol{r}}\).
For an allowed \(h\), apply Lemma~\ref{lem:cluster} whenever there exists \(x_k\) such that \(x_{k+1}-x_k>2t\). So we get 
\begin{equation}\label{eqn:Fhatfactor}
      \Fhat_n(t,h)=\Fhat(t,h;I_L)\prod_{k}D(t,h;I_k).
\end{equation}
where \(I_k\) is the index set for each group. 

Let $r_{\max}=\max_j r_j$. Proposition~\ref{prop:trace-class}, with ceiling \(h_* =r_{\max}\), bounds the
trace norm of the kernel in the first Pfaffian.
For each Fredholm determinant factors, the bounds \eqref{eq:direct-tail} and 
\eqref{eq:free-heat-sum} provide the trace norm bounds as well. Notice that only heights on these hitting
intervals enter the estimates, and these heights are bounded by \(r_{\max}\), and heights in the gaps between groups are unrestricted, thus the bound is uniform in \(h\).

Since there are at most \(n\) factors in \eqref{eqn:Fhatfactor}, we let $C_*$  be the same bound for the kernels in each factor. The inequalities
$|\Pf(J+A)|\le e^{\|A\|_1/2}$ and
$|\det(I+D)|\le e^{\|D\|_1}$ give
\[
 |\Fhat_n(t,h)|\le e^{nC_*}.
\]
If $t=x_j>0$, we have the same bound by taking a right limit of \(t\). Finally, Proposition~\ref{thm:initial} gives
$|\Fhat_n(0,h)|\le1$. This proves \eqref{eq:uniform-F}.
\end{proof}

\section{Verification and identification of the Pfaffian formula}\label{sec:weak}
In this section, we first introduce the integral formulation of the Kolmogorov equation in Proposition~\ref{prop:mild-unique} and establish its uniqueness. We then show that our formula satisfies the equation. What one needs to show is that the integrand is continuous at a non-differentiable point, see Proposition~\ref{thm:transmission}. The Bounded uniqueness and analytic continuation prove
Theorem~\ref{thm:multipoint} for all boundary rates. 

\subsection{Finite propagation and bounded uniqueness}
Fix
\begin{equation}\label{eq:weak-fixed-data}
 T>0,\qquad B>x_n+T+1.
\end{equation}
Let $\cX_B$ be the finite-valued, integer-valued upper-semicontinuous
profiles on $[0,B]$ that are constant on the open intervals of a finite
partition. At a partition point an isolated upper value is allowed.
Thus finite stacks and peaks, including at the endpoints, are included.
Define
\begin{equation}\label{eq:finite-flow}
 (\phi_s^B h)(x)=\sup_{\substack{y\in[0,B]\\|y-x|\le s}}h(y).
\end{equation}
The artificial finite-volume process alternates this flow with creation
maps $C_z$, at the marks of a Poisson random measure with intensity
$ds\,\mu_B(dz)$, where
\begin{equation}\label{eq:birth-measure}
 \mu_B(dz)=2\ind_{(0,B)}(z)\,dz+c\delta_0(dz),\qquad
 \lambda_B=\mu_B([0,B))=2B+c.
\end{equation}
Write $h^B(t;h)$ for this process and $\mathbb E_h^{B,c}$ for its
expectation. There is no source or incoming influence at $B$.
It is not the restriction of the half-space process on all of $[0,B]$;
nevertheless finite propagation gives, under the common-mark coupling,
\begin{equation}\label{eq:finite-propagation-coupling}
 h^B(s;h_0|_{[0,B]})(x)=h(s;h_0)(x)\qquad(x+s<B).
\end{equation}
The cutoff serves only to make the total nucleation rate finite.

\begin{proposition}\label{prop:mild-unique}
For every bounded Borel function $f$ on $\cX_B$ and every $c\ge0$,
the function $U(t,h)=\mathbb E_h^{B,c}[f(h^B(t;h))]$ is the unique
bounded Borel solution on $[0,T]\times\cX_B$ of
\begin{equation}\label{eq:mild}
 \begin{split}
 U(t,h)={}&e^{-\lambda_Bt}f(\phi_t^B h)+\int_0^t e^{-\lambda_Bs}
      \int_{[0,B)}U(t-s,C_z\phi_s^B h)\,\mu_B(dz)\,ds.
 \end{split}
\end{equation}
Boundedness means a single bound over both time and the entire profile
space.
\end{proposition}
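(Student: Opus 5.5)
This is the standard first-jump (Dynkin) decomposition of a pure-jump process with finite total jump rate, followed by a Picard contraction argument. The proof has two parts: existence (that $U$ solves \eqref{eq:mild}) and uniqueness among bounded Borel solutions.

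\emph{Existence.} Realize $h^B(\cdot;h)$ by the graphical construction. Take a Poisson random measure on $[0,\infty)\times[0,B)$ with intensity $ds\,\mu_B(dz)$. Its first mark $(\sigma,Z)$ has $\sigma\sim\mathrm{Exp}(\lambda_B)$, and $Z\sim\mu_B/\lambda_B$ independently of $\sigma$.

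\begin{itemize}
\item On $\{\sigma>t\}$, which has probability $e^{-\lambda_Bt}$, there are no nucleations, so $h^B(t;h)=\phi^B_t h$. This gives the first term of \eqref{eq:mild}.
\item On $\{\sigma=s\le t\}$, the state just after the first mark is $C_Z\phi^B_s h$.
\item By the strong Markov property, or directly by the independence of Poisson marks on disjoint time sets, the shifted marks on $(s,\infty)$ form an independent copy of the random measure. Hence the conditional expectation of $f(h^B(t;h))$ equals $U(t-s,C_Z\phi^B_s h)$.
\item Integrating against the joint density $\lambda_Be^{-\lambda_Bs}\,ds\cdot\mu_B(dz)/\lambda_B$ yields the integral term.
\end{itemize}

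Measurability needs a separate check. The maps $(s,h)\mapsto\phi^B_sh$ and $(z,h)\mapsto C_zh$ must be Borel on $\cX_B$ with the hypograph topology. Then $U$ is Borel, being the expectation of a Borel function of $(h,\text{marks})$, and Fubini applies. We have $|U|\le\|f\|_\infty$ trivially.

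One could also justify this via $\phi_{s+t}^B=\phi_s^B\phi_t^B$, which holds on $[0,B]$ because the flow is a sup over a distance ball in $[0,B]$. It is cleaner, however, to avoid the semigroup property and use only the first-mark decomposition.

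\emph{Uniqueness.} Let $U_1,U_2$ be bounded Borel solutions and set $D=U_1-U_2$. Define $\delta(t)=\sup_{h\in\cX_B}|D(t,h)|$, which is finite by the uniform boundedness assumption; this is exactly where the single global bound is needed. Subtracting the two copies of \eqref{eq:mild} removes the $f$ term, and the remaining integral gives
\[
 \delta(t)\le\int_0^t e^{-\lambda_Bs}\lambda_B\,\delta(t-s)\,ds\le\lambda_B\int_0^t\delta(u)\,du .
\]
There is a technical wrinkle here: $\delta$ need not be measurable in $t$. I would handle this by working with $\bar\delta(t)=\sup_{u\le t}\delta(u)$, which is monotone, or equivalently by using the upper integral. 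Iterating the inequality $k$ times gives
\[
 \delta(t)\le \bigl(\sup\delta\bigr)\,\frac{(\lambda_Bt)^k}{k!}\longrightarrow0 ,
\]
so $D\equiv0$ on $[0,T]\times\cX_B$. Equivalently, the map defined by the right side of \eqref{eq:mild} is a contraction on bounded functions in the norm $\sup_t e^{-2\lambda_Bt}\sup_h|\cdot|$.

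\emph{Main obstacle.} The only genuinely delicate points are the Borel measurability of the flow and nucleation maps on $\cX_B$ in the hypograph topology, together with the measurability-in-$t$ issue in the Gronwall step. For the former, I would check continuity of $\phi^B_s$ in $(s,h)$ under the hypograph Hausdorff distance, which should be Lipschitz in $s$, and the Borel property of $C_z$ through its piecewise description. Everything else is routine, since $\lambda_B=2B+c<\infty$.
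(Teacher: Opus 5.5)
Your proof is correct and follows the same route as the paper: the first-mark decomposition gives existence, and a contraction estimate on the difference of two bounded solutions gives uniqueness. The only difference is minor. The paper keeps the factor $e^{-\lambda_B s}$ and bounds directly by $\|D\|_T=\sup_{t\le T,\,h}|D(t,h)|$, obtaining $\|D\|_T\le(1-e^{-\lambda_BT})\|D\|_T$ in one step; this avoids the Gronwall iteration and the measurability-in-$t$ issue you raise.
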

\begin{proof}
Let $S_1$ and $Z_1$ be the first nucleation time and location. Their law is
\[
 \mathbb P(S_1>t)=e^{-\lambda_Bt},\qquad
 \mathbb P(S_1\in ds,Z_1\in dz)=e^{-\lambda_Bs}\,ds\,\mu_B(dz).
\]
Conditional on this time and location, the remaining marks give the same
process started from
$C_z\phi_s^B h$. This proves \eqref{eq:mild}. The expectation is
bounded by $\|f\|_\infty$ and Borel, since the graphical construction
is a finite composition of Borel maps conditional on finitely many
marks.

For uniqueness, let $D$ be the difference of two bounded solutions
with the same $f$, and put
$\|D\|_T=\sup_{0\le t\le T,\,h\in\cX_B}|D(t,h)|$.
Subtracting their equations gives
\[
 |D(t,h)|\le\|D\|_T\int_0^t\lambda_Be^{-\lambda_Bs}\,ds
            =(1-e^{-\lambda_Bt})\|D\|_T.
\]
Hence $\|D\|_T\le(1-e^{-\lambda_BT})\|D\|_T$, so $D=0$.
\end{proof}

For the rest of the direct verification, $0\le c<1$. Set
\begin{equation}\label{eq:weak-candidate}
 G(t,h)=\Fhat_n(t,h),\qquad
 f(h)=f_{\boldsymbol x,\boldsymbol r}(h)
           =\prod_{j=1}^n\ind_{\{h(x_j)\le r_j\}}.
\end{equation}
For $h\in\cX_B$, evaluate the kernel using any locally finite extension
beyond $B$. All relevant intervals for $0\le t\le T+1$ lie strictly
inside $[0,B]$, so the extension has no effect. Proposition~\ref{thm:initial}
gives $G(0,h)=f(h)$, and Proposition~\ref{thm:uniform} gives
\begin{equation}\label{eq:global-candidate-bound}
 |G(t,h)|\le C\qquad(0\le t\le T+1,\ h\in\cX_B).
\end{equation}
In order to check the integral equation, we need to check the continuity of the Fredholm Pfaffian across the exceptional times. Keep the initial profile $h$
and the total time $t$ fixed, and write
\[
 \tau_s=t-s,\qquad g_s=\phi_s h,\qquad 0\le s<t.
\]
Assume \(s\) is an exceptional time, i.e. \((t-s,\phi_s h)\notin\mathcal R_{\boldsymbol x}\). Let \(z_1,\ldots,z_N\) be the finite number of discontinuity of \(h\) in $[0,B]$. We classify all the possible scenarios for exceptional times:
\begin{enumerate}
 \item
 \emph{A change in the profile coordinates:}
 steps collide and annihilate, or an up-step reaches the wall.
 At $s=0$, an initial isolated peak or excess wall height
 begins to spread.

 \item
 \emph{A step meets a hitting endpoint:}
 \[
  z_i-\sigma_i s=(t-s)+x_j
  \qquad\text{or}\qquad
  z_i-\sigma_i s=|(t-s)-x_j|.
 \]

 \item
 \emph{The kernel \(W_{p,q}\) changes:}
 \[
  (t-s)=x_j,\qquad
  2(t-s)=|x_i-x_j|\ (i\ne j),\qquad
  2(t-s)=x_i+x_j.
 \]
 These are the zero signed endpoints and the times at which
 a mixed hitting interval opens or closes.
\end{enumerate}

\subsection{Matching and right-time limits}\label{subsec:matching}
To integrate the regular backward equation, we need its solutions to
match at exceptional configurations.
\begin{proposition}
\label{thm:transmission}
Let $0\le c<1$, and let $h\in\cX_{\mathrm{fin}}$.

\smallskip\noindent
Fix $t>0$ with $(t,h)\in\cA_{\boldsymbol r}$, and consider
the one parameter family
\[
 s\longmapsto (t-s,\phi_s h),\qquad 0\le s<t.
\]
Let $s_0\in[0,t)$ be an isolated exceptional time of this
curve, meaning that \((t-s_0,\phi_{s_0} h)\notin\mathcal R_{\boldsymbol x}\)
\[
 (t-s,\phi_s h)\in\mathcal R_{\boldsymbol x}
 \quad\text{for all }s\ne s_0\text{ sufficiently close to }s_0.
\]
Then the candidate matches continuously across $s_0$:
\begin{equation}\label{eqn:shiftcontinuity}
       \lim_{s\to s_0}\Fhat_n(t-s,\phi_s h)
 =
 \Fhat_n(t-s_0,\phi_{s_0}h).
\end{equation}
At $s_0=0$, the limit is taken from the right. Furthermore, for \(h\in \cX_{\mathrm{fin}}, t>0\),
\begin{equation}\label{eqn:rightcontinuity}
       \lim_{\varepsilon\downarrow0}
 \Fhat_n(t+\varepsilon,h)=\Fhat_n(t,h),\qquad  \lim_{\varepsilon\downarrow0}
 \Fhat_n(\varepsilon,h)
 =
 \prod_{j=1}^n\ind_{\{h(x_j)\le r_j\}}.
\end{equation}
There is no assumption on the pair \((t,h)\).
\end{proposition}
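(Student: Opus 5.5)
Since the Fredholm Pfaffian is continuous in trace norm on the weighted space of Proposition~\ref{prop:trace-class}, the plan is to reduce both \eqref{eqn:shiftcontinuity} and \eqref{eqn:rightcontinuity} to trace-norm convergence of the conjugated kernel $A=\mathcal M(JK)\mathcal M$. We use the grouped form \eqref{eq:FG-terms}--\eqref{eq:positive-grouped}, in which every entry of $\widetilde K_{ij}$ is a finite sum of products of free factors $L_\pm$, $S_{\pm t,\pm x}$, $H$, $H_-^{-1}$ (analytic in $t$) and hitting kernels $P^{g}_{a,b}$ with endpoints $a,b\in\{0,|t-x_i|,t+x_j\}$. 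The first reduction is entrywise: for each exceptional type (1)--(3) we show that every hitting kernel in the grouped form converges pointwise along the curve. Then we upgrade to trace norm by dominated convergence, using the uniform tail bounds \eqref{eq:direct-tail} and Lemma~\ref{lem:tails} (entries bounded by $C_\lambda e^{-\lambda(u+v-2h_*)}$ after the $1_{r}$ projections, uniformly for $s$ near $s_0$). This is exactly as in the proofs of Proposition~\ref{thm:initial} and Proposition~\ref{thm:uniform}.

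For the pointwise step along $s\mapsto(t-s,\phi_sh)$ we treat the three types separately. For type (1), the key observation is that $P^{g}_{a,b}$ depends on $g$ only through its hypograph on $[a,b]$, and the walk is lower semicontinuous. Collisions and wall absorptions of $\phi_sh$ are continuous in the hypograph topology (this is the content of the matching identities). A hitting event for a random walk with finitely many jumps on $[a,b]$ is then a.s.\ continuous, since the walk does not jump exactly at the finitely many step locations. Hence $P^{\phi_sh}_{a,b}(u,v)\to P^{\phi_{s_0}h}_{a,b}(u,v)$. At $s_0=0$ an isolated peak or excess wall height vanishes after time $0$, so we only claim right continuity: the closed hypograph of $\phi_s h$ decreases to that of $h$ as $s\downarrow0$. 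For type (2), a step crossing a moving endpoint is handled the same way, because $P^{g}_{a,b}$ is jointly continuous in $(a,b)$ when the walk almost surely has no jump at the crossing point.

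Type (3) is where $W_{p,q}$ changes regime, and we expect it to be the main obstacle. At $2(t-s)=|x_i-x_j|$ or $x_i+x_j$, a mixed interval $[x_i-t,t\pm x_j]$ opens or closes. Then $P_{a,b}\to P_{a,a}=\overline1^{g(a)}$, not $0$, so $T_{x_i-t,t\pm x_j}$ has a jump. We plan to show that after sandwiching this jump is killed by triangularity, as in \eqref{eq:triangular-annihilation}: $1_{r_i}L_+\overline1^{g(a)}=0$ because $g(a)\le r_i$ on the allowed set. At $t-s=x_j$, the case $p=t-x_j$ passes from $T_p-T_p^*+T_p^*HT_q$ (with $p\downarrow0$) to $T_{-p,q}$ (with $-p\downarrow0$). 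Here we compare the grouped expressions $S_{-t,-x_i}G_{j,-}$ and $L_+P_{x_i-t,\cdot}L_-H_-^{-1}$. Both tend to kernels of the form $L_+P_{0,\cdot}L_-\cdots$ once $x_i-t\to0$, so the difference reduces to terms of the form $G^*_{i,-}(\cdots)$ with $P_{0,0}=\overline1^{g(0)}$. These again vanish after projection by $1_{r}L_-$ triangularity; this is also where the wall identity $(H_-^{-1})^*H=-I$ enters. If the allowed-set condition fails at $s_0$, both sides are $0$ by Theorem~\ref{thm:FV}, since the envelope condition is closed along the curve.

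Finally, we derive \eqref{eqn:rightcontinuity}. Right continuity in $t$ at fixed $h$ follows from the same arguments with $g=h$ frozen: only the endpoints move, so only types (2)--(3) occur, and we take one-sided limits. This also defines $\Fhat_n$ at $t=x_j$ consistently. The statement as $\varepsilon\downarrow0$ is Proposition~\ref{thm:initial}. Combining pointwise convergence with the uniform trace bounds and the continuity of the Pfaffian gives the result.
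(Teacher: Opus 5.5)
Your architecture matches the paper's: grouped form, entrywise limits, then dominated convergence in trace norm, with the singleton-versus-empty and zero-signed-endpoint jumps killed by $1_{r_i}L_+\overline{1}^{g(a)}=0$ and $\overline{1}^{m}L_-H_-^{-1}1_{r_i}=0$. However, your treatment of type (2) rests on a false claim.

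You say $P^g_{a,b}$ is jointly continuous in $(a,b)$ because the walk a.s.\ does not jump at the crossing point. The kernel pins the endpoint value $X(b)=v$, so the absence of a walk jump does not settle the matter. Take a fixed $g$ with an up step at $b_0$, i.e.\ $g(b_0^-)=m<M=g(b_0)$. Then $P^g_{a,b}\to P^g_{a,b_0^-}$ as $b\uparrow b_0$, and
\[
 P^g_{a,b_0}-P^g_{a,b_0^-}=\sum_{m<v\le M}Q^g_{a,b_0^-}(\cdot,v)\otimes\delta_v\neq0,
\]
since paths ending at $v\in(m,M]$ hit exactly at the endpoint. So you have not shown that a contact between a step and a hitting endpoint leaves the kernel unchanged. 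Your fixed-profile right limit relies on the same unjustified claim.

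\textbf{How the paper avoids this.} It never needs continuity of $P$ itself. From $Q^g_{a,b}=Q^g_{a,b^-}1_{g(b)}=1_{g(a)}Q^g_{a^+,b}$, every possible endpoint discrepancy is supported on heights $\le g(b)$ (resp.\ $\le g(a)$). Allowedness $g(b)\le r_j$, together with the triangularity of $L_-$, $L_-H_-^{-1}$ (resp.\ $L_+$), annihilates it after the outer factors and projections. For $W_{p,q}$ one expands
\[
 \delta W_{p,q}=(I+T_p^*H)\delta T_q+(\delta T_p)^*(HT_q-I)+(\delta T_p)^*H\delta T_q,
\]
so each term carries an annihilated factor.

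\textbf{How to rescue your route.} You need an observation you did not make. Along $s\mapsto(t-s,\phi_sh)$, right endpoints move left and left endpoints move right at unit speed. Same-velocity contacts persist over an interval, so a contact at an \emph{isolated} time can only be a down step crossing a right endpoint or an up step crossing a left endpoint. In those cases the endpoint sits on the high side, $g(b)=g(b^-)$ or $g(a)=g(a^+)$. The restricted hypographs then converge, and your pathwise argument goes through.

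\textbf{The fixed-profile right limit.} Here the correct reason is the direction of motion plus upper semicontinuity. The interval $[a-\varepsilon,b+\varepsilon]$ shrinks to $[a,b]$ from outside, and
\[
 Q^h_{a-\varepsilon,a}=Q^{(h(a^-))}_\varepsilon 1_{h(a)}\to1_{h(a)},\qquad Q^h_{b,b+\varepsilon}=1_{h(b)}Q^{(h(b^+))}_\varepsilon\to1_{h(b)},
\]
because $h(a^-)\le h(a)$ and $h(b^+)\le h(b)$. The corresponding left limit genuinely fails. This argument also yields the singleton value at collapsed intervals directly. That matters because the second statement assumes nothing about $(t,h)$. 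Your projection argument, which uses $g(a)\le r_i$, is unavailable there unless you first dispose of non-allowed pairs via Theorem~\ref{thm:FV} and the monotonicity of the envelope in $t$.

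A minor point: $(H_-^{-1})^*H=-I$ plays no role at the zero signed endpoint. That cancellation comes purely from triangular support.
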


\begin{proof}
We first prove \eqref{eqn:shiftcontinuity}. We discuss the three scenarios for the exceptional time.

\smallskip\noindent
\emph{1. Collisions, wall absorption, and initial peaks.}
For a constant part of height $q$ whose width $\delta$ tends to zero,
\[
 Q_\delta^{(q)}\longrightarrow1_q,\qquad
 1_pQ_\delta^{(q)}1_r
 \longrightarrow1_{\max(p,q,r)}\quad  \text{ in operator norm.}
\]

Consequently, when a valley disappears, the limiting no-hit
product kernel (the one in \eqref{eq:plateau-product}) is exactly the restrictions of the limiting
profile. When $s\downarrow0$ and an island shrinks back to an
initial isolated peak, its height restriction remains at that
point. At the wall, the analysis is the same.

\smallskip\noindent
\emph{2. Contacts with hitting endpoints.}
All kernels in this step are evaluated at the profile $g=\phi_s h$. For $0\le a<b$, let $Q_{a,b^-}^g$ and
$Q_{a^+,b}^g$ denote the no-hit kernels on $[a,b)$ and
$(a,b]$. $P_{a,b^-}^g$ and
$P_{a^+,b}^g$ are defined analogously.
 
Upper semicontinuity gives $g(b)\ge g(b^-)$ and $g(a)\ge g(a^+)$.
Since
\[
 Q_{a,b}^g=Q_{a,b^-}^g1_{g(b)}
         =1_{g(a)}Q_{a^+,b}^g,
\]
inserting an omitted endpoint restriction gives
\begin{equation}\label{eq:endpoint-captures}
 \begin{split}
 P_{a,b}^g-P_{a,b^-}^g
 &=\sum_{g(b^-)<v\le g(b)}
       Q_{a,b^-}^g(\cdot,v)\otimes\delta_v,\\
 P_{a,b}^g-P_{a^+,b}^g
 &=\sum_{g(a^+)<v\le g(a)}
       \delta_v\otimes Q_{a^+,b}^g(v,\cdot).
 \end{split}
\end{equation}
These are precisely the possible discrepancies when a
step meets a hitting endpoint: one can hit exactly at the endpoint. 

For a right endpoint belonging to column $j$,
the assumption that \(g\in \mathcal{A}_{\boldsymbol{r}}\) gives $g(b)\le r_j$. It is followed by the kernel
$L_-$ or $L_-H_-^{-1}$, so the column vanishes because:
\[
 \overline1^{g(b)}L_-1_{r_j}=0,\qquad
 \overline1^{g(b)}L_-H_-^{-1}1_{r_j}=0.
\]
The discussion for the left endpoint is the same.

The cancellation also holds for \(W_{p,q}\). Indeed, the exact difference of
$W_{p,q}=T_q-T_p^*+T_p^*HT_q$ is
\[
 \delta W_{p,q}
 =(I+T_p^*H)\delta T_q
 +(\delta T_p)^*(HT_q-I)
 +(\delta T_p)^*H\delta T_q.
\]
After putting the \(\mathcal{S}_j\) matrix in \eqref{eqn:Vdef} around it, the first term contains the
annihilated right factor, the second contains the annihilated left
factor, and the third contains both. 

Thus endpoint contacts will not change the limiting kernel.

\smallskip\noindent
\emph{3. Changes of kernel \(W_{p,q}\).}
First suppose a hitting interval collapses to a positive
point $a$. The closed-interval convention gives
\[
 P_{a,a}^g=\overline1^{g(a)},\qquad
 T_{a,a}
 =e^{a\Delta}\overline1^{g(a)}
      e^{-a\Delta}H_-^{-1},
\]
whereas an empty interval contributes zero. The term \(T_{a,a}\) always comes with the correct front factor so that it becomes
\[
 1_{r_i}L_+\overline1^{g(a)}e^{-a\Delta}H_-^{-1}=0.
\]
Hence the singleton and empty
formulas match after projection. This treats the exceptional
times $2\tau=|x_i-x_j|$ and $2\tau=x_i+x_j$ whenever the
collapsed point is positive.

It remains to match across a zero signed endpoint,
$\tau=x_i$. Put
\[
 m=g(0),\qquad T_0=\overline1^mH_-^{-1}.
\]
For $q>0$,
\begin{equation}\label{eqn:411}
       \begin{split}
 W_{0+,q}-W_{0-,q}
 =(P_{0,q}^g-P_{0^+,q}^g)e^{-q\Delta}H_-^{-1}+T_0^*(HT_q-I).
 \end{split}
\end{equation}
The first term is zero when there is no excess wall height.
Otherwise its left factor is supported on heights at most
$m$, by \eqref{eq:endpoint-captures}.

This change can only happen at the minus component of
observation $i$. Allowedness gives $m\le r_i$, and
$\tau=x_i$ gives
\[
 \begin{aligned}
 1_{r_i}S_{-\tau,-x_i}\overline1^m
 &=1_{r_i}L_+\overline1^m=0,\\
 T_0S_{\tau,-x_i}1_{r_i}
 &=\overline1^mL_-H_-^{-1}1_{r_i}=0.
 \end{aligned}
\]
The first identity kills the first term in \eqref{eqn:411}; the second kills the second term in \eqref{eqn:411}.

If the other argument is strictly negative, the mixed
interval is empty near the crossing. If both arguments
vanish, the additional expression is
\[
 T_0-T_0^*+T_0^*HT_0,
\]
and the same projections annihilate all three terms.
An approaching mixed interval is either empty or has limiting
restriction $\overline1^vH_-^{-1}$ with $v\le m$, which is
annihilated in the same way.

These calculations remain valid when wall absorption or the
opening of an initial wall peak occurs at the same time.
Step~2 identifies the upper wall value retained in the
finite-product limit; any omitted part of that restriction
is supported on heights at most $m$ and is removed by the
displayed projections. Thus the two sides agree with the
prescribed right-time value of the kernel at $\tau=x_i$.

\smallskip\noindent
\emph{4. Trace-norm matching.}
On either regular side of $s_0$, Steps~1--3 identify the limit
of every grouped kernel entry with its value at $(\tau,g)$.
If there are simultaneous exceptional cases, expand a difference of finite
products by changing one factor at a time. 

All the entrywise limits have a common summable bound, since the original profile has a finite ceiling on $[0,B]$, and deterministic evolution cannot exceed that ceiling on
any of the dependence intervals under consideration. So Proposition~\ref{prop:trace-class} supplies a uniform
bound for the grouped, projected, weighted entries. Dominated convergence and
\eqref{eq:trace-by-entries} yield
\[
 \left\|
 \mathcal M J
 \bigl[K(\tau_s,g_s)-K(\tau,g)\bigr]\mathcal M
 \right\|_1
 \longrightarrow0.
\]
This proves \eqref{eqn:shiftcontinuity}.

\smallskip\noindent
\emph{5. Right continuity at a fixed profile.}
Fix $t>0$ and keep $h$ fixed. We first establish the
limits of the hitting kernels as the time parameter decreases
from $t+\varepsilon$ to $t$.

For any fixed $a>0$, local finiteness of the profile implies
that $h$ is constant with value $h(a^-)$ immediately to the
left of $a$. Similarly, for $b\ge0$, it is constant with
value $h(b^+)$ immediately to the right of $b$. Hence, for
all sufficiently small $\varepsilon>0$,
\[
 Q_{a-\varepsilon,a}^h
 =Q_\varepsilon^{(h(a^-))}1_{h(a)},
 \qquad
 Q_{b,b+\varepsilon}^h
 =1_{h(b)}Q_\varepsilon^{(h(b^+))}.
\]
Upper semicontinuity gives
$h(a^-)\le h(a)$ and $h(b^+)\le h(b)$. Since \( \|Q_\varepsilon^{(m)}-1_m\|_{\mathrm{op}}
 \le e^{4\varepsilon}-1\), we obtain, in operator norm on $\ell^2(\mathbb Z)$,
\[
 Q_{a-\varepsilon,a}^h\to 1_{h(a)},
 \qquad
 Q_{b,b+\varepsilon}^h\to 1_{h(b)}.
\]
For $0<a\le b$, the constrained Markov property now gives
\[
 \begin{aligned}
 Q_{a-\varepsilon,b+\varepsilon}^h
 &=Q_{a-\varepsilon,a}^h
   Q_{a,b}^hQ_{b,b+\varepsilon}^h\to 
 1_{h(a)}Q_{a,b}^h1_{h(b)}
 =Q_{a,b}^h.
 \end{aligned}
\]
If the
left endpoint is zero, we have the similar convergence. Subtracting from the convergent free heat kernels yields \( P_{a-\varepsilon,b+\varepsilon}^h
 \to P_{a,b}^h,  P_{0,b+\varepsilon}^h
 \to P_{0,b}^h\).
These limits also cover collapsed intervals: \(a=b, b=0\) or \(a>b\).

Suppose first that $t\ne x_j$ for every $j$. For small
$\varepsilon$, the signs of \(W_{t+\epsilon\pm x_i,t+\epsilon\pm x_j}\) are fixed. The preceding calculations
therefore give its limit as the one in $K(t,h)$. If $t=x_j$, $t+\varepsilon-x_j>0$.
$P_{0,\varepsilon}^h\to\overline1^{h(0)}$, and all other
factors have the limits described above. A common upper bound for \(h\) on a
slightly enlarged dependence interval gives the same
summable bound. So using the proof of Proposition~\ref{prop:trace-class} and dominated convergence, we have
\[
 \lim_{\varepsilon\downarrow0}
 \Fhat_n(t+\varepsilon,h)=\Fhat_n(t,h).
\]
Here we do not assume \((t,h)\in \mathcal{R}_{\boldsymbol{x}}\).

Finally, the right-time limit at zero is
Proposition~\ref{thm:initial}. This proves \eqref{eqn:rightcontinuity}.

\end{proof}
\subsection{Verification of the integral Kolmogorov equation}\label{subsec:verification-mild}
We now integrate the regular backward equation, using
Proposition~\ref{thm:transmission} to join its values at exceptional
configurations. Fix $T,B$ as in \eqref{eq:weak-fixed-data}, let $f$
be the observation function \eqref{eq:observation-event}, and put
$G(t,h)=\Fhat_n(t,h)$ for $h\in\cX_B$.
To evaluate $G$, extend $h$ constantly by $h(B)$ beyond $B$.
For $0\le t\le T+1$, all hitting intervals lie inside $[0,B]$,
so the value is independent of this extension.

\begin{proposition}\label{thm:verification-mild}
For $0\le c<1$, the candidate $G$ is a bounded Borel solution of
the integral Kolmogorov equation~\eqref{eq:mild} on
$[0,T]\times\cX_B$, with initial value $f$.
\end{proposition}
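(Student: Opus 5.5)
The plan is to verify \eqref{eq:mild} pathwise along the deterministic characteristic $s\mapsto(t-s,\phi_s^Bh)$ by a Duhamel argument. Fix $(t,h)\in[0,T]\times\cX_B$ and set
\[
\Psi(s):=e^{-\lambda_Bs}G(t-s,\phi_s^Bh),\qquad 0\le s\le t .
\]
Since $\phi_t^Bh=\phi_th$ on the relevant dependence intervals, Proposition~\ref{thm:initial} together with \eqref{eqn:rightcontinuity} gives $\Psi(t^-)=e^{-\lambda_Bt}f(\phi_t^Bh)$. The equation \eqref{eq:mild} is therefore equivalent to
\[
\Psi(0)-\Psi(t^-)=\int_0^t e^{-\lambda_Bs}\int_{[0,B)}G(t-s,C_z\phi_s^Bh)\,\mu_B(dz)\,ds .
\]
Boundedness of $G$ comes from \eqref{eq:global-candidate-bound}. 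Borel measurability will follow because $G$ is a right-limit of functions that are continuous on each step-coordinate cell, and there are countably many such cells.

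\emph{Step 1: reduction to the allowed set.} If $(t,h)\notin\cA_{\boldsymbol r}$, then $(t-s,\phi_sh)\notin\cA_{\boldsymbol r}$ for every $s$, because the envelope of $\phi_sh$ at time $t-s$ equals that of $h$ at time $t$. The same holds for $C_z\phi_sh\ge\phi_sh$. Theorem~\ref{thm:FV} then makes both sides vanish. Now assume $(t,h)\in\cA_{\boldsymbol r}$. Along the curve, allowedness is preserved. After a nucleation the configuration may leave $\cA_{\boldsymbol r}$, and there $G=0$ by Theorem~\ref{thm:FV}, which is also the correct value of the true transition probability.

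\emph{Step 2: the regular pieces.} There are only finitely many exceptional times $0\le s_1<\dots<s_M<t$, namely the three scenarios listed before Proposition~\ref{thm:transmission}. This holds because $h$ has finitely many steps on $[0,B]$ and every event is a linear equation in $s$. On each open interval $(s_k,s_{k+1})$ the pair $(t-s,\phi_sh)$ lies in $\mathcal R_{\boldsymbol x}\cap\cA_{\boldsymbol r}$.

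Write the generator as $\cL_c=\cL_{\rm det}+\cL_{\rm nuc}$, where
\[
\cL_{\rm nuc}G(g)=\int\bigl(G(C_zg)-G(g)\bigr)\mu_B(dz).
\]
Then $\frac{d}{ds}G(t-s,\phi_sh)=-(\partial_t-\cL_{\rm det})G$. Theorem~\ref{thm:classical} gives $(\partial_t-\cL_c)G=0$, so
\[
\frac{d}{ds}\Psi(s)=-e^{-\lambda_Bs}\int G(t-s,C_z\phi_sh)\,\mu_B(dz).
\]
The $-\lambda_B\Psi$ term cancels against the $-G\,\mu_B([0,B))$ part of $\cL_{\rm nuc}$.

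Two checks are needed here. First, the nucleation integral in $\cL_c$ over $(0,\infty)$ coincides with the one over $(0,B)$: for $z>x_n+t$ the point $z$ lies outside every hitting interval, so the increment $\delta_z$ vanishes by Proposition~\ref{prop:rank-two}. Second, the derivative must be continuous on each open interval, so that the fundamental theorem of calculus applies. I would take this from Corollary~\ref{cor:everythingtrueintracenorm} and trace-norm continuity of the rank-two increments in $z$.

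\emph{Step 3: gluing, and the expected obstacle.} Integrate over each interval $(s_k,s_{k+1})$ and sum. The boundary values telescope because \eqref{eqn:shiftcontinuity} gives continuity of $\Psi$ at each $s_k$, and at $s=0$ continuity from the right. The integrand $G(t-s,C_z\phi_sh)$ is bounded, so the integrated right-hand side is unaffected by the finitely many exceptional times. The point I expect to be hardest is the isolatedness hypothesis of Proposition~\ref{thm:transmission} when several exceptional events coincide. I would handle this by splitting the curve finely enough that each $s_k$ is isolated, which is possible because there are finitely many $s_k$. A related subtlety is that at $s_k$ itself the value is the prescribed right-time limit, and this is consistent with \eqref{eqn:rightcontinuity}.

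The last point is the endpoint $s\to t$, that is, time $t-s\downarrow0$ for the moving profile $\phi_sh$ rather than a fixed profile. I would treat it as an exceptional time approached from the left. Choosing $t_0$ uniformly as in the proof of Proposition~\ref{thm:initial} makes the separated factorization of Lemma~\ref{lem:cluster} hold near $s=t$, and the limit of each factor then gives $f(\phi_th)$.
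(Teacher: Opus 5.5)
There is a genuine gap in Step 2. You claim the curve $s\mapsto(t-s,\phi_s^Bh)$ has only finitely many exceptional times because every exceptional event is a linear equation in $s$. That fails when the equation is degenerate.

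An up step ($\sigma_i=+1$) moves as $z_i-s$. So do the hitting endpoints $(t-s)+x_j$, and also $(t-s)-x_j$ while $t-s>x_j$. A down step moves as $z_i+s$, and so does the endpoint $x_j-(t-s)$ while $t-s<x_j$. Suppose $h$ has an up step at $t\pm x_j$, or a down step at $x_j-t$, i.e. a step on the boundary of the backward light cone of $(t,x_j)$. Then that step sits on a hitting endpoint for a whole interval of $s$. Condition (1) of Definition~\ref{def:regular} fails throughout that interval. Neither Theorem~\ref{thm:classical} nor the isolatedness hypothesis of Proposition~\ref{thm:transmission} is then available.

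Your remedy of splitting the curve finely only handles simultaneous but isolated events; it does nothing for a continuum of exceptional times. These configurations cannot be discarded, because the uniqueness argument in Proposition~\ref{prop:mild-unique} needs \eqref{eq:mild} at every $(t,h)\in[0,T]\times\cX_B$.

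The missing idea, used in the paper, is to perturb the total time. Run your Duhamel computation along $s\mapsto(t+\varepsilon-s,\phi_s^Bh)$ for $0\le s\le t$. The degenerate coincidences then become conditions on $\varepsilon$ alone: $z_i=t+\varepsilon\pm x_j$ or $z_i=x_j-t-\varepsilon$. Since there are finitely many steps (including those produced by the finitely many collisions), only finitely many $\varepsilon$ are excluded. For every other $\varepsilon$ the exceptional times are finite, and your argument yields \eqref{eq:shifted-integral-identity}. One then lets $\varepsilon\downarrow0$ using the fixed-profile right continuity \eqref{eqn:rightcontinuity}, with dominated convergence in $(s,z)$ justified by the uniform bound \eqref{eq:global-candidate-bound}.

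A side benefit is that the terminal value becomes $G(\varepsilon,\phi_t^Bh)$, evaluated at a fixed profile. The moving-profile limit $s\uparrow t$, which you planned to control by a uniform cluster factorization via Lemma~\ref{lem:cluster}, is therefore no longer needed. Your argument there looks workable, but the shift makes it unnecessary.
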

\begin{proof}

The finite-product formulas and convergent Fredholm series make $G$
jointly Borel in $(t,h)$.
Propositions~\ref{thm:initial}
and~\ref{thm:uniform} give
\begin{equation}
 G(0,h)=f(h),\qquad |G(t,h)|\le C
       \quad(0\le t\le T+1,\ h\in\cX_B).
\end{equation}
We first want to shift the trajectory so that it has finitely many exceptional times.
Fix $0<t\le T$, $h\in\cX_B$, and $0<\varepsilon<1$. Define
\[ 
 h_s=\phi_s^B h,\qquad \tau_s=t+\varepsilon-s,\qquad
 v_\varepsilon(s)=G(\tau_s,h_s),\qquad 0\le s\le t.
\]
 The
deterministic semigroup property and \eqref{eq:deterministic-envelope} give
\begin{equation}\label{eq:envelope-along-flow}
 r_0(\tau_s,x_j;\phi_s^B h)
 =r_0(t+\varepsilon,x_j;h).
\end{equation}
Hence all \((\tau_s,h_s)\) are allowed, or one fixed threshold is violated
throughout.

We choose $\varepsilon$ so that only finitely many times are
exceptional. Notice the condition that a step meets a hitting endpoint:
 \[
  z_i-\sigma_i s=(t-s)+x_j
  \qquad\text{or}\qquad
  z_i-\sigma_i s=|(t-s)-x_j|.
 \] 
Same sign on \(s\) will make \(s\) to be exception for a whole interval.
There are only finitely many such conditions on the total
time. Replacing $t$ by $t+\varepsilon$ and avoiding these intervals of exceptional time. The remaining exceptional times are finite in number.

Write $\cL_{\rm det}=-\sum_i\sigma_i\partial_{y_i}$.
On an interval where \(v_{\epsilon}(s)\) is differentiable, Theorem~\ref{thm:classical} gives
\[
 v_\varepsilon'(s)
 =-\partial_\tau G(\tau_s,h_s)+\cL_{\rm det}G(\tau_s,h_s)
 =(\cL_{\rm det}-\cL_c)G(\tau_s,h_s).
\]
Expanding the nucleation parts of the generator yields
\begin{equation}\label{eq:shifted-trajectory-ode}
 \begin{split}
 v_\varepsilon'(s)
 &=-\int_{[0,B)}[G(\tau_s,C_zh_s)-G(\tau_s,h_s)]\,\mu_B(dz)\\
 &=\lambda_Bv_\varepsilon(s)
       -\int_{[0,B)}G(\tau_s,C_zh_s)\,\mu_B(dz).
 \end{split}
\end{equation}
Here $\mu_B([0,B))=\lambda_B$, including the wall atom $c\delta_0$.
If \((\tau_s,h_s)\notin \mathcal{A}_{\boldsymbol{r}}\), then also \((\tau_s,C_zh_s)\notin \mathcal{A}_{\boldsymbol{r}}\). Theorem~\ref{thm:FV}
then gives $G(\tau_s,h_s)=G(\tau_s,C_zh_s)=0$, so
\eqref{eq:shifted-trajectory-ode} holds on this curve as well.

On an allowed curve, Proposition~\ref{thm:transmission} matches the
regular pieces with the actual values at their endpoints, including
$s=0,t$. Bound~\eqref{eq:global-candidate-bound} gives
$|v_\varepsilon'|\le2\lambda_BC$ on each piece. Integrating this
bound, extending to endpoints by matching, and adding over the finite
partition show that $v_\varepsilon$ is Lipschitz on $[0,t]$.
On a forbidden curve it is identically zero. In both cases it is
absolutely continuous and satisfies \eqref{eq:shifted-trajectory-ode}
almost everywhere.

Multiplying \eqref{eq:shifted-trajectory-ode} by $e^{-\lambda_Bs}$ and integrating over \([0,t]\), we get
\begin{equation}\label{eq:shifted-integral-identity}
 \begin{split}
 G(t+\varepsilon,h)=e^{-\lambda_Bt}G(\varepsilon,\phi_t^B h)
 &+\int_0^t e^{-\lambda_Bs}
      \int_{[0,B)}G(t+\varepsilon-s,C_z\phi_s^B h)\,\mu_B(dz)\,ds.
 \end{split}
\end{equation}
Sending $\varepsilon\downarrow0$, Proposition~\ref{thm:transmission} and dominated convergence give \eqref{eq:mild}.
\end{proof}

\subsection{Proof of the main theorem}\label{subsec:continuation}
Proposition~\ref{thm:verification-mild} identifies the Pfaffian with the
transition probability for $0\le c<1$. Analytic continuation then
extends the identity to all nonnegative boundary rates.
\begin{proof}[Proof of Theorem~\ref{thm:multipoint}]
Fix $t>0$, a finite-valued initial profile $h$, and \(\boldsymbol{x},\boldsymbol{r}\). Let $p_k$
be the probability of the height event with the usual bulk marks and
exactly $k$ independent uniform wall times in $[0,t]$, ordered in
time. These probabilities are independent of $c$ and lie in $[0,1]$.
Conditioning on the Poisson number of wall marks gives, for real $c\ge0$,
\[
 \mathbb P_h^c\bigl(h(t,x_j)\le r_j,\ 1\le j\le n\bigr)
       =e^{-ct}\sum_{k=0}^{\infty}\frac{(ct)^k}{k!}\,p_k.
\]
For complex $c$, the sum of absolute values is at most
$e^{t(|c|-\operatorname{Re}c)}$, uniformly on compact sets.
The right-hand side is therefore entire. 

The scalar Pfaffian is also entire in $c$. To see this,
fix $C_0<\infty$ and choose the weights in \eqref{eq:weights}
once for the disk $|c|\le C_0$. Write \(A(c):=\mathcal M J K(t,h;c)\mathcal M\).
The entries of $H_c$ and $H_-^{-1}$ are polynomials in $c$,
by \eqref{eq:H-kernel} and \eqref{eq:H-inverse-entries}, and the rest parts of kernel are independent of \(c\). The estimates in the proof of
Proposition~\ref{prop:trace-class} apply equally to complex
$c$ with $|c|\le C_0$, so every
entry of $A(c)$ is entire and the trace norm is uniformly bounded in \(c\). Thus, $\Pf(J+A(c))$ is entire. 

By Proposition~\ref{thm:verification-mild}, the two entire
functions agree for $0\le c<1$, so it gives \eqref{eq:multi-result} for every $c\ge0$ and $t>0$.

We now extend the identity to every $h\in\cX$, allowing
the value $-\infty$. Fix $c\ge0$ and put
\[
 h^{(M)}:=h\vee(-M),\qquad M\in\mathbb N.
\]
These profiles are finite-valued and decrease to $h$.
Couple their evolutions with that from $h$ using the same
graphical marks, with the convention $-\infty+1=-\infty$.
Choose $B>x_n+t+1$, and let $N_B$ be the total number of
bulk and wall marks in $[0,t]\times[0,B]$. So,
\[
 h(t,x_j;h)
 \le h(t,x_j;h^{(M)})
 \le \max\{h(t,x_j;h),-M+N_B\}.
\]
Since $N_B<\infty$ almost surely, the height-event
indicators for $h^{(M)}$ and $h$ agree for all sufficiently
large $M$. Their probabilities therefore converge.

On the kernel side, for each $0\le a\le b\le B$ and
$u,v\in\mathbb Z$,
\[
 P_{a,b}^{h^{(M)}}(u,v)\downarrow P_{a,b}^h(u,v).
\]
 All the truncated profiles
have the common finite ceiling.
The same estimates in the proof of
Proposition~\ref{prop:trace-class} provide a common bound for the weighted kernel entries.
Dominated convergence and \eqref{eq:trace-by-entries}
therefore give
\[
 \left\|
 \mathcal M J
 \bigl(K(t,h^{(M)})-K(t,h)\bigr)\mathcal M
 \right\|_1\longrightarrow0.
\]

Finally, we extend it to \(t=0\). fix a real $c\ge0$ and take $B>x_n+1$. For $0<t<1$,
finite propagation and the event that there are no
nucleations in the dependence box give
\[
 \left|
 \mathbb P_h^c\bigl(h(t,x_j)\le r_j,\ 1\le j\le n\bigr)
 -f(\phi_t h)
 \right|
 \le 1-e^{-(2B+c)t}.
\]
Upper semicontinuity and the discreteness of the heights
imply that $f(\phi_t h)=f(h)$ for all sufficiently small $t$.
The transition probability therefore tends to $f(h)$ as
$t\downarrow0$. Taking this limit in the positive-time
identity proves that the prescribed right-time limiting
Pfaffian at $t=0$ equals $f(h)$, completing the proof.

\end{proof}

\appendix
\section{Analytic properties of the Fredholm Pfaffian}\label{sec:analytic}
In the Appendix, we will prove proposition \eqref{prop:trace-class}. We will use the following bound:
\begin{equation}\label{eq:trace-by-entries}
 \|A\|_1\le\sum_{i,j=1}^n\sum_{\alpha,\beta\in\{-,+\}}
                    \sum_{u,v\in\mathbb Z}|A_{ij}^{\alpha\beta}(u,v)|.
\end{equation}

Fix a time bound $T$, a spatial bound $B$, and assume $|c|\le C_0$. Let $h_* = \max\{h(y):y\in[0,B]\}$. Constants below may depend on
these bounds, but not on the number or
positions of the steps. Write $r_+=\max(r,0)$ for the positive part.

\begin{lemma}\label{lem:tails}
For every $\lambda>0$ and $T,B<\infty$, there is a finite constant
such that each kernel
\[
 D\in\{S_{t,x},\ e^{x\Delta},\ L_+(t),\ L_-(t)\},
 \qquad |t|\le T,\quad |x|\le B,
\]
satisfies
\begin{equation}\label{eq:free-tail}
 \sup_u\sum_v|D(u,v)|e^{\lambda|u-v|}
 +\sup_v\sum_u|D(u,v)|e^{\lambda|u-v|}<\infty,
\end{equation}
uniformly over \(t,x\). The factors are defined in
\eqref{eq:free} and \eqref{eq:Lpm}.

For $0\le a\le b\le B$,
\begin{align}
 P_{a,b}^h(u,v)&\le C_{\lambda,B}
 e^{-\lambda((u-h_*)_++(v-h_*)_++|u-v|)},\label{eq:hit-tail}\\
 \sum_{w\ge v}P_{a,b}^h(u,w)&\le C_{\lambda,B}
 e^{-\lambda((u-h_*)_++(v-h_*)_++(v-u)_+)},
 \label{eq:hit-cumulative}
\end{align}
with $C_{\lambda,B}=\exp\{2B(e^{2\lambda}-1)\}$.
The bounds include $a=b$ under the closed-interval convention.
\end{lemma}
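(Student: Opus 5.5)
The plan is to treat the free kernels and the hitting kernels separately, in both cases by exponential tilting, i.e.\ conjugating by $M_{\pm\lambda}$.

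\emph{Free kernels.} Each $D$ in the list is $e^{\alpha\tau_++\beta\tau_-+dI}$ with $|\alpha|,|\beta|,|d|$ bounded by a constant times $T+B$. For example, $S_{t,x}=e^{-2x}e^{(t+x)\tau_++(x-t)\tau_-}$ and $L_\pm=e^{2t}e^{-2t\tau_\pm}$. Entrywise,
\[
|D(u,v)|\le e^{|d|}\,e^{|\alpha|\tau_++|\beta|\tau_-}(u,v),
\]
since expanding the exponential as a series only makes the terms larger when the coefficients are replaced by their absolute values. Multiplying by $e^{\lambda|u-v|}\le e^{\lambda(u-v)}+e^{\lambda(v-u)}$ and summing over $v$ gives, for the two tilted operators,
\[
e^{|d|}\exp\{(|\alpha|+|\beta|)e^{\pm\lambda}\}.
\]
This is because tilting multiplies the coefficient of $\tau_\pm$ by $e^{\pm\lambda}$ or $e^{\mp\lambda}$, and the row sum of $e^{a\tau_++b\tau_-}$ with $a,b\ge0$ equals $e^{a+b}$. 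The bound is uniform in $u$ and in $|t|\le T$, $|x|\le B$. Column sums are handled by the same computation.

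\emph{Hitting kernels.} I use two trivial bounds. First, $P_{a,b}^h(u,v)\le e^{(b-a)\Delta}(u,v)$. Second, hitting requires the walk to visit some level $\le h_*$ during $[a,b]$. I bound $P(u,v)$ by the $|u-v|$ factor and the $(u-h_*)_+$, $(v-h_*)_+$ factors simultaneously. If $u,v>h_*$, a hitting path must travel from $u$ down to $h_*$ and back up to $v$, so it makes at least $(u-h_*)+(v-h_*)\ge |u-v|$ net downward-plus-upward displacement. Thus its total number of jumps $J$ satisfies
\[
J\ge (u-h_*)_++(v-h_*)_+ \quad\text{and}\quad J\ge |u-v|,
\]
which is automatic in the other cases. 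Hence
\[
J\ge \tfrac12\bigl[(u-h_*)_++(v-h_*)_++|u-v|\bigr].
\]
On the other hand $J$ is Poisson with mean $2(b-a)\le 2B$, so
\[
P(u,v)\le \mathbb E\bigl[e^{\mu J}\bigr]\,e^{-\mu\cdot(\text{required jumps})}=\exp\{2B(e^{\mu}-1)\}\,e^{-\mu(\ldots)}.
\]
Taking $\mu=2\lambda$ gives exactly $C_{\lambda,B}=\exp\{2B(e^{2\lambda}-1)\}$ together with the exponent in \eqref{eq:hit-tail}.

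For \eqref{eq:hit-cumulative}, I sum the event over all $w\ge v$ rather than summing the bound. The event "hit level $\le h_*$ and end at some $w\ge v$" forces
\[
J\ge (u-h_*)_++(v-h_*)_+ \quad\text{and}\quad J\ge (v-u)_+,
\]
so the same Chernoff argument applies directly without an extra summation constant. The closed singleton case $a=b$ gives $P=\overline1^{h(a)}$, which is supported on $u=v\le h_*$, and the bound holds trivially there since $C\ge1$.

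\emph{Main obstacle.} The key point is the bookkeeping of the required number of jumps, checking it in each sign case of $u-h_*$ and $v-h_*$. Beyond that, I need the conventions on lower semicontinuity and hitting at jump times not to break the inequality. This holds because hitting at time $y$ still requires the path value $X(y)\le h(y)\le h_*$.
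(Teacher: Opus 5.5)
Your proposal is correct and follows the paper's proof essentially step for step. For the free kernels you use the exponential series in $\tau_\pm$; your entrywise domination plus tilting is the same computation as the paper's factorization into single-shift exponentials. For the hitting kernels you use the same Poisson jump count, $N\ge\max\{(u-h_*)_++(v-h_*)_+,\,|u-v|\}$ (with $(v-u)_+$ for the cumulative version), and a Chernoff bound at parameter $2\lambda$, which gives exactly $C_{\lambda,B}=\exp\{2B(e^{2\lambda}-1)\}$.

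The only slip is cosmetic: the tilted row sum is $e^{|d|}\exp\{|\alpha|e^{\mp\lambda}+|\beta|e^{\pm\lambda}\}$, not $e^{|d|}\exp\{(|\alpha|+|\beta|)e^{\pm\lambda}\}$. It is still bounded by $e^{|d|+(|\alpha|+|\beta|)e^{\lambda}}$, so the conclusion stands.
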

\begin{proof}
      For \(a\in \mathbb{R}\), \(e^{a\tau_{\pm}}(u,u\pm k) = \frac{a^k}{k!}, k>0\). Thus,
      \[\sum_{v}|e^{a\tau_{\pm}}(u,v)|e^{\lambda|u-v|}= \exp\{|a|e^{\lambda}\}.\]
We write \(S_{t,x}, e^{x\Delta}, L_{\pm}\) in terms of \(\tau_{\pm}\):
\[
 \begin{split}
 S_{t,x}=e^{-2x}e^{(x+t)\tau_+}e^{(x-t)\tau_-},\qquad e^{x\Delta}&=e^{-2x}e^{x\tau_+}e^{x\tau_-},\qquad
 L_\pm(t)=e^{2t}e^{-2t\tau_\pm}.
 \end{split}
\]
Thus, \(\sup_u\sum_v|D(u,v)|e^{\lambda|u-v|}\) is bounded respectively by
\[
 e^{2|x|+(|x+t|+|x-t|)e^\lambda},\qquad
 e^{2|x|+2|x|e^\lambda},\qquad
 e^{2|t|+2|t|e^\lambda}.
\]
Translation invariance gives the bounds on \(\sup_v\sum_u|D(u,v)|e^{\lambda|u-v|}\).

Let $N$ be the number of walk jumps on $[a,b]$. It is Poisson with
mean $2(b-a)\le2B$. A path from $u$ to $v$ that hits a profile
bounded above by $h_*$ must satisfy
\[
 N\ge(u-h_*)_++(v-h_*)_+,\qquad N\ge|u-v|.
\]
Let \(E = (u-h_*)_++(v-h_*)_++|u-v|\). Exponential Markov's inequality gives
\[P^{h}_{a,b}(u,v)\leq e^{-\lambda  E}\mathbb{E}(e^{2\lambda N})\leq e^{-\lambda E}\exp\{2B(e^{2\lambda}-1)\}.\]
For a terminal height at least $v$, replace the second lower
bound by $N\ge(v-u)_+$, and the same arguments hold.
\end{proof}

\paragraph{The growth of the boundary factor.}
The explicit kernel \eqref{eq:H-kernel} gives, for $0\leq c\le C_0$,
\begin{equation}\label{eq:H-growth}
 |H_c(u,v)|\le\tfrac12 e^{\omega|u-v|}.
\end{equation}
By \eqref{eq:free-tail}, the sums defining $H_cS_{t,x}$ and
$e^{x\Delta}H_ce^{y\Delta}$ converge absolutely and locally uniformly
in $c$. They satisfy
\begin{equation}\label{eq:H-free-growth}
 |(H_cS_{t,x})(u,v)|+
 |(e^{x\Delta}H_ce^{y\Delta})(u,v)|
 \le C e^{\omega|u-v|}
\end{equation}
for bounded $t,x,y$.

The one-sided kernel $H_-^{-1}$ has the explicit kernel
\begin{equation}\label{eq:H-inverse-entries}
 H_-^{-1}(u,v)=
 \begin{cases}
 0,&u<v,\\
 -2c,&u=v,\\
 2(1+c^2),&u-v>0\text{ odd},\\
 -4c,&u-v>0\text{ even}.
 \end{cases}
\end{equation}
This follows by multiplying the finite-shift factor in \eqref{eq:A}
into its one-sided series. In particular these entries are bounded
uniformly on $0\leq c\le C_0$, and the kernel is lower triangular.
The same holds for $L_-H_-^{-1}$: its entries are bounded by the
absolute row sum of $L_-$ times the bound in
\eqref{eq:H-inverse-entries}.

\begin{proof}[Proof of Proposition~\ref{prop:trace-class}]
Recall the definition of \(G_{i,\pm}\) in \eqref{eq:G-exact}, \(L_{\pm}\) in \eqref{eq:Lpm}. We first establish the following two bounds: for every $\lambda>0$, uniformly on $|c|\le C_0$,
\begin{align}
 |G_{j,-}(z,v)|+|G_{j,+}(z,v)|&\le C_\lambda e^{-\lambda|z|-\lambda v},
 &&v>r_j,\label{eq:G-tail}\\
 |(L_+P_{a,b}^hL_-)(u,v)|+|(L_+P_{a,b}^hL_-H_-^{-1})(u,v)|
 &\le C_\lambda e^{-\lambda(u+v)},
 &&u>r_i,\ v>r_j,\label{eq:direct-tail}
\end{align}

Since \(L_-, L_-H^{-1}_-\) are uniformly bounded, we have \(|G(z,v)|\leq C \sum_{w\geq v}P_{0,s}^{h}(z,w)\). The exponent in
\eqref{eq:hit-cumulative} is at least $|z|+v-C'$:
for $z\ge0$, \((z-h_*)_++(v-h_*)_+>|z|+v-2h_*^+\); for $z<0$, $(v-z)_+\ge v+|z|$. 

For \eqref{eq:direct-tail}, the support and boundedness of the outer factors imply that it is bounded by $C\sum_{z\ge u,w\ge v}P_{a,b}^h(z,w)$.
Apply \eqref{eq:hit-tail} with exponent $2\lambda$,
\[\sum_{z\geq u,w\geq v}P_{a,b}^{h}(z,w)\leq C_\lambda e^{2\lambda h_*}(\sum_{z\geq u}e^{-\lambda z})(\sum_{w\geq v}e^{-\lambda w})\leq C_{\lambda,h_*} e^{-\lambda(u+v)}.\]

Now we bound the kernels in \eqref{eq:FG-terms} and \eqref{eq:positive-grouped}. There are three types of terms in \eqref{eq:FG-terms}: \(S G, (HS)^*G\) and \(G^*HG\). Choose $\lambda>\omega$, \eqref{eq:G-tail} gives
\[
 \begin{split}
      \sum_{z,w}|G_{i,\alpha}(z,u)H_c(z,w)G_{j,\beta}(w,v)|&\le C_\lambda e^{-\lambda(u+v)}\sum_{z,w}e^{-\lambda|z|}e^{\omega |z-w|}e^{-\lambda |w|}\\
 &\le C_\lambda e^{-\lambda(u+v)}
       \left(\sum_z e^{-(\lambda-\omega)|z|}\right)^2.
 \end{split}
\]
Using \eqref{eq:G-tail} and  \eqref{eq:free-tail},
\[|(S_{-t,-x_i}G_{j,\beta})(u,v)|\leq C_\lambda e^{-\lambda v}\sum_{z}|S(u,z)|e^{-\lambda|z|}\leq C_\lambda e^{-\lambda|u|-\lambda v}\le  C_\lambda e^{-\lambda(u+v)}.\]
Using \eqref{eq:G-tail} and \eqref{eq:H-free-growth},
\[ |((HS_{t,x_i})^*G_{j,\beta})(u,v)|
 \le C_\lambda e^{-\lambda v}
      \sum_z e^{\omega|z-u|-\lambda|z|}
 \le C_\lambda e^{\omega u-\lambda v}.\]
In the last inequality, $u>r_i$ permits replacing $|u|$ by $u$.
Taking adjoint handles the other terms. Hence, we have
\begin{equation}\label{eq:scat-tail}
 |(J\widetilde K)_{ij}(u,v)|\le C_\lambda
 \begin{pmatrix}
 e^{-\lambda(u+v)}&e^{-\lambda u+\omega v}\\
 e^{\omega u-\lambda v}&e^{\omega u-\lambda v}+e^{-\lambda u+\omega v}
 \end{pmatrix},\qquad u>r_i,\ v>r_j.
\end{equation}

Choose $\lambda>\eta_n>\omega$ in \eqref{eq:scat-tail}. After the conjugation, \(\mathcal{M}(J\tilde{K})\mathcal{M}\) has all the exponents negative. 

It remains to consider the terms in \(R\). The $(+,+)$ component of $\mathcal{M}JR\mathcal{M}$ is 
\[
 e^{-\eta_i u-\eta_j v}|(e^{x_i\Delta}H_ce^{x_j\Delta})(u,v)|\leq Ce^{-\eta_i u-\eta_j v+\omega |u-v|}.
\]
Since \(u>r_i,v>r_j\), it is summable. For the free heat term with $i<j$, write
$q_d=e^{(x_j-x_i)\Delta}(0,d)$. Then
\begin{equation}\label{eq:free-heat-sum}
\begin{aligned}
 \sum_{u>r_i,v>r_j}e^{\eta_i u-\eta_jv}|q_{v-u}|
 &=\sum_d|q_d|e^{-\eta_jd}
     \sum_{u>\max(r_i,r_j-d)}e^{-(\eta_j-\eta_i)u}\\
 &\le C\sum_d|q_d|e^{\eta_j|d|}<\infty.
\end{aligned}
\end{equation}
This is where the strict ordering of
$\eta_i$ is used.

The bound~\eqref{eq:trace-by-entries} now gives trace class with the
stated uniformity.
\end{proof}
In the following corollary, we establish that all the derivative, integral and limit hold in the trace norm.
\begin{corollary}\label{cor:everythingtrueintracenorm}
For \((t,h)\in \mathcal{R}_{\boldsymbol{x}}\), let \(A(t,h):=\mathcal{M}JK(t,h)\mathcal{M}\).
\begin{enumerate}
      \item \(A(t,h)\) is differentiable in the trace norm.
      \item \(A(t,C_zh)-A(t,h)\) is integrable in trace norm over interval \([0,\infty)\).
\end{enumerate}

\end{corollary}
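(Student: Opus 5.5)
The plan is to reduce both claims to entrywise statements about the grouped kernel entries in \eqref{eq:FG-terms} and \eqref{eq:positive-grouped}, and then upgrade them to trace norm. For that upgrade I will use \eqref{eq:trace-by-entries} together with dominated convergence, where the dominating functions come from the proof of Proposition~\ref{prop:trace-class}.

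\textbf{Part (1).} Fix $(t,h)\in\mathcal R_{\boldsymbol x}$. By Definition~\ref{def:regular}, a neighbourhood of $t$ keeps every $W_{p,q}$ in a fixed regime. On that neighbourhood $h$ is constant near each endpoint $|t-x_j|$, $t+x_j$ and near $0$, so the plateau factorization \eqref{eq:plateau-product} applies with the end plateaux of varying length.
\begin{itemize}
\item Proposition~\ref{prop:moving-endpoints} then gives the entrywise derivative of every $P_{a(t),b(t)}^h$ as a finite sum of terms $\Delta P$, $P\Delta$ and $\mathcal E_a$, $\mathcal E^b$. The outer factors $L_\pm$ and $S_{t,\pm x}$ are entire in $t$, with derivatives $2(I-\tau_\pm)L_\pm$ and $2\nabla S$.
\item The derivative of each weighted entry is therefore a finite combination of kernels of the same structural type as in \eqref{eq:FG-terms}, with one factor multiplied by a finite-range operator or replaced by a rank-one error. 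The error $\mathcal E^b$ has the form $Q(\cdot,h(b)+1)\otimes\delta_{h(b)}$, and its weighted entries are bounded by \eqref{eq:hit-tail}.
\item Lemma~\ref{lem:tails}, applied with $\lambda$ replaced by $\lambda+1$, shows that the bounds \eqref{eq:G-tail} and \eqref{eq:direct-tail} hold uniformly for $t'$ in a compact neighbourhood of $t$. They also hold for the differentiated kernels, since multiplication by $\tau_\pm$ only shifts indices by one.
\item Hence the difference quotients $(A(t+\epsilon)-A(t))/\epsilon$ are dominated entrywise, via the mean value theorem, by a summable function. They converge entrywise to the candidate derivative, and \eqref{eq:trace-by-entries} with dominated convergence gives trace-norm differentiability.
\end{itemize}

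\textbf{Part (2).} By Proposition~\ref{prop:rank-two}, $A(t,C_zh)-A(t,h)$ is $\mathcal M J$ applied to the sandwich of $u_p\otimes\beta_q-\beta_p\otimes u_q$ between $\mathcal S_i^*$, $\mathcal S_j$ and the projections, plus the adjoint-type terms.
\begin{itemize}
\item Its trace norm is bounded by the sum of products of the weighted $\ell^2$ norms of the two vectors, one from the left factor and one from the right factor.
\item The right factor $\beta_q\mathcal S_j1_{r_j}$ equals $\delta_{h(z)+1}Q_{z,q}^hL_-(H_-^{-1})1_{r_j}$. It is bounded by the row $h(z)+1$ of $e^{(q-z)\Delta}$ times bounded triangular factors, so its weighted norm is at most $C$, uniformly in $z\in[0,B]$.
\item The left factor involves $a_z=Q_{0,z}\delta_{h(z)+1}$, which has $\|a_z\|_1\le1$. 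It also involves $T_p^*Ha_z$, which is controlled by \eqref{eq:G-tail} and \eqref{eq:H-growth}.
\item Since $h\le h_*$ on $[0,B]$, these bounds are uniform in $z$. The support of the difference is $z\le \max_j(t+x_j)\le B$, because $\beta_q$ and $u_p$ vanish outside the hitting intervals.
\item So $z\mapsto\|A(t,C_zh)-A(t,h)\|_1$ is bounded and compactly supported. It is measurable because it is piecewise continuous in $z$ between the finitely many discontinuities of $h$, hence Bochner integrable over $[0,\infty)$.
\end{itemize}

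\textbf{Main obstacle.} I expect the hard part to be the uniform domination in part (1) for the mixed terms $G^*HG$ and $(HS)^*G$. There the weight $e^{\omega u}$ on the $+$ component must be absorbed, and the derivative of $G$ contains the rank-one piece $\mathcal D_b$ whose left vector $Q_{0,b}\delta_{h(b)+1}$ is not yet projected. I would handle this as in \eqref{eq:scat-tail}: use the cumulative bound \eqref{eq:hit-cumulative} for the left vector and \eqref{eq:H-inverse-entries} for the right vector, and choose $\lambda>\eta_n+1$.
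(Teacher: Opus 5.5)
\textbf{Gap in Part (1): the step-coordinate derivatives are missing.} You differentiate only in $t$ with $h$ held fixed, which is not enough for the way the corollary is used. In Theorem~\ref{thm:classical}, and along the curve $s\mapsto(\tau_s,\phi_s h)$ in Proposition~\ref{thm:verification-mild}, one applies $\partial_t-\cL_c$ to the Pfaffian via Lemma~\ref{lem:pfcalculus}. The generator $\cL_c$ contains the deterministic part $-\sum_i\sigma_i\partial_{y_i}$. So $A$ must be trace-norm differentiable jointly in $t$ and the step coordinates. The natural setting is a neighbourhood $\mathcal U$ of time and step positions on which the following are frozen: the plateau heights, the ordering of the steps, the position of the steps relative to the hitting endpoints, and the regimes in \eqref{eq:Wpq}. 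On such a $\mathcal U$ there is a common ceiling $H$ on $[0,B]$.

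Differentiating a step position inside $P^h_{a,b}$ produces the terms $M_j=Q_L(\delta_{m+1}\otimes\delta_m+\delta_n\otimes\delta_{n+1})Q_R$ of \eqref{eqn:deterministicshift}. These are neither shifts of $P$ nor the endpoint errors $\mathcal E_a,\mathcal E^b$, so your summable majorant does not cover them. The missing estimate is the one the paper supplies. A path contributing to $M_j(u,v)$ is forced through a height at most $H+1$ at the step. The jump-count argument of Lemma~\ref{lem:tails} therefore gives bounds of the form \eqref{eq:hit-tail}--\eqref{eq:hit-cumulative} for $|M_j|$, with enlarged ceiling, uniformly on $\mathcal U$. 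After that, your product rule, mean-value and dominated-convergence argument goes through unchanged. A minor point: $\mathcal E^b$ carries a $Q$-factor, so its bound comes from the free heat kernel together with $v=h(b)\le H$, not literally from \eqref{eq:hit-tail}.

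\textbf{Part (2) is correct but follows a different route from the paper.} The paper does not use the rank-two structure here. It observes that the estimates of Proposition~\ref{prop:trace-class} depend only on a ceiling. Hence all $C_zh$ with $z\le x_n+t$ share one summable entrywise majorant (ceiling $H+1$), and \eqref{eq:trace-by-entries} gives a uniform trace bound. Your factorization via Proposition~\ref{prop:rank-two} bounds $\|\delta_zA\|_1$ directly by products of weighted $\ell^2$ norms. This is sharper and matches the form later used in Lemma~\ref{lem:pfcalculus}. The cost is that every sandwiched vector must be checked individually:
\begin{itemize}
\item The claim that $T_p^*Ha_z$ is controlled by \eqref{eq:G-tail} holds only after the outer factor has turned $T_p^*$ into $G^*_{i,\pm}$ via \eqref{eq:G-exact}.
\item The $-$ component carries weight $e^{\eta_iu}$, so you need the decay of both $S_{-t,-x_i}a_z$ and $G^*_{i,-}Ha_z$.
\item For $p>0$ one has $u_p\ne0$ even when $z>p$. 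The support bound $z\le x_n+t$ therefore comes from the $\beta$ factor present in each term, not from $u_p$.
\end{itemize}
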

\begin{proof}
Fix the weights $\mathcal M$ and choose $B>x_n+t$. By regularity, there is a neighborhood $\mathcal U$ in the
time and step coordinates on which function heights on each constant interval,
the ordering of steps, hitting endpoints, and the regimes
in \eqref{eq:Wpq} remain unchanged. All profiles in this
neighborhood have a common finite ceiling $H$ on $[0,B]$.

\emph{Differentiability.}
Notice that the derivatives
are sums of translates of $P$ (terms from in \eqref{eq:endpoint-single}) and no-hit products forced
through neighboring heights at most $H+1$ (terms from \eqref{eqn:deterministicshift}). The jump-count argument of Lemma~\ref{lem:tails} therefore gives
\eqref{eq:hit-tail} and \eqref{eq:hit-cumulative} also for
the absolute values of these derivatives, with an enlarged
ceiling and constants, uniformly on $\mathcal U$.
Derivatives of \(S_{\pm t,\pm x}\) are finite sums of
translates and satisfy \eqref{eq:free-tail}; derivatives
of $L_\pm$ also retain their triangular support.

Apply the product rule to the grouped expressions
\eqref{eq:FG-terms}--\eqref{eq:positive-grouped}.
Using the same argument as in the proof of
Proposition~\ref{prop:trace-class}, we have
\[
 \sum_{i,j=1}^n\sum_{\alpha,\beta\in\{-,+\}}
 \sum_{u,v\in\mathbb Z}
 \sup_{\mathcal U}
 \bigl|D A_{ij}^{\alpha\beta}(u,v)\bigr|<\infty.
\]
The entrywise mean-value formula supplies the same summable
majorant for difference quotients. Dominated convergence,
together with \eqref{eq:trace-by-entries}, proves
differentiability in trace norm.

\emph{Integrability of nucleation increments.}
Put $\delta_zA=A(t,C_zh)-A(t,h)$.
Every hitting interval lies in $[0,x_n+t]$, so
$\delta_zA=0$ for $z>x_n+t$.
For $0\le z\le x_n+t$, the profiles $C_zh$ have the common
ceiling $H+1$. The estimates of
Proposition~\ref{prop:trace-class}, which do not depend on
the number or positions of the steps, consequently give
a common summable majorant for the entries of $\delta_zA$.

The finite-product representation \eqref{eq:plateau-product}, with the additional
restriction at $z$, shows that these entries are piecewise
continuous in $z$, with possible exceptions only at the
finitely many steps and hitting endpoints. We also have the bound
\[
 \int_0^\infty\|\delta_zA\|_1\,dz
 \le (x_n+t)\sup_{0\le z\le x_n+t}\|\delta_zA\|_1
 <\infty.
\]
Thus the increment is integrable in trace class.
\end{proof}
\begin{lemma}\label{lem:pfcalculus}
Let $A$ be a skew trace-class kernel on $\HH$. Let \(\Phi(A):=\Pf(J+A)\). Suppose $A=A(s)$ is differentiable in trace norm and \(I-JA(s)\) is invertible. Then
\begin{equation}\label{eq:pf-derivative}
 \frac{d}{ds}\Phi(A(s))
 =
 -\frac12\Phi(A(s))
 \tr\!\left((I-JA(s))^{-1}J A'(s)\right).
\end{equation}
If $B$ is a skew rank-two operator and  \(I-JA(s)\) is invertible,
then
\begin{equation}\label{eq:pf-affine}
 \Phi(A+B)-\Phi(A)
 =
 -\frac12\Phi(A)\tr\!\left((I-JA)^{-1}JB\right).
\end{equation}
If \(I-JA\) is not invertible, it is still true that
\[A'(s) =B \Rightarrow \frac{d}{ds}\Phi(A(s))= \Phi(A+B)-\Phi(A).\]
\end{lemma}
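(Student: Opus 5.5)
The plan is to derive all three statements from the Fredholm-determinant identity $\Phi(A)^2=\det(I-JA)$ and the multilinear structure of Pfaffians. Here $J^2=-I$ and $A$ is skew, so $J+A$ is skew and $\Pf(J+A)^2=\det(J+A)/\det J$ in the finite-dimensional case. Write $J+A=J(I-JA)$, using $J^{-1}=-J$; up to the normalization in \eqref{eq:pf-definition} this gives $\Phi(A)^2=\det(I-JA)$. I would first prove this on finite principal restrictions and then pass to trace-class operators. This works because both sides are continuous in trace norm: the Pfaffian series \eqref{eq:pf-definition} is dominated via Hadamard-type bounds $|\Pf(\cdot)|\le\prod(\text{row norms})^{1/2}$, and finite-rank truncations are dense.

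For \eqref{eq:pf-derivative}, I would differentiate $\det(I-JA(s))$ using Jacobi's formula in trace norm:
\[
\frac{d}{ds}\det(I-JA)=-\det(I-JA)\,\tr\bigl((I-JA)^{-1}JA'\bigr).
\]
Since $\Phi$ is a continuous square root that is differentiable along trace-norm differentiable paths, differentiating $\Phi^2=\det$ gives $2\Phi\Phi'=-\Phi^2\tr(\cdots)$. Invertibility of $I-JA$ makes $\Phi(A)\ne0$, so we can divide by $\Phi$ and obtain \eqref{eq:pf-derivative}. To see that $\Phi$ is differentiable directly, I would note that $\Phi$ is a power series in the entries, converging locally uniformly in trace norm, so differentiability is not circular.

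For \eqref{eq:pf-affine}, consider $g(\lambda)=\Phi(A+\lambda B)$. The key claim is that $g$ is affine in $\lambda$. For a finite skew matrix, the Pfaffian of $M+\lambda(u\otimes w-w\otimes u)$ is a polynomial whose degree in $\lambda$ is at most $1$. The reason is that in the perfect-matching expansion, each matching uses the rank-two skew perturbation at most once with a nonzero contribution: two uses give a Pfaffian minor of a rank-two skew matrix of size four, which vanishes. This is the same cancellation as \eqref{eq:quadratic-jump-zero}. The affine property passes to the trace-class limit. Then
\[
g(1)-g(0)=g'(0),
\]
and \eqref{eq:pf-derivative} applied with $A'=B$ gives \eqref{eq:pf-affine}.

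The final claim needs no invertibility hypothesis. The derivative of $\Phi$ in the direction $B$ is the derivative of the entire function $g$ at $0$, which is a linear functional in $B$ regardless of whether $\Phi(A)$ vanishes. Since $g$ is affine, this derivative equals $g(1)-g(0)=\Phi(A+B)-\Phi(A)$. By the chain rule for a Fréchet-differentiable $\Phi$ in trace norm, $\frac{d}{ds}\Phi(A(s))=D\Phi_{A(s)}[A'(s)]$, and this is the stated identity.

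I expect the main obstacle to be justifying the affine property rigorously in infinite dimensions. My first attempt would be to prove it for finite truncations and take trace-norm limits, since Pfaffians of truncations converge to the Fredholm Pfaffian. A second concern is the sign and normalization conventions relating $\Pf(J+A)$ and $\det(I-JA)$, which should be checked on the $2\times2$ case.
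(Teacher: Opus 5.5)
Your proposal is correct. For \eqref{eq:pf-derivative} it follows the paper exactly: $\Phi(A)^2=\det(I-JA)$, Jacobi's formula, and division by $\Phi\neq0$. For \eqref{eq:pf-affine} and the last claim you take a different route.

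The paper writes $B=u\otimes v-v\otimes u=UV$ with $U:\mathbb C^2\to\HH$ and $V:\HH\to\mathbb C^2$. It sets $R=(J+A)^{-1}=-(I-JA)^{-1}J$, which is skew, and uses $\det(I+RUV)=\det(I_2+VRU)$ to obtain $\Phi(A+B)^2=\Phi(A)^2(1+\alpha)^2$, where $\alpha=v^{\mathsf T}Ru=\tfrac12\tr(RB)$. The non-invertible case is delegated to the Fr\'echet-derivative lemma of \cite{MQR}.

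You instead show that $\lambda\mapsto\Phi(A+\lambda B)$ is affine for every $A$, first for finite Pfaffians and then via truncation. You then read off \eqref{eq:pf-affine} as $g(1)-g(0)=g'(0)$ using \eqref{eq:pf-derivative}.

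What your route buys:
\begin{itemize}
\item The affine property needs no invertibility, so it gives the third claim immediately: by the chain rule, $D\Phi_A[B]=g'(0)=g(1)-g(0)$.
\item It avoids the square-root sign that the paper passes over when going from $\Phi(A+B)^2=\Phi(A)^2(1+\alpha)^2$ to $\Phi(A+B)=\Phi(A)(1+\alpha)$. That step really needs analyticity of $\lambda\mapsto\Phi(A+\lambda B)$; mere continuity would allow a sign flip at a zero of $1+\lambda\alpha$.
\end{itemize}
What the paper's route buys: it is a direct operator identity on $\HH$. It needs no truncation and no trace-norm continuity of the Fredholm Pfaffian, and it gives $\alpha$ explicitly.

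Two small points on your write-up.
\begin{itemize}
\item In the matching expansion, the cancellation does not happen matching by matching. It appears only after grouping matchings by the set $I$ of indices paired through $B$, which produces $\pm\Pf(M_{I^c})\Pf(B_I)$, and $\Pf(B_I)=0$ for $|I|\ge4$. A quicker argument: $\Pf(M+\lambda B)$ is a polynomial in $\lambda$ whose square $\det(M+\lambda B)$ has degree at most $2$ because $\operatorname{rank}B\le2$, so it has degree at most $1$.
\item The Hadamard domination you invoke for continuity needs summable row norms. This holds for the paper's weighted kernels, which have summable entries, but not for an arbitrary trace-class operator (already a rank-one $f\otimes g$ with $f\notin\ell^1$ fails). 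For the general statement, quote the standard trace-norm continuity and analyticity of Fredholm Pfaffians instead; this also supplies the Fr\'echet differentiability your chain-rule step uses.
\end{itemize}
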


\begin{proof}
If $I-JA(s)$ is invertible, \eqref{eq:pf-derivative} follows from the fact that \(\Pf^2(J+K)=\det(I-JK)\) and 
\[
 \frac{d}{ds}\det(I-JA(s))
 =
 -\det(I-JA(s))
 \tr\!\left((I-JA(s))^{-1}J A'(s)\right).
\]

To prove \eqref{eq:pf-affine}, assume \(B\) has the form \( B:=u\otimes v-v\otimes u\). Define the maps $U:\mathbb C^2\to\HH$ and $V:\HH\to\mathbb C^2$
by
\[
 U(a,b)=au+bv,\qquad
 Vf=\begin{pmatrix}v^{\mathsf T}f\\-u^{\mathsf T}f\end{pmatrix}.
\]
Thus, $B=UV$.

Since $I-JA$ is invertible, let
\[
 R=(J+A)^{-1}=-(I-JA)^{-1}J.
\]
Since $J+A$ is skew, $R^{\mathsf T}=-R$. Consequently,
\[
 u^{\mathsf T}Ru=v^{\mathsf T}Rv=0,
 \qquad
 u^{\mathsf T}Rv=-v^{\mathsf T}Ru.
\]
Writing $\alpha=v^{\mathsf T}Ru$, the determinant-square
identity and the finite-rank determinant identity give
\[
 \begin{aligned}
 \Phi(A+B)^2
 &=\det(I-JA-JB)=\Phi(A)^2\det(I+RB)\\
 &=\Phi(A)^2\det(I+VRU)= \Phi(A)^2\det\begin{pmatrix}
  1+v^{\mathsf T}Ru & v^{\mathsf T}Rv\\
  -u^{\mathsf T}Ru & 1-u^{\mathsf T}Rv
 \end{pmatrix}\\
 &=
 \Phi(A)^2\det\begin{pmatrix}
  1+\alpha&0\\
  0&1+\alpha
 \end{pmatrix}
 =\Phi(A)^2(1+\alpha)^2.
 \end{aligned}
\]
So, \(\Phi(A+B)=\Phi(A)(1+\alpha)\). On the other hand, \(
 \tr(RB)
 =v^{\mathsf T}Ru-u^{\mathsf T}Rv
 =2\alpha\), so
\[
 \alpha
 =-\frac12\tr\!\left((I-JA)^{-1}JB\right).
\]
This proves \eqref{eq:pf-affine}.

When \(I-JA\) is not known to be invertible a prior, we use the Fr\'echet derivative; see \cite[Lemma 4.11]{MQR}.
\end{proof}

\bigskip
\begingroup\small
\noindent\textsc{Kailun Chen}\\
Department of Mathematics, National University of Singapore,\\
2 Science Drive 2, Singapore 117543\\
Email address: \href{mailto:kailunxsx@gmail.com}{\nolinkurl{kailunxsx@gmail.com}}\\
URL: \url{https://sites.google.com/view/kailunchenshomepage/home}

\medskip
\noindent\textsc{Xincheng Zhang}\\
Department of Mathematics, California Institute of Technology,\\
Pasadena, California, USA\\
Email address: \href{mailto:xczhang@caltech.edu}{\nolinkurl{xczhang@caltech.edu}}\\
URL: \url{https://xinchengzhang96.github.io/}
\par\endgroup

\begin{thebibliography}{99}
\raggedright

\bibitem{ABW21}
A.~Aggarwal, A.~Borodin, and M.~Wheeler.
Deformed polynuclear growth in $(1+1)$ dimensions.
\emph{Int. Math. Res. Not. IMRN} \textbf{2023}, no.~7,
5728--5780.

\bibitem{BR00}
J.~Baik and E.~M.~Rains.
Limiting distributions for a polynuclear growth model with external sources.
\emph{J. Stat. Phys.} \textbf{100} (2000), 523--541.

\bibitem{BFS08}
A.~Borodin, P.~L.~Ferrari, and T.~Sasamoto.
Large time asymptotics of growth models on space-like paths II: PNG and parallel TASEP.
\emph{Comm. Math. Phys.} \textbf{283} (2008), 417--449.

\bibitem{Fer04}
P.~L.~Ferrari.
Polynuclear growth on a flat substrate and edge scaling of GOE eigenvalues.
\emph{Comm. Math. Phys.} \textbf{252} (2004), 77--109.

\bibitem{IS04}
T.~Imamura and T.~Sasamoto.
Fluctuations of the one-dimensional polynuclear growth model with external sources.
\emph{Nuclear Phys. B} \textbf{699} (2004), 503--544.

\bibitem{Joh03}
K.~Johansson.
Discrete polynuclear growth and determinantal processes.
\emph{Comm. Math. Phys.} \textbf{242} (2003), 277--329.

\bibitem{JR21}
K.~Johansson and M.~Rahman.
Multitime distribution in discrete polynuclear growth.
\emph{Comm. Pure Appl. Math.} \textbf{74} (2021), 2561--2627.

\bibitem{KPZ86}
M.~Kardar, G.~Parisi, and Y.-C.~Zhang.
Dynamic scaling of growing interfaces.
\emph{Phys. Rev. Lett.} \textbf{56} (1986), 889--892.

\bibitem{MQR}
K.~Matetski, J.~Quastel, and D.~Remenik.
Polynuclear growth and the Toda lattice.
\emph{J. Eur. Math. Soc.} \textbf{28} (2026), no.~11,
4951--5021.

\bibitem{PS00}
M.~Pr\"ahofer and H.~Spohn.
Universal distributions for growth processes in $1+1$ dimensions and random matrices.
\emph{Phys. Rev. Lett.} \textbf{84} (2000), 4882--4885.

\bibitem{PS02}
M.~Pr\"ahofer and H.~Spohn.
Scale invariance of the PNG droplet and the Airy process.
\emph{J. Stat. Phys.} \textbf{108} (2002), 1071--1106.

\bibitem{PS04}
M.~Pr\"ahofer and H.~Spohn.
Exact scaling functions for one-dimensional stationary KPZ growth.
\emph{J. Stat. Phys.} \textbf{115} (2004), 255--279.

\bibitem{Rains}
E.~M.~Rains.
\emph{Correlation functions for symmetrized increasing subsequences}.
Preprint, arXiv:math/0006097, 2000.

\bibitem{SI04}
T.~Sasamoto and T.~Imamura.
Fluctuations of the one-dimensional polynuclear growth model in half-space.
\emph{J. Stat. Phys.} \textbf{115} (2004), 749--803.
arXiv:cond-mat/0307011v1.

\bibitem{Zhang24}
X.~Zhang.
TASEP in Half-space. 
Preprint, arxiv.org/abs/2409.09974.

\end{thebibliography}
\end{document}